\newcommand{\paperfont}{\fontsize{11pt}{1.2\baselineskip}\selectfont}
\begin{document}

\theoremstyle{definition}
\makeatletter
\thm@headfont{\bf}
\makeatother
\newtheorem{theorem}{Theorem}[section]
\newtheorem{definition}[theorem]{Definition}
\newtheorem{lemma}[theorem]{Lemma}
\newtheorem{proposition}[theorem]{Proposition}
\newtheorem{corollary}[theorem]{Corollary}
\newtheorem{remark}[theorem]{Remark}
\newtheorem{example}[theorem]{Example}
\newtheorem{assumption}[theorem]{Assumption}

\lhead{}
\rhead{}
\lfoot{}
\rfoot{}

\renewcommand{\refname}{References}
\renewcommand{\figurename}{Figure}
\renewcommand{\tablename}{Table}
\renewcommand{\proofname}{Proof}

\newcommand{\diag}{\mathrm{diag}}
\newcommand{\tr}{\mathrm{tr}}
\newcommand{\re}{\mathrm{Re}}
\newcommand{\one}{\mathbbm{1}}
\newcommand{\loc}{\textrm{loc}}

\newcommand{\Pnum}{\mathbb{P}}
\newcommand{\Enum}{\mathbb{E}}
\newcommand{\Rnum}{\mathbb{R}}
\newcommand{\Cnum}{\mathbb{C}}
\newcommand{\Znum}{\mathbb{Z}}
\newcommand{\Nnum}{\mathbb{N}}
\newcommand{\dnum}{\mathrm{d}}
\newcommand{\abs}[1]{\left\vert#1\right\vert}
\newcommand{\set}[1]{\left\{#1\right\}}
\newcommand{\norm}[1]{\left\Vert#1\right\Vert}
\newcommand{\innp}[1]{\langle {#1}\rangle}
\newcommand{\style}{\setlength{\itemsep}{1pt}\setlength{\parsep}{1pt}\setlength{\parskip}{1pt}}

\title{\textbf{Detailed balance, local detailed balance, and global potential for stochastic chemical reaction networks}}
\author{Chen Jia$^{1}$,\;\;\;Da-Quan Jiang$^{2,3,*}$,\;\;\;Youming Li$^{2}$ \\
\footnotesize $^1$Applied and Computational Mathematics Division, Beijing Computational Science Research Center, Beijing 100193, China \\
\footnotesize $^2$ LMAM, School of Mathematical Sciences, Peking University, Beijing 100871, China. \\
\footnotesize $^3$ Center for Statistical Science, Peking University, Beijing 100871, China. \\
\footnotesize $^{*}$ Correspondence: jiangdq@math.pku.edu.cn}
\date{}
\maketitle
\thispagestyle{empty}

\paperfont

\begin{abstract}	
Detailed balance of a chemical reaction network can be defined in several different ways. Here we investigate the relationship among four types of detailed balance conditions: deterministic, stochastic, local, and zero-order local detailed balance. We show that the four types of detailed balance are equivalent when different reactions lead to different species changes and are not equivalent when some different reactions lead to the same species change. Under the condition of local detailed balance, we further show that the system has a global potential defined over the whole space, which plays a central role in the large deviation theory and the Freidlin-Wentzell-type metastability theory of chemical reaction networks. Finally, we provide a new sufficient condition for stochastic detailed balance, which is applied to construct a class of high-dimensional chemical reaction networks that both satisfies stochastic detailed balance and displays multistability. \\

\noindent\textbf{Keywords}: chemical reaction systems; microscopic reversibility; Kolmogorov's cycle condition; quasi-potential; large deviation; metastability
\end{abstract}

\section{Introduction}
The mathematical theory of chemical reaction networks has attracted massive attention over the past two decades due to its wide applications in biology, chemistry, ecology, and epidemics \cite{anderson2015stochastic}. If a reaction system is well mixed and the numbers of molecules are very large, random fluctuations can be ignored and the evolution of the concentrations of all chemical species can be modeled deterministically as a set of ordinary differential equations (ODEs) based on the law of mass action. If the chemical species are presented in low numbers, however, random fluctuations can no longer be ignored and the evolution of the system is usually modeled stochastically as a continuous-time Markov chain on a high-dimensional lattice, which is widely known as a density-dependent Markov chain \cite{ethier2009markov}. The Kolmogorov forward equation for a density-dependent Markov chain is the well-known chemical master equation, which is first introduced by Delbr\"{u}ck \cite{delbruck1940statistical}. At the center of the mathematical theory of chemical reaction networks is a limit theorem proved by Kurtz \cite{kurtz1971limit, kurtz1972relationship, kurtz1978strong}, which states that when the system size tends to infinity, the trajectories of the stochastic model of a reaction system will converge to those of the deterministic model over any compact time interval, whenever the initial condition converges. Thus far, stochastic reaction networks have served as a fundamental model for the single-cell stochastic gene expression dynamics of gene regulatory networks \cite{peccoud1995markovian, shahrezaei2008analytical, hornos2005self, kumar2014exact, jia2018relaxation, jia2019single, jia2020small, jia2020kinetic}. Recently, the limit theorem of Kurtz has been generalized to stochastic gene regulatory networks with bursting dynamics \cite{jia2017emergent, chen2020limit}.

The limit theorem of Kurtz \cite{kurtz1972relationship} can be viewed as the law of large numbers for stochastic reaction networks. The corresponding large deviation theory and the Freidlin-Wentzell-type metastability theory for stochastic reaction networks have also been studied by many authors \cite{shwartz1995large, li2017large, agazzi2018geometry, agazzi2018seemingly, lazarescu2019large, anderson2020tier} and were rigorously established by Agazzi et al. under the mass action kinetics \cite{agazzi2018large}. At the center of the metastability theory is a quantity called quasi-potential, which plays a crucial role in the analysis of the exit time and exit distribution from a basin of attraction, as well as the most probable transition paths between multiple attractors when the system size is large \cite{olivieri2004large}. However, the quasi-potential is usually defined via an abstract variational expression and hence it is very difficult to obtain a general analytical expression for the quasi-potential.

There are two types of reaction networks that should be distinguished, those satisfy detailed balance and those violate detailed balance. In terms of physical chemistry, detailed balance is a fundamental thermodynamic constraint for closed systems. If there is no sustained energy supply, then a chemical system, when it reaches the steady state, must satisfy detailed balance \cite{qian2007phosphorylation}. In the modelling of many biochemical systems such as enzymes \cite{cornish2012fundamentals} and ion channels \cite{sakmann2009single}, detailed balance has become a basic requirement \cite{alberty2004principle, jia2016solution}. However, in the literature, there are two different definitions of detailed balance for a chemical reaction network. From the deterministic perspective, detailed balance means that there is no net \emph{concentration flux} between any pair of reversible reactions, in which case there is no chemical potential difference and thus the system is in chemical equilibrium. From the stochastic perspective, detailed balance means that there is no net \emph{probability flux} between any pair of microstates on the high-dimensional nonnegative integer lattice, where each microstate is defined as the ordered tuple of concentrations of all chemical species. To distinguish between them, we refer to the former as deterministic detailed balance and the latter as stochastic detailed balance. Some authors believed that the two types of detailed balance are equivalent \cite{anderson2010product}. However, Joshi \cite{joshi2015detailed} pointed out recently that they are not equivalent; they are equivalent when different reactions lead to different species changes, while they are in general not equivalent for systems having two reactions that lead to the same species change --- deterministic detailed balance implies stochastic detailed balance and the opposite is not true.

In this paper, in addition to deterministic and stochastic detailed balance, we propose a third type of detailed balance, which is called \emph{local detailed balance}. This new type of detailed balance characterizes the local asymptotic behavior of a reaction network as the system size tends to infinity. We prove that the three types of detailed balance (deterministic, stochastic, and local) are equivalent when different reactions lead to different species changes, while local detailed balance is even weaker than the other two when some different reactions lead to the same species change --- stochastic detailed balance implies local detailed balance and the opposite is not true. This is the first main result of the present paper. More importantly, under the condition of local detailed balance, we prove that a stochastic reaction network has a global potential that can be computed explicitly and concisely. The global potential reduces to the quasi-potential within each basin of attraction. In general, the quasi-potential is only defined within each basin of attraction. However, local detailed balance guarantees that the system has a global potential that can be defined over the whole space. This is the second main result of the present paper.

In \cite{joshi2015detailed}, Joshi gave the sufficient and necessary condition for deterministic detailed balance. While the author also provided a weaker sufficient condition for stochastic detailed balance, it is difficult to apply it in practice since an infinite number of restrictions need to be verified. In this paper, we provide a simpler sufficient condition for stochastic detailed balance that is more applicable in practice. This new sufficient condition is imposed directly on rate constants and only a finite number of restrictions need to be verified. This is the third main result of the present paper.

The structure of this paper is organized as follows. In Section 2, we recall the basic concepts of chemical reaction networks and state some preliminary results. In Section 3, we reveal the relationship among four types of detailed balance and give some counterexamples. In Section 4, we show that a global potential exists for stochastic reaction networks satisfying local detailed balance and obtain the explicit expression of the global potential. The remaining sections are devoted to the detailed proofs of the main theorems.

\section{Model and preliminary results}
Let $\mathbb{Z}_{\geq 0},\mathbb{R}_{\geq 0}$, and $\mathbb{R}_{>0}$ denote the sets of nonnegative integers, nonnegative real numbers, and positive real numbers, respectively. Recall that a chemical reaction system is composed of a collection of chemical species $\{S_1,\dots,S_d\}$ and a family of reactions
\begin{equation*}
R_i: \sum_{j=1}^d \nu^j_i S_j \xrightarrow{k_i} \sum_{j=1}^d \nu'^j_i S_j,\;\;\;1\leq i\leq N,
\end{equation*}
where $\nu^j_i,\nu'^j_i\in\mathbb{Z}_{\geq 0}$ are the molecule numbers of $S_i$ consumed and created in one instance of that reaction, respectively. For simplicity, we write $\nu_{i} = (\nu_i^1,\dots,\nu_{i}^d)$ and $\nu'_{i}=(\nu'^1_i,\dots,\nu'^d_{i})$, which are called \emph{complexes}. Then the reaction $R_i$ can be written more concisely as $\nu_i \rightarrow \nu'_i$. Moreover, $\nu'_i-\nu_i$ is called the {\em reaction vector} of $R_i$. Let
\begin{equation*}
\mathcal{S}=\{S_1,\dots,S_d\},\;\;\;
\mathcal{C}=\{\nu_i,\nu'_i \ | \ i=1,\dots,N\},\;\;\;
\mathcal{R}=\{R_i | \ i=1,\dots,N\}
\end{equation*}
denote the collections of chemical species, complexes, and reactions respectively. Then the ordered triple $\{\mathcal{S},\mathcal{C},\mathcal{R}\}$ is called a {\em chemical reaction network}.

A chemical reaction network is called {\em reversible} if for any reaction $R_i: \nu_i\rightarrow\nu'_i\in\mathcal{R}$, there exists a reverse reaction $R^-_i:\nu'_i\rightarrow \nu_i\in\mathcal{R}$ \cite{joshi2015detailed}. For any pair of reversible reactions $R_i$ and $R_i^-$, we say that $R_i$ is a {\em forward reaction} if $\nu_i<\nu'_i$, where the symbol ``$<$" is understood in the lexicographic order, namely $\nu_i<\nu'_i$ if and only if $\nu_i^j<\nu'^j_i$ for the first $j$ at which $\nu_i^j$ and $\nu'^j_i$ differ; otherwise, $R_i$ is called a {\em backward reaction}. Throughout the paper, we assume that \emph{all reaction networks under consideration are reversible}.

Most previous papers assumed that different reactions have different reaction vectors. However, in many reaction networks, multiple different reactions may have the same reaction vector. For example, the reaction $S_1\rightarrow S_2$ and the enzyme catalyzed reaction $S_1+E\rightarrow S_2+E$ may coexist in a biochemical reaction system with the latter having a larger reaction rate, where $E$ denotes an enzyme. To cover such systems, here we consider the more general case where each reaction vector may correspond to multiple different reactions. For convenience, we introduce the following definition, which plays an important role in the present paper.

\begin{definition}
	Two reactions are called {\em equivalent} if they have the same reaction vector.
\end{definition}

From this definition, two equivalent reactions are either both forward or both backward. Following the notation in \cite{joshi2015detailed}, let $V(\mathcal{R})=\{\nu'_i-\nu_i|R_i \ \text{is a forward reaction}\}$ denote the collection of reaction vectors for forward reactions. Throughout the paper, the elements in $V(\mathcal{R})$ will be listed as $\omega_1,\cdots,\omega_r$, where $1\leq r\leq N$. For any $\omega_p = (\omega_p^1,\cdots,\omega_p^d)\in V(\mathcal{R})$, we set
\begin{equation*}
\begin{aligned}
\mathcal{R}^+_p&=\{R_i|R_i \ \text{is a forward reaction}, \nu'_i-\nu_i=\omega_p\},\\
\mathcal{R}_p^-&=\{R_i^-|R_i \ \text{is a forward reaction}, \nu'_i-\nu_i=\omega_p\}.
\end{aligned}
\end{equation*}
Then we can relabel the elements in $\mathcal{R}^+_p$ and $\mathcal{R}^-_p$ as
\begin{equation*}
\begin{aligned}
R^+_{pl}:&\sum_{j=1}^d \nu^j_{pl}S_j\xrightarrow{k^+_{pl}} \sum_{j=1}^d \nu'^j_{pl}S_j,\;\;\;1\leq l\leq r_p,\\
R^-_{pl}:&\sum_{j=1}^d \nu'^j_{pl}S_j\xrightarrow{k^-_{pl}} \sum_{j=1}^d \nu^j_{pl}S_j,\;\;\;1\leq l\leq r_p,
\end{aligned}
\end{equation*}
where $r_p$ represents the number of forward reactions with the same reaction vector $\omega_p$ and we call it the \emph{multiplicity} of $\omega_p$. Here we mainly focus on the case of $r_p > 1$ for some $1\leq p\leq r$. For any $1\leq p \leq r$ and $1\leq l \leq r_p$, we set $\nu_{pl}=(\nu^1_{pl},\dots,\nu^d_{pl})$ and $\nu'_{pl}=(\nu'^1_{pl},\dots,\nu'^d_{pl})$.

We first recall the stochastic model of reaction networks. For each $1\leq j\leq d$, let $N_j(t)$ denote the number of molecules of the chemical species $S_j$ at time $t$. Then the concentration of $S_j$ at time $t$ is given by $X^V_j(t) = N_j(t)/V$, where $V$ is the system size. Let $X^V(t)=(X^V_1(t),\dots,X^V_d(t))$ denote the concentration process of all chemical species. At the mesoscopic level, the process $\{X^V(t):t\geq 0\}$ can be modeled by a continuous-time Markov chain on the $d$-dimensional nonnegative integer lattice
\begin{equation*}
E_V = \left\{\frac{n}{V}:n = (n_1,\dots,n_d)\in\mathbb{Z}_{\geq 0}^d\right\}
\end{equation*}
with transition rate matrix $Q^V = (q^V_{x,y})$ whose elements are defined as follows: for any $1\leq p\leq r$ and any $x\in E_V$,
\begin{equation*}
\begin{aligned}
&q^V_{x,x+\frac{\omega_p}{V}}=\sum_{l=1}^{r_p}\frac{k^+_{pl}}{V^{|\nu_{pl}|-1}} \frac{(Vx)!}{(Vx-\nu_{pl})!}, \\
&q^V_{x,x-\frac{\omega_p}{V}}=\sum_{l=1}^{r_p}\frac{k^-_{pl}}{V^{|\nu'_{pl}|-1}} \frac{(Vx)!}{(Vx-\nu'_{pl})!}, \\
&q^V_{x,x}=-\sum_{p=1}^r\left(q^V_{x,x+\frac{\omega_p}{V}}+q^V_{x,x-\frac{\omega_p}{V}}\right),
\end{aligned}
\end{equation*}
where $|\nu|=\sum_{j=1}^d\nu_j$ is called the \emph{order} of the complex $\nu = (\nu_1,\cdots,\nu_d)$ and we write $x!=\prod_{j=1}^d x_j!$ for each vector $x = (x_1,\cdots,x_d)\in\mathbb{Z}^d_{\geq 0}$. Let $\mathbb{P}^V_{x}(t)$ denote the probability of observing state $x\in E_V$ at time $t$. Then the evolution of the stochastic model is governed by the following chemical master equation:
\begin{equation*}
\begin{split}
\frac{{\rm d}\mathbb{P}^V_{x}(t)}{{\rm d}t} =&\;
\sum_{p=1}^rq^V_{x-\frac{\omega_p}{V},x}\mathbb{P}^V_{x-\frac{\omega_p}{V}}(t)+
\sum_{p=1}^rq^V_{x+\frac{\omega_p}{V},x}\mathbb{P}^V_{x+\frac{\omega_p}{V}}(t)\\
&\;-\sum_{p=1}^{r}\left(q^V_{x,x+\frac{\omega_p}{V}}+q^V_{x,x-\frac{\omega_p}{V}}\right)\mathbb{P}^V_{x}(t),\;\;\;
x\in E_V.
\end{split}
\end{equation*}

We next recall the deterministic model of reaction networks. For each $1\leq j\leq d$, let $x_j(t)$ denote the concentration of the chemical species $S_j$ at time $t$. At the macroscopic level, the concentration process $x(t)=(x_1(t),\dots,x_d(t))$ of all chemical species can be modeled by the following ordinary differential equation with mass action kinetics:
\begin{equation}\label{deterministic}
\left\{\begin{aligned}
\frac{{\rm d}x(t)}{{\rm d}t} & =\sum_{p=1}^{r}[f^+_p(x(t))-f^-_p(x(t))]\omega_p, \\
x(0) & =x_0,
\end{aligned}\right.
\end{equation}
where $x_0$ is the initial concentration vector and
\begin{equation}\label{definitionf}
f^+_p(x) = \sum_{l=1}^{r_p}k^+_{pl}x^{\nu_{pl}},\;\;\;
f^-_p(x) = \sum_{l=1}^{r_p}k^-_{pl}x^{\nu'_{pl}},\;\;\;x\in\mathbb{R}^d_{\geq 0},
\end{equation}
where we write $x^y=\prod_{j=1}^d x_j^{y_j}$ for any vectors $x,y\in\mathbb{R}^d_{\geq 0}$. The relationship between the mesoscopic stochastic model and the macroscopic deterministic model is revealed by the following celebrated Kurtz theorem \cite{kurtz1971limit, kurtz1972relationship}: For any $\delta,T>0$, whenever $x_0^V\in E_V$ and $x_0^V\rightarrow x_0\in\mathbb{R}^d_{\geq 0}$, then
\begin{equation}\label{kurtz}
\lim_{V\rightarrow \infty}\mathbb{P}_{x_0^V}(\sup_{t\in [0,T]}\|X^V(t)-x(t)\|\leq\delta)=1,
\end{equation}
where $\mathbb{P}_{x_0^V}(\cdot) = \mathbb{P}(\cdot|X^V(0) = x_0^V)$ and $\|x\|$ denotes Euclidean norm of $x\in \mathbb{R}^d$. This implies that as the system size tends to infinity, the trajectories of the stochastic model will converge to those of the deterministic model on any compact time interval, whenever the initial value converges.

The limit theorem in \eqref{kurtz} can be viewed as the law of large numbers for the stochastic model. The corresponding large deviation principle was proved recently by Agazzi et al. \cite[Theorem 1.6]{agazzi2018large} and is stated as follows. The \emph{Hamiltonian} of a stochastic reaction network is defined as
\begin{equation}\label{hamiltonian}
H(x,\theta) = \sum_{p=1}^{r}\left[f^+_p (x)\left(e^{\omega_p\cdot\theta}-1\right)
+f^-_p (x)\left(e^{-\omega_p\cdot\theta}-1\right)\right],\;\;\;x\in\mathbb{R}^d_{\geq 0},\;\theta\in\mathbb{R}^d,
\end{equation}
where $x\cdot y = \sum_{j=1}^dx_jy_j$ denotes the usual scalar product on $\mathbb{R}^d$. The \emph{Lagrangian} of a stochastic reaction network is then defined as the Legendre-Fenchel transform of the Hamiltonian with respect to the variable $\theta$, namely
\begin{equation}\label{localrate}
L(x,y) = \sup_{\theta\in \mathbb{R}^d}(\theta\cdot y-H(x,\theta)),\;\;\;
x\in\mathbb{R}^d_{\geq 0},\;y\in\mathbb{R}^d.
\end{equation}
The Lagrangian is nonnegative because $L(x,y)\geq 0\cdot y-H(x,0) = 0$. Moreover, it is not hard to prove that $L(x,y) = \infty$ for any $y\notin\mathrm{span}(V(\mathcal{R}))$. This is because any $y\notin\mathrm{span}(V(\mathcal{R}))$ can be decomposed uniquely as $y = y_1+y_2$, where $y_1\in\mathrm{span}(V(\mathcal{R}))$ and $0\neq y_2\in\mathrm{span}(V(\mathcal{R}))^\perp$. Thus for any $K>0$, we have $L(x,y)\geq Ky_2\cdot y-H(x,Ky_2) = K\|y_2\|^2$, where we have used the fact that $H(x,\theta) = 0$ for any $\theta\in\mathrm{span}(V(\mathcal{R}))^\perp$. Since $K$ is arbitrarily chosen, we conclude that $L(x,y) = \infty$.

To proceed, let $D_{[0,T]}(\mathbb{R}^d_{\geq 0})$ denote the space of c\`{a}dl\`{a}g functions $\phi:[0,T]\rightarrow \mathbb{R}^d_{\geq 0}$ equipped with the topology of uniform convergence. For any $x_0\in\mathbb{R}^d_{\geq 0}$, let $I_{x_0,T}:D_{[0,T]}(\mathbb{R}^d_{\geq 0})\rightarrow[0,\infty]$ be the function defined as
\begin{equation*}
I_{x_0,T}(\phi) =
\begin{cases}
\int_0^T L(\phi(t),\dot{\phi}(t)){\rm d}t,\;\;\;&\text{if $\phi$ is absolutely continuous and $\phi(0) = x_0$},\\
\infty,\;\;\;&\text{otherwise}.
\end{cases}
\end{equation*}
Using the properties of the Lagrangian, it is easy to see that $I_{x_0,T}(\phi) = \infty$ if there exists $0\leq t\leq T$ such that $\phi(t)\notin x_0+\mathrm{span}(V(\mathcal{R}))$. With these notation, Agazzi et al. \cite[Theorem 1.6]{agazzi2018large} proved the following result: provided that the network is asiphonic and strongly endotactic (see \cite[Definitions 1.8 and 1.9]{agazzi2018large} for detailed definitions), for any $x^V_0\in E_V$ and $x^V_0\rightarrow x_0$, the law of the process $\{X^V(t):t\in [0,T]\}$ with $X^V(0) = x^V_0$ satisfies a large deviation principle with rate $V$ and good \emph{rate function} $I_{x_0,T}$. The large deviation principle means that for any measurable set $\Gamma\subset D_{[0,T]}(\mathbb{R}^d_{\geq 0})$, we have
\begin{equation}\label{largedeviation}
\begin{split}
\liminf_{V\rightarrow\infty}\frac{1}{V}\log\mathbb{P}_{x_0^V}(X^V(\cdot)\in\Gamma^o) &\geq -\inf_{\phi\in\Gamma^o}I_{x_0,T}(\phi),\\
\limsup_{V\rightarrow\infty}\frac{1}{V}\log\mathbb{P}_{x_0^V}(X^V(\cdot)\in\bar\Gamma) &\leq -\inf_{\phi\in\bar\Gamma}I_{x_0,T}(\phi),
\end{split}
\end{equation}
where $\Gamma^o$ and $\bar\Gamma$ denote the interior and closure of $\Gamma$, respectively. Combining \eqref{kurtz} and \eqref{largedeviation}, it is easy to see that $I_{x_0,T}(x)=0$, where $x = x(t)$ is the solution of the deterministic model \eqref{deterministic}. The rate function can be used to define the following {\em quasi-potential}:
\begin{equation}\label{quasipotential}
W(x_0,y) = \inf{\{I_{x_0,T}(\phi):\phi(0)=x_0,\;\phi(T)=y,\;T\geq 0\}},\;\;\;x_0,y\in\mathbb{R}^d_{\geq 0}.
\end{equation}
Intuitively, $W(x_0,y)$ represents the ``cost" for the stochastic reaction network to move from $x_0$ to $y$. It is easy to see that the quasi-potential is nonnegative and jointly continuous in $x_0$ and $y$ \cite{olivieri2004large}. Using the properties of the Lagrangian, it is easy to see that $W(x_0,y) = \infty$ if $y\notin x_0+\mathrm{span}(V(\mathcal{R}))$.

Agazzi et al. \cite[Theorem 1.15]{agazzi2018large} also deals with the Freidlin-Wentzell-type metastability theory for chemical reaction networks, where the quasi-potential plays a central role. For simplicity, we consider the case where the domain under consideration contains only one stable equilibrium point. Specifically, we assume that the following four conditions are satisfied:\\
(a) Let $D$ be a bounded open domain in $\mathbb{R}^d_{\geq 0}$ with a piecewise $C^2$ boundary $\partial D$.\\
(b) Let $c\in D$ be an asymptotically stable equilibrium point of the deterministic model \eqref{deterministic}.\\
(c) The set $\bar{D} = D\cup\partial D$ is attracted to $c$, which means that whenever $x_0\in\bar D$, the solution of the deterministic model \eqref{deterministic} starting from $x_0$ satisfies $x(t)\in D$ for each $t>0$ and $x(t)\rightarrow c$ as $t\rightarrow\infty$.\\
(d) There exists a ball $B\subset\bar D$ such that for any $x\in B$ and $y\in\bar D$, the set $\bar D$ contains the line segment between $x$ and $y$.

It is easy to check that Assumptions A.3 and A.4 in \cite{agazzi2018large} are satisfied under these conditions. Then the Kurtz theorem implies that when $V$ is sufficiently large, the trajectory of the stochastic model will stay in the domain $D$ over any compact time interval with overwhelming probability. However, it is still possible for the system to escape from $D$. The mean exit time from $D$ has the following asymptotic behavior:
\begin{equation*}
\lim_{V\rightarrow\infty}\frac{1}{V}\log\mathbb{E}_{x_0^V}\tau_V = \inf_{y\in \partial D}W(c,y),
\end{equation*}
where $\tau_V=\inf\{t\geq 0:X^V(t)\notin D\}$ denotes the exit time of $X^V$ from $D$. Moreover, if there is a unique $y_0\in\partial D$ such that
\begin{equation*}
W(c,y_0) = \inf_{y\in \partial D}W(c,y),
\end{equation*}
then for any $\delta>0$, the exit position from $D$ has the following asymptotic behavior:
\begin{equation*}
\lim_{V\rightarrow\infty}\mathbb{P}_{x_0^V}(\|X^V(\tau_V)-y_0\|<\delta) = 1,
\end{equation*}
and for any $\delta>0$ and $z_0\in\partial D$, the exit distribution from $D$ has the following asymptotic behavior:
\begin{equation*}
\lim_{\delta\rightarrow 0}\lim_{V\rightarrow\infty}
\frac{1}{V}\log\mathbb{P}_{x_0^V}(\|X^V(\tau_V)-z_0\|<\delta) = W(c,y_0)-W(c,z_0).
\end{equation*}
Intuitively, when $V$ is sufficiently large, the stochastic model will escape from $D$ around a particular point $y_0\in\partial D$ at which the quasi-potential restricted to $\partial D$ attains its minimum.

\section{Detailed balance for chemical reaction networks}
In this section, we investigate the relationship among different types of detailed balance conditions for chemical reaction networks. Before stating our results, we first recall the definitions of deterministic and stochastic detailed balance for chemical reaction networks \cite{joshi2015detailed}.

\begin{definition}
	We say that a reaction network satisfies \emph{deterministic detailed balance} (or reaction network detailed balance \cite{joshi2015detailed}) if there exists $c\in\mathbb{R}^d_{>0}$ such that
	\begin{equation}\label{detailed}
	k^+_{pl}c^{\nu_{pl}}=k^-_{pl}c^{\nu'_{pl}},\;\;\;\text{for any\;}1\leq p\leq r,\;1\leq l\leq r_p.
	\end{equation}
	Here $c$ is called a \emph{chemical equilibrium state} of the reaction network.
\end{definition}

Clearly, any chemical equilibrium state $c$ is also an equilibrium point of the deterministic model \eqref{deterministic} and thus it is also called a \emph{detailed balanced equilibrium point}. It has been shown that for mass action kinetics, if one positive equilibrium point of the deterministic model is detailed balanced, then every positive equilibrium point is detailed balanced \cite{joshi2015detailed,1989Necessary}.

\begin{definition}
	We say that a reaction network satisfies \emph{stochastic detailed balance} (or Markov chain detailed balance \cite{joshi2015detailed}) if for any $V>0$, there exists a probability measure $\pi^V = (\pi^V_x)$ on $E_V$ such that
	\begin{equation*}
	\pi^V_{x}q^V_{x,y}=\pi^V_{y}q^V_{y,x},\;\;\;\text{for any\;}x,y\in E_V.
	\end{equation*}
	Note that here we do not require that $\pi^V$ is a probability distribution.
\end{definition}

A simple method of verifying stochastic detailed balance is to use the \emph{Kolmogorov criterion} \cite{kolmogoroff1936theorie}, which states that a reaction network satisfies stochastic detailed balance if and only if for any $V>0$, the transition rates satisfy the following \emph{Kolmogorov cycle condition}:
\begin{equation*}
q^V_{x_1,x_2}q^V_{x_2,x_3}\cdots q^V_{x_n,x_1} = q^V_{x_2,x_1}q^V_{x_3,x_2}\cdots q^V_{x_1,x_n}
\end{equation*}
for any finite number of states $x_1,\dots,x_n\in E_V$. In other words, the Kolmogorov criterion states that a reaction network satisfies stochastic detailed balance if and only if for any $V>0$, the product of the transition rates of the stochastic model along any cycle is equal to that along the reversed cycle.

Besides deterministic and stochastic detailed balance, we introduce another type of detailed balance which is defined as follows. This new type of detailed balance will play an important role in constructing the global potential of a chemical reaction network.

\begin{definition}\label{conditions}
	(i) We say that a reaction network satisfies {\em zero-order local detailed balance} if for any integers $\xi_1,\dots,\xi_r$ satisfying
	$\sum_{p=1}^r \xi_p\omega_p = 0$, we have
	\begin{equation}\label{continuouscyclecondition}
	\sum_{p=1}^r \xi_p\log \frac{f^+_p(x)}{f^-_p(x)}=0,\;\;\;\text{for any\;}x\in \mathbb{R}_{>0}^d,
	\end{equation}
	where $f^+_p(x)$ and $f^-_p(x)$ are the functions defined in \eqref{definitionf}. \\
	(ii) We say that a reaction network satisfies {\em first-order local detailed balance} if for any $1\leq p,q\leq r$ with $p\neq q$, we have
	\begin{equation}\label{cparallelogram}
	\omega_{q}\cdot\nabla\left(\log \frac{f^+_{p}(x)}{f^-_{p}(x)}\right)
	= \omega_{p}\cdot\nabla\left(\log \frac{f^+_{q}(x)}{f^-_{q}(x)}\right),\;\;\;
	\text{for any\;}x\in \mathbb{R}_{>0}^d.
	\end{equation}
	(iii) We say that a reaction network satisfies {\em local detailed balance} if it satisfies both zero-order and first-order local detailed balance.
\end{definition}

\begin{remark}
	The ideas behind the above definition are explained as follows. For any integers $\xi_1,\dots,\xi_r$ satisfying $\sum_{p=1}^r\xi_p\omega_p = 0$, we can construct a cycle $C$ in the integer lattice $\mathbb{Z}^d$, which is given by
	\begin{equation*}
	\begin{split}
	&C: 0\rightarrow\mathrm{sgn}(\xi_1)\omega_1\rightarrow\cdots\rightarrow \xi_1\omega_1\\ &\rightarrow\xi_1\omega_1+\mathrm{sgn}(\xi_2)\omega_2\rightarrow\cdots\rightarrow
	\xi_1\omega_1+\xi_2\omega_2\rightarrow\cdots \\
	&\rightarrow\xi_1\omega_1+\xi_2\omega_2+\cdots+\mathrm{sgn}(\xi_r)\omega_r\rightarrow\cdots\rightarrow \xi_1\omega_1+\xi_2\omega_2+\cdots+\xi_r\omega_r = 0,
	\end{split}
	\end{equation*}
	where $\mathrm{sgn}(x)$ is the sign function which takes the value of $1$ if $x>0$, takes the value of $0$ if $x=0$, and takes the value of $-1$ if $x<0$. Obviously, for any $x^V\in E_V$ and $x^V\rightarrow x\in\mathbb{R}^d_{>0}$, the cycle $C$ can induce a cycle $x^V+C/V$ in $E_V$ around $x$, which becomes increasingly smaller as $V$ increases. For convenience, let $\eta_1\rightarrow\eta_2\rightarrow\dots\rightarrow \eta_L\rightarrow\eta_1$ denote the induced cycle in $E_V$, where $L = \sum_{p=1}^r|\xi_p|$ is the number of transitions in the cycle. If a reaction network satisfies stochastic detailed balance, then it follows from Kolmogorov's cycle condition that
		\begin{equation}\label{kolmogorov}
		f\left(\frac{1}{V}\right) := \log \frac{q^V_{\eta_1,\eta_2}q^V_{\eta_2,\eta_3}\cdots q^V_{\eta_L,\eta_1}}{q^V_{\eta_2,\eta_1}q^V_{\eta_3,\eta_2}\cdots q^V_{\eta_1,\eta_L}}=0.
		\end{equation}
		Note that the left-hand side of this equality is a function of $1/V$. Since $f(1/V) = 0$ for all $V>0$, we have
		\begin{equation*}
		f(0) := \lim_{V\rightarrow\infty}f\left(\frac{1}{V}\right) = 0\;\;\;\textrm{(zero-order information)},
		\end{equation*}
		and
		\begin{equation*}
		f'(0) := \lim_{V\rightarrow\infty}\frac{f\left(\frac{1}{V}\right)-f(0)}{\frac{1}{V}} = 0\;\;\;\textrm{(first-order information)}.
		\end{equation*}
		Roughly speaking, the condition \eqref{continuouscyclecondition} extracts the zero-order information of the equality \eqref{kolmogorov} as $V\rightarrow\infty$ and the condition \eqref{cparallelogram} extracts the first-order information of the equality \eqref{kolmogorov} as $V\rightarrow\infty$. Since the induced cycle becomes increasingly smaller as $V$ increases, \eqref{continuouscyclecondition} and  \eqref{cparallelogram} actually contain the zero-order and first-order local information of detailed balance around $x\in\mathbb{R}^d_{>0}$, respectively.
\end{remark}

It is a well-known result that deterministic detailed balance implies stochastic detailed balance for a chemical reaction network \cite[Theorem 5.9]{joshi2015detailed}. The following theorem reveals the relationship between stochastic and local detailed balance.

\begin{theorem}\label{result1}
	If a reaction network satisfies stochastic detailed balance, then it also satisfies local detailed balance. In other words, stochastic detailed balance implies local detailed balance.
\end{theorem}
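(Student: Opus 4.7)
The plan is to apply Kolmogorov's cycle condition to two carefully chosen families of cycles in $E_V$ and to extract their zero-order and first-order asymptotics as $V\to\infty$, exactly as foreshadowed in the remark preceding the theorem. The crucial analytic input is the elementary expansion
\[
\frac{(Vy)!}{(Vy-\nu)!} = V^{|\nu|}y^{\nu} + O(V^{|\nu|-1}),\qquad y\in\Rnum_{>0}^d,
\]
together with its refined $1/V$ version. These imply that the log-ratio
\[
F^V_p(y) := \log\frac{q^V_{y,\,y+\omega_p/V}}{q^V_{y+\omega_p/V,\,y}}
\]
satisfies $F^V_p(y) = F_p(y) + O(1/V)$ with $F_p(y) := \log(f^+_p(y)/f^-_p(y))$, uniformly on compact subsets of $\Rnum_{>0}^d$, along with a matching $O(1/V)$ estimate for $\nabla F^V_p - \nabla F_p$.

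For zero-order local detailed balance, I would fix $x\in\Rnum_{>0}^d$ and integers $\xi_1,\dots,\xi_r$ with $\sum_p\xi_p\omega_p=0$, choose $x^V\in E_V$ with $x^V\to x$, and build the induced cycle $\eta_1\to\eta_2\to\cdots\to\eta_L\to\eta_1$ described in the remark, with $L=\sum_p|\xi_p|$ transitions; for $V$ large enough all $\eta_i$ lie in the strictly positive orthant and every rate is strictly positive. Taking logarithms of Kolmogorov's cycle condition gives
\[
\sum_{i=1}^L\log\frac{q^V_{\eta_i,\eta_{i+1}}}{q^V_{\eta_{i+1},\eta_i}} = 0.
\]
A $+\omega_p$ edge contributes $F^V_p(\eta_i)$ and a $-\omega_p$ edge contributes $-F^V_p(\eta_{i+1})$; since each $\eta_i\to x$, summing over the $|\xi_p|$ transitions of type $\pm\omega_p$ yields $\xi_p F_p(x)$ in the limit, whence $\sum_p\xi_p F_p(x)=0$, which is precisely \eqref{continuouscyclecondition}.

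For first-order local detailed balance, I would fix distinct $1\le p,q\le r$ and apply Kolmogorov's cycle condition to the parallelogram cycle
\[
x^V\;\to\;x^V+\frac{\omega_p}{V}\;\to\;x^V+\frac{\omega_p+\omega_q}{V}\;\to\;x^V+\frac{\omega_q}{V}\;\to\;x^V
\]
with $x^V\to x$. Pairing each of the four edges with its reverse and rewriting the resulting log-ratios in terms of $F^V_p$ and $F^V_q$ collapses the cycle identity to the ``discrete mixed partials'' relation
\[
F^V_q\!\left(x^V+\frac{\omega_p}{V}\right) - F^V_q(x^V) = F^V_p\!\left(x^V+\frac{\omega_q}{V}\right) - F^V_p(x^V).
\]
Because $\nabla(F^V_p - F_p) = O(1/V)$ uniformly on compact sets, the $O(1/V)$ correction of $F^V_p$ contributes only $O(1/V^2)$ to each discrete increment; multiplying through by $V$ and sending $V\to\infty$ yields $\omega_p\cdot\nabla F_q(x) = \omega_q\cdot\nabla F_p(x)$, which is \eqref{cparallelogram}.

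The main obstacle is the technical, but essentially mechanical, verification of the uniform $1/V$ expansion of $F^V_p$ with gradient control on compact subsets of $\Rnum_{>0}^d$: mere pointwise convergence $F^V_p\to F_p$ is not enough to commute the limit $V\to\infty$ with the discrete derivatives of size $1/V$ appearing in the parallelogram identity. Once this regularity of the expansion is in place, the remainder of the argument is a direct combinatorial accounting of Kolmogorov's cycle condition along the two chosen cycles, and no additional global structure of the network is needed.
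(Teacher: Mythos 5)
Your proposal is correct and follows essentially the same route as the paper's proof: the zero-order condition is obtained by applying the Kolmogorov cycle condition to the induced cycle and passing to the limit of the rate ratios $q^V_{x^V,x^V+\omega_p/V}/q^V_{x^V+\omega_p/V,x^V}\to f^+_p(x)/f^-_p(x)$, and the first-order condition comes from the same parallelogram cycle, rearranged into the discrete mixed-partials identity and then scaled by $V$. The only difference is presentational: where the paper tersely invokes the mean value theorem to evaluate $\lim_V V\,\mathrm{I}_V$ and $\lim_V V\,\mathrm{II}_V$, you make explicit the uniform $1/V$ expansion of $F^V_p$ with gradient control on compacts, which is precisely the estimate needed to justify that step.
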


\begin{proof}
	The proof of the theorem will be given in Section \ref{result1proof}.
\end{proof}

The next corollary follows from Theorem \ref{result1} and \cite[Theorem 5.9]{joshi2015detailed} immediately.

\begin{corollary}\label{relationship}
	For a chemical reaction network, the following statements hold:\\
	(a) Deterministic detailed balance implies stochastic detailed balance.\\
	(b) Stochastic detailed balance implies local detailed balance.\\
	(c) Local detailed balance implies zero-order local detailed balance.
\end{corollary}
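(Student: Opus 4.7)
The plan is to observe that this corollary is a packaging statement: each of its three parts is either a direct citation or an immediate consequence of the definition of local detailed balance, so no substantive argument is needed beyond assembling these pieces in the right order.

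For part (a), I would simply invoke \cite[Theorem 5.9]{joshi2015detailed}, which proves that deterministic detailed balance (their ``reaction network detailed balance'') implies stochastic detailed balance (their ``Markov chain detailed balance''). There is nothing to verify here since the definitions used in \cite{joshi2015detailed} coincide with Definitions 3.2 and 3.3 of the present paper.

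For part (b), I would simply appeal to Theorem \ref{result1}, whose statement is precisely the implication in question. All the real work — translating Kolmogorov's cycle condition into the zero-order identity \eqref{continuouscyclecondition} and the first-order identity \eqref{cparallelogram} by taking the appropriate asymptotic expansion in $V$ — is postponed to the proof of Theorem \ref{result1} in Section \ref{result1proof}.

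For part (c), I would note that this is immediate from Definition \ref{conditions}(iii): local detailed balance is \emph{defined} as the conjunction of zero-order local detailed balance and first-order local detailed balance, so the projection onto the first conjunct gives the stated implication.

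The main ``obstacle'' is purely organizational rather than mathematical: the nontrivial content has been deferred to Theorem \ref{result1} (proved later) and to the cited result of Joshi, so the corollary itself amounts to a one-line citation for (a), a one-line citation for (b), and an invocation of Definition \ref{conditions}(iii) for (c). Consequently, a three- or four-sentence proof listing these three justifications suffices.
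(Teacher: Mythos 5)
Your proposal is correct and matches the paper exactly: the paper also obtains (a) from \cite[Theorem 5.9]{joshi2015detailed}, (b) from Theorem \ref{result1}, and (c) implicitly from Definition \ref{conditions}(iii), treating the corollary as an immediate consequence of these pieces. No further comment is needed.
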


The above corollary reveals the inclusion relationship among four types of detailed balance: deterministic, stochastic, local, and zero-order local detailed balance, as illustrated in Fig. \ref{containment}. Deterministic detailed balance is the strongest and zero-order local detailed balance is the weakest. The following proposition reveals when the four types of detailed balance are equivalent.
\begin{figure}[htb]
	\centering\includegraphics[width=100mm]{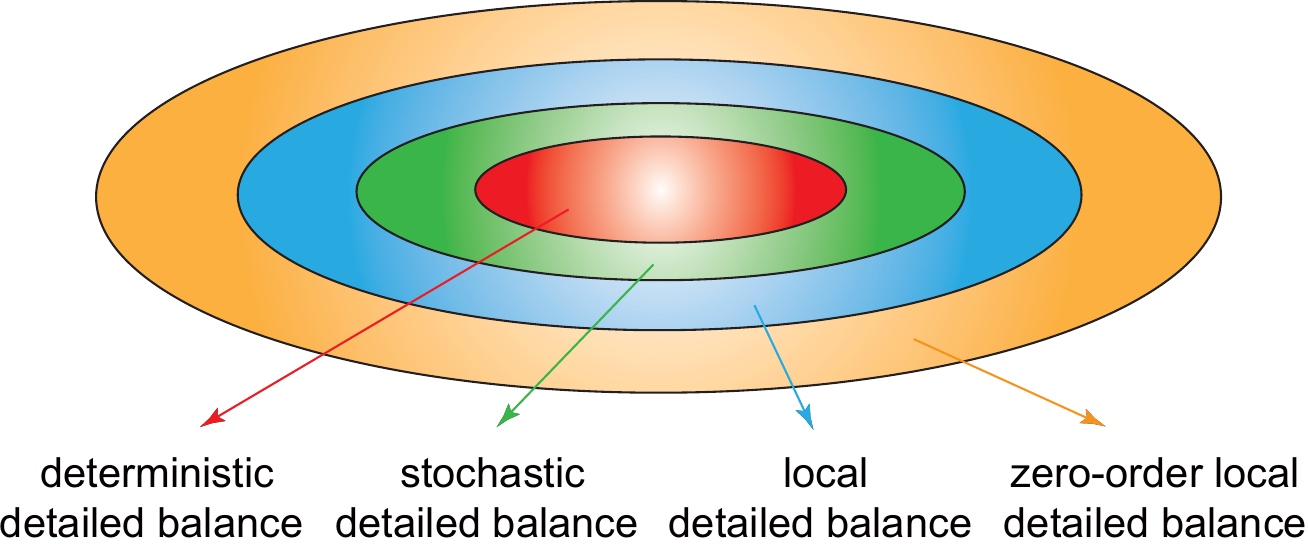}
	\caption{Inclusion relationship among four types of detailed balance conditions for chemical reaction networks: deterministic, stochastic, local, and zero-order local detailed balance. The four conditions are equivalent for chemical networks without equivalent reactions and are not equivalent for chemical networks with equivalent reactions.}
	\label{containment}
\end{figure}

\begin{proposition}
	If a chemical network has no equivalent reactions, then the following statements are equivalent:\\
	(a) The network satisfies deterministic detailed balance.\\
	(b) The network satisfies stochastic detailed balance.\\
	(c) The network satisfies local detailed balance.\\
	(d) The network satisfies zero-order local detailed balance.
\end{proposition}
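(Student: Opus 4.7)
The plan is to close the loop in Corollary \ref{relationship}, which already supplies $(a)\Rightarrow(b)\Rightarrow(c)\Rightarrow(d)$. Hence the only remaining implication is $(d)\Rightarrow(a)$: under the absence of equivalent reactions, zero-order local detailed balance implies deterministic detailed balance.

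Since there are no equivalent reactions, the multiplicity $r_p = 1$ for every $p$, so I drop the index $l$ and write $f^+_p(x) = k^+_p x^{\nu_p}$ and $f^-_p(x) = k^-_p x^{\nu'_p}$. Taking logarithms gives
\begin{equation*}
\log\frac{f^+_p(x)}{f^-_p(x)} = \log\frac{k^+_p}{k^-_p} - \omega_p \cdot \log x, \qquad x \in \Rnum^d_{>0},
\end{equation*}
where $\log x = (\log x_1, \ldots, \log x_d)$. For any integer tuple $(\xi_1,\ldots,\xi_r)$ with $\sum_{p=1}^r \xi_p \omega_p = 0$, plugging this expression into \eqref{continuouscyclecondition} makes the $\log x$-dependent contribution $-\bigl(\sum_p \xi_p \omega_p\bigr)\cdot \log x$ vanish identically, leaving the purely algebraic constraint $\sum_{p=1}^r \xi_p \log(k^+_p/k^-_p) = 0$ for every integer tuple in the kernel.

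Next I would upgrade this integer identity to a real-coefficient one. Since the $\omega_p$'s lie in $\Znum^d$, the kernel of the linear map $\Rnum^r \to \Rnum^d$ given by $(\xi_p)\mapsto\sum_p\xi_p\omega_p$ is a rational subspace and therefore admits an integer basis; consequently every real relation $\sum_p a_p \omega_p = 0$ is a real combination of integer relations, so the identity extends to all $(a_p)\in\Rnum^r$ with $\sum_p a_p\omega_p = 0$. This is precisely the Fredholm solvability condition for the linear system $\omega_p \cdot u = \log(k^+_p/k^-_p)$, $1\leq p\leq r$, in the unknown $u\in\Rnum^d$. Choosing any such $u$ and setting $c = (e^{u_1},\ldots,e^{u_d}) \in \Rnum^d_{>0}$, I obtain $c^{\omega_p} = e^{\omega_p\cdot u} = k^+_p/k^-_p$, that is $k^+_p c^{\nu_p} = k^-_p c^{\nu'_p}$ for every $p$, which is exactly the deterministic detailed balance condition \eqref{detailed}. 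The one step that requires any care is the passage from integer to real linear relations among the $\omega_p$'s, but this follows immediately from the observation that the kernel of an integer matrix is a rational subspace and hence is spanned by integer vectors; the rest is routine computation and standard linear algebra.
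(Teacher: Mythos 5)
Your proof is correct and follows essentially the same route as the paper: reduce to showing (d) implies (a) via Corollary \ref{relationship}, use the absence of equivalent reactions ($r_p=1$) to write $\log\bigl(f^+_p(x)/f^-_p(x)\bigr)=\log(k^+_p/k^-_p)-\omega_p\cdot\log x$, and deduce the Wegscheider cycle condition $\sum_p\xi_p\log(k^+_p/k^-_p)=0$ for all integer relations $\sum_p\xi_p\omega_p=0$. The only difference is the final step: the paper closes by citing the known equivalence of the Wegscheider condition with deterministic detailed balance, whereas you verify that implication directly (rationality of the kernel to pass from integer to real relations, then Fredholm solvability of $\omega_p\cdot u=\log(k^+_p/k^-_p)$ and $c=e^u$), which is a correct, self-contained completion of the same argument.
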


\begin{proof}
	By Corollary \ref{relationship}, we only need to prove that (d) implies (a). If the network satisfies zero-order local detailed balance, for any integers $\xi_1,\cdots,\xi_r$ satisfying $\sum_{p=1}^r\xi_p\omega_p=0$, we have
	\begin{equation*}
	\sum_{p=1}^r\xi_p\log\frac{f^+_p(x)}{f^-_p(x)} = 0,\;\;\;x\in\mathbb{R}^d_{>0}.
	\end{equation*}
	Since the network has no equivalent reactions, we have $r_p=1$ for any $1\leq p\leq r$. This shows that
	\begin{equation}\label{noequivalent}
	\log\frac{f^+_p(x)}{f^-_p(x)} = \log\left(\frac{k^+_{p1}}{k^-_{p1}}x^{-\omega_p}\right)
	= \log\frac{k^+_{p1}}{k^-_{p1}}-\omega_p\cdot\log x ,
	\end{equation}
	where $\log x = (\log x_1,\cdots,\log x_d)$. Combining the above two equations shows that
	\begin{equation*}
	0 = \sum_{p=1}^r \xi_p\log\frac{f^+_p(x)}{f^-_p(x)}
	=\sum_{p=1}^r \xi_p\log\frac{k^+_{p1}}{k^-_{p1}}-\sum_{p=1}^r \xi_p\omega_p\cdot \log x
	=\sum_{p=1}^r \xi_p\log\frac{k^+_{p1}}{k^-_{p1}} = 0.
	\end{equation*}
	Thus we conclude that for any integers $\xi_1,\cdots,\xi_r$ satisfying $\sum_{p=1}^r\xi_p(\nu_{p1}'-\nu_{p1}) = 0$, we have
	\begin{equation*}
	\sum_{p=1}^r \xi_p\log\frac{k^+_{p1}}{k^-_{p1}} = 0.
	\end{equation*}
	This is exactly the so-called Wegscheider cycle condition, which is widely known as the sufficient and necessary condition for deterministic detailed balance \cite[Proposition 1]{2011Extended}. Therefore, we have proved that (d) implies (a).
\end{proof}

We have seen that if a chemical network has no equivalent reactions, then the four types of detailed balance are equivalent. For chemical networks having equivalent reactions, however, the four types of detailed balance are no longer equivalent, as can be seen from the following three counterexamples.

The following example \cite{vellela2008stochastic, joshi2015detailed} gives a reaction network that satisfies stochastic detailed balance but violates deterministic detailed balance.

\begin{example}\label{schlogl}
	Consider the following well-known Schl{\"o}gl model \cite{vellela2008stochastic}:
	\begin{equation*}
	\varnothing\autorightleftharpoons{$k^+_{11}$}{$k^-_{11}$} S_1, \;\;\;  2S_1 \autorightleftharpoons{$k^+_{12}$}{$k^-_{12}$} 3S_1.
	\end{equation*}
	The stochastic model of this reaction network is a one-dimensional birth-death process and thus must satisfy stochastic detailed balance. Moreover, it is easy to check that deterministic detailed balance is satisfied if and only if $k^+_{11}/k^-_{11}=k^+_{12}/k^-_{12}$ \cite{vellela2008stochastic}. In other words, if $k^+_{11}/k^-_{11}\neq k^+_{12}/k^-_{12}$, then deterministic detailed balance is violated.
\end{example}

The following example gives a reaction network that satisfies local detailed balance but violates stochastic detailed balance.

\begin{example}
	Consider the following chemical reaction system:
	\begin{gather*}
	\varnothing\autorightleftharpoons{$k^+_1$}{$k^-_{1}$} S_1,\;\;\; S_1\autorightleftharpoons{$k^+_2$}{$k^-_{2}$} 2S_1,\\
	\varnothing\autorightleftharpoons{$(k^+_1)^2$}{$(k^-_{1})^2$} 2S_1,\;\;\; S_1\autorightleftharpoons{$2k_1^+k^+_2$}{$2k^-_{1}k^-_2$} 3S_1, \;\;\;  2S_1\autorightleftharpoons{$(k^+_2)^2$}{$(k^-_{2})^2$} 4S_1.
	\end{gather*}
	By definition, the forward reactions are given by
	\begin{equation*}
	\varnothing\xrightarrow{k^+_{1}}S_1,\;\;\;
	S_1\xrightarrow{k^+_{2}}2S_1,\;\;\;
	\varnothing\xrightarrow{(k^+_{1})^2}2S_1,\;\;\;
	S_1\xrightarrow{2k^+_{1}k_2^+}3S_1,\;\;\;
	2S_1\xrightarrow{(k^+_2)^2}4S_1,
	\end{equation*}
	and the backward reactions are given by
	\begin{equation*}
	S_1\xrightarrow{k^-_{1}}\varnothing,\;\;\;
	2S_1\xrightarrow{k^-_{2}} S_1, \;\;\;
	2S_1\xrightarrow{(k^-_{1})^2} \varnothing,\;\;\;
	3S_1\xrightarrow{2k^-_{1}k^-_2} S_1,\;\;\;
	4S_1\xrightarrow{(k^-_2)^2} 2S_1.
	\end{equation*}
	It is easy to see that the first two forward reactions have the same reaction vector $\omega_1=1$ and the last three forward reactions also have the same reaction vector $\omega_2=2$. The multiplicities of the two reaction vectors are given by $r_1=2$ and $r_2=3$, respectively.
	
	We first prove that the system satisfies local detailed balance. Clearly, the two reaction vectors are linearly related by $\xi_1\omega_1+\xi_2\omega_2 = 0$ with $\xi_1 = 2$ and $\xi_2 = -1$. It is easy to check that
	\begin{equation*}
	\begin{split}
	\log\frac{f_2^+(x)}{f_2^-(x)}
	&= \log\frac{(k^+_1)^2+2k^+_1k^+_2x+(k^+_2)^2x^2}{(k^-_1)^2x^2+2k^-_1k^-_2x^3+(k^-_2)^2x^4}
	= \log\frac{(k^+_1+k^+_2x)^2}{(k^-_1x+k^-_2x^2)^2}\\
	&= 2\log\frac{k^+_1+k^+_2x}{k^-_1x+k^-_2x^2} = 2\log\frac{f_1^+(x)}{f_1^-(x)}.
	\end{split}
	\end{equation*}
	Therefore, we have
	\begin{equation*}
	\xi_1\log\frac{f_1^+(x)}{f_1^-(x)}+\xi_2\log\frac{f_2^+(x)}{f_2^-(x)}
	= 2\log\frac{f_1^+(x)}{f_1^-(x)}-\log\frac{f_2^+(x)}{f_2^-(x)} = 0,
	\end{equation*}
	which shows that zero-order local detailed balance is satisfied. Moreover, we have
	\begin{equation*}
	\omega_2\cdot\nabla\left(\log\frac{f^+_1(x)}{f^-_1(x)}\right)
	-\omega_1\cdot\nabla\left(\log\frac{f^+_2(x)}{f^-_2(x)}\right)
	= 2\frac{{\rm d}}{{\rm d} x}\left(\log\frac{f^+_1(x)}{f^-_1(x)}\right)
	-\frac{{\rm d}}{{\rm d} x}\left(\log\frac{f^+_2(x)}{f^-_2(x)}\right) = 0,
	\end{equation*}
	which shows that first-order local detailed balance is also satisfied.
	
	We next prove that the system violates stochastic detailed balance. For each $V>0$, consider the following cycle in $E_V$:
	\begin{equation*}
	\frac{n}{V}\rightarrow \frac{n+1}{V}\rightarrow \frac{n+2}{V}\rightarrow \frac{n}{V}.
	\end{equation*}
	The transition rates along this cycle and its reversed cycle are given by
	\begin{gather*}
	q^V_{\frac{n}{V},\frac{n+1}{V}} = k^+_1V+k_2^+n,\;\;\;
	q^V_{\frac{n+1}{V},\frac{n}{V}} = k^-_1(n+1)+\frac{k^-_2}{V}(n+1)n,\\
	q^V_{\frac{n+1}{V},\frac{n+2}{V}} = k^+_1V+k_2^+(n+1),\;\;\;
	q^V_{\frac{n+2}{V},\frac{n+1}{V}} = k^-_1(n+2)+\frac{k^-_2}{V}(n+2)(n+1),\\
	q^V_{\frac{n+2}{V},\frac{n}{V}}
	= \frac{(k^-_1)^2(n+2)(n+1)}{V}+\frac{2k^-_1k^-_2(n+2)(n+1)n}{V^2}+\frac{(k^-_2)^2(n+2)(n+1)n(n-1)}{V^3},\\
	q^V_{\frac{n}{V},\frac{n+2}{V}} = (k_1^+)^2V+2k_1^+k_2^+n+\frac{(k_2^+)^2}{V}n(n-1).
	\end{gather*}
	Direct computation shows that
	\begin{equation*}
	\begin{split}
	&\;\frac{q^V_{\frac{n}{V},\frac{n+1}{V}}q^V_{\frac{n+1}{V},\frac{n+2}{V}}q^V_{\frac{n+2}{V},\frac{n}{V}}}
	{q^V_{\frac{n+1}{V},\frac{n}{V}}q^V_{\frac{n+2}{V},\frac{n+1}{V}}q^V_{\frac{n}{V},\frac{n+2}{V}}}\\
	=&\;\frac{(k^+_1V+k_2^+n)[k^+_1V+k_2^+(n+1)][(k^-_1)^2V^2+2k^-_1k^-_2nV+(k^-_2)^2n(n-1)]}
	{(k^-_1V+k^-_2n)[k^-_1V+k^-_2(n+1)][(k_1^+)^2V^2+2k^+_1k^+_2nV+(k_2^+)^2n(n-1)]}.
	\end{split}
	\end{equation*}
	It is easy to check that the left-hand side of this equation is equal to 1 if and only if $k_1^+/k^-_1=k^+_2/k^-_2$, which means that stochastic detailed balance is violated if $k_1^+/k^-_1\neq k^+_2/k^-_2$.
\end{example}

The following example gives a reaction network that satisfies zero-order local detailed balance but violates local detailed balance.

\begin{example}
	Consider the following chemical reaction system:
	\begin{equation*}
	\varnothing\autorightleftharpoons{$k^+_1$}{$k^-_{1}$} S_1,\;\;\;
	S_2 \autorightleftharpoons{$k^+_2$}{$k^-_{2}$} S_1+S_2,\;\;\;
	\varnothing \autorightleftharpoons{$k^-_1$}{$k^+_{1}$} S_2,\;\;\;
	S_2 \autorightleftharpoons{$k^-_2$}{$k^+_{2}$} 2S_2,\;\;\;
	\varnothing \autorightleftharpoons{$1$}{$1$} S_1+S_2.
	\end{equation*}
	By definition, the forward reactions are given by
	\begin{equation*}
	\varnothing\xrightarrow{k^+_{1}}S_1,\;\;\;
	S_2\xrightarrow{k^+_{2}}S_1+S_2, \;\;\;
	\varnothing\xrightarrow{k^-_{1}}S_2, \;\;\;
	S_2\xrightarrow{k^-_{2}}2S_2,\;\;\;
	\varnothing\xrightarrow{1}S_1+S_2,
	\end{equation*}
	and the backward reactions are given by
	\begin{equation*}
	S_1\xrightarrow{k^-_{1}}\varnothing,\;\;\;
	S_1+S_2\xrightarrow{k^-_{2}}S_2, \;\;\;
	S_2\xrightarrow{k^+_{1}}\varnothing, \;\;\;
	2S_2\xrightarrow{k^+_{2}}S_2,\;\;\;
	S_1+S_2\xrightarrow{1}\varnothing.
	\end{equation*}
	The first two forward reactions have the same reaction vector $\omega_1=(1,0)$, the next two forward reactions have the same reaction vector $\omega_2=(0,1)$, and the reaction vector of the last forward reaction is given by $\omega_3=(1,1)$. The multiplicities of the three reaction vectors are given by $r_1 = r_2=2$ and $r_3=1$, respectively.
	
	We first prove that the system satisfies zero-order local detailed balance. Clearly, the three reaction vectors are linearly related by $\xi_1\omega_1+\xi_2\omega_2+\xi_3\omega_3 = 0$ with $\xi_1 = \xi_2= 1$ and $\xi_3 = -1$. It is easy to check that
	\begin{gather*}
	\frac{f_1^+(x)}{f_1^-(x)} = \frac{k^+_1+k^+_2x_2}{k^-_1x_1+k^-_2x_1x_2},\;\;\;
	\frac{f_2^+(x)}{f_2^-(x)} = \frac{k^-_1+k^-_2x_2}{k^+_1x_2+k^+_2x_2^2},\;\;\;
	\frac{f_3^+(x)}{f_3^-(x)} = \frac{1}{x_1x_2}.
	\end{gather*}
	Therefore, we have
	\begin{equation*}
	\xi_1\log\frac{f_1^+(x)}{f_1^-(x)}+\xi_2\log\frac{f_2^+(x)}{f_2^-(x)}+\xi_3\log\frac{f_3^+(x)}{f_3^-(x)}
	= \log\frac{f_1^+(x)}{f_1^-(x)}+\log\frac{f_2^+(x)}{f_2^-(x)}-\log\frac{f_3^+(x)}{f_3^-(x)} = 0,
	\end{equation*}
	which shows that zero-order local detailed balance is satisfied. On the other hand, it is easy to check that
	\begin{gather*}
	\omega_2\cdot\nabla\left(\log\frac{f^+_1(x)}{f^-_1(x)}\right)
	= \frac{\partial}{\partial x_2}\left(\log \frac{k_1^++k_2^+x_2}{k^-_1x_1+k^-_2x_1x_2}\right)
	= \frac{k^-_1k^+_2-k^+_1k^-_2}{(k^+_1+k^+_2x_2)(k^-_1+k^-_2x_2)},\\
	\omega_1\cdot\nabla\left(\log\frac{f^+_2(x)}{f^-_2(x)}\right)
	= \frac{\partial}{\partial x_1}\left(\log \frac{k^-_1+k^-_2x_2}{k_1^+x_2+k_2^+x^2_2}\right)
	= 0.
	\end{gather*}
	Clearly, the left-hand sides of the above two equations are equal if and only if $k_1^+/k^-_1=k^+_2/k^-_2$. In other words, if $k_1^+/k^-_1\neq k^+_2/k^-_2$, then first-order local detailed balance is violated and thus the system does not satisfy local detailed balance.
\end{example}

In \cite[Theorem 4.2]{joshi2015detailed}, Joshi gave the following sufficient and necessary condition for deterministic detailed balance.

\begin{theorem}\label{joshi}\cite[Theorem 4.2]{joshi2015detailed}
	A reaction network satisfies deterministic detailed balance if and only if the following two conditions are satisfied:\\
	(a) For any $1\leq p\leq r$, the rate constants of the reactions in $\mathcal{R}^+_p$ and $\mathcal{R}^-_p$ satisfy
	\begin{equation*}\
	\frac{k^+_{p1}}{k^-_{p1}} = \frac{k^+_{p2}}{k^-_{p2}} = \cdots = \frac{k^+_{pr_p}}{k^-_{pr_p}}.
	\end{equation*}
	(b) For any integers $\xi_1,\cdots,\xi_r$ satisfying $\sum_{p=1}^r\xi_p\omega_p=0$, we have
	\begin{equation}\label{joshia}
	\sum_{p=1}^r\xi_p\log\frac{k^+_{p1}}{k^-_{p1}} = 0.
	\end{equation}
\end{theorem}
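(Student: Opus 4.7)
The plan is to prove necessity and sufficiency separately. Necessity is essentially an algebraic identity; sufficiency reduces to a linear-algebra existence statement with one lattice-theoretic subtlety.

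For necessity, I would start from a detailed balanced equilibrium $c\in\Rnum^d_{>0}$ and rearrange \eqref{detailed} to
\[
\frac{k^+_{pl}}{k^-_{pl}}=c^{\nu'_{pl}-\nu_{pl}}=c^{\omega_p},
\]
whose right-hand side is independent of $l$, giving (a) immediately. Taking logarithms gives $\log(k^+_{p1}/k^-_{p1})=\omega_p\cdot\log c$, so for any integers $\xi_p$ with $\sum_p\xi_p\omega_p=0$ one has $\sum_p\xi_p\log(k^+_{p1}/k^-_{p1})=\bigl(\sum_p\xi_p\omega_p\bigr)\cdot\log c=0$, which is (b).

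For sufficiency, I would use (a) to reduce \eqref{detailed} to the $r$ equations $c^{\omega_p}=\alpha_p$, where $\alpha_p:=k^+_{p1}/k^-_{p1}$ (by (a), a $c$ satisfying this also satisfies $c^{\omega_p}=k^+_{pl}/k^-_{pl}$ for every $l$). Setting $u=\log c\in\Rnum^d$, the problem becomes the linear system $\omega_p\cdot u=\log\alpha_p$ for $1\le p\le r$; any solution $u$ yields the desired equilibrium $c:=e^u\in\Rnum^d_{>0}$. By the Fredholm alternative this system is solvable if and only if the vector $b:=(\log\alpha_1,\dots,\log\alpha_r)\in\Rnum^r$ lies in the orthogonal complement of the kernel $K:=\{\xi\in\Rnum^r:\sum_p\xi_p\omega_p=0\}$.

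The only nontrivial point, and the one I expect to be the main obstacle, is that hypothesis \eqref{joshia} supplies orthogonality of $b$ only against the integer points $K\cap\Znum^r$, whereas the Fredholm criterion requires orthogonality against the whole real subspace $K$. I would close the gap by noting that $K$ is the nullspace of an integer matrix (its columns are the integer reaction vectors $\omega_p$), so by Gaussian elimination over $\mathbb{Q}$ it admits a rational basis and, after clearing denominators, an integer basis $\xi^{(1)},\dots,\xi^{(s)}\in K\cap\Znum^r$. Since these integer vectors span $K$ over $\Rnum$, orthogonality of $b$ against each $\xi^{(i)}$ -- which is exactly \eqref{joshia} applied to $\xi^{(i)}$ -- extends by linearity to orthogonality against all of $K$, and the Fredholm criterion then supplies the required $u$.
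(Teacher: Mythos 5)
The paper does not actually prove this statement---it is quoted verbatim from Joshi's work (Theorem 4.2 of the cited reference), so there is no in-paper proof to compare against; I can only assess your argument on its own terms, and it is correct and complete. The necessity direction is exactly the expected computation: \eqref{detailed} gives $k^+_{pl}/k^-_{pl}=c^{\omega_p}$, which yields (a) since the right-hand side does not depend on $l$, and taking logarithms yields (b). For sufficiency, your reduction to the solvability of the linear system $\omega_p\cdot u=\log(k^+_{p1}/k^-_{p1})$ and the appeal to the Fredholm alternative are sound, and you correctly identified and closed the one genuine subtlety: hypothesis \eqref{joshia} only constrains integer vectors in the kernel $K=\{\xi\in\Rnum^r:\sum_p\xi_p\omega_p=0\}$, while solvability needs $b\perp K$ over $\Rnum$. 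Your bridge---that $K$ is the kernel of a matrix with integer entries, hence admits a rational (and after clearing denominators, integer) basis whose real span is all of $K$, so that orthogonality against the integer kernel vectors propagates by linearity---is valid because the rank, and hence the kernel dimension, is the same over $\mathbb{Q}$ and over $\Rnum$. This is essentially the standard argument showing that the Wegscheider cycle condition (which the paper invokes elsewhere, in the proof of its Proposition on networks without equivalent reactions) characterizes deterministic detailed balance, here extended to networks with equivalent reactions via condition (a). No gaps.
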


While Joshi \cite[Theorem 5.14]{joshi2015detailed} also gave a sufficient condition for stochastic detailed balance, it is difficult to apply it in practice since an infinite number of restrictions need to be verified. Here we give a simpler sufficient condition for stochastic detailed balance that is more applicable in practice. To state our sufficient condition, we need the following definition.

\begin{definition}\label{definition}
	For each $1\leq p\leq r$, we say that the reaction vector $\omega_p$ satisfies the {\em orthogonality condition} if
	\begin{equation*}
	(\nu^j_{pl_1}-\nu^j_{pl_2})\omega_{q}^j=0,\;\;\;
	\text{for any\;}1\leq l_1,l_2\leq r_p,\;q\neq p,\;1\leq j\leq d.
	\end{equation*}
\end{definition}

It is easy to see that if $r_p = 1$, then the orthogonality condition is automatically satisfied for $\omega_p$. The following theorem provides a new sufficient condition for stochastic detailed balance. This sufficient condition is imposed directly on rate constants and only a finite number of restrictions need to be verified.

\begin{theorem}\label{result4}
	Suppose that the reaction vectors $\omega_1,\cdots,\omega_r$ are linearly independent. Suppose that for each $1\leq p\leq r$, either one of the following two conditions is satisfied:\\
	(a) The reaction vector $\omega_p$ satisfies the orthogonality condition. \\
	(b) The rate constants of the reactions in $\mathcal{R}^+_p$ and $\mathcal{R}^-_p$ satisfy
	\begin{equation}\label{joshib}
	\frac{k^+_{p1}}{k^-_{p1}}=\frac{k^+_{p2}}{k^-_{p2}}=\cdots=\frac{k^+_{pr_p}}{k^-_{pr_p}}.
	\end{equation}\\
	Then the reaction network satisfies stochastic detailed balance.
\end{theorem}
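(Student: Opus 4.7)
The plan is to construct a measure $\pi^V$ on each connected component of $E_V$ satisfying detailed balance $\pi^V_x q^V_{x,y} = \pi^V_y q^V_{y,x}$. Writing $\beta_p(x) = q^V_{x,\,x+\omega_p/V}/q^V_{x+\omega_p/V,\,x}$, the detailed balance condition is equivalent to $\pi^V_{x+\omega_p/V}/\pi^V_x = \beta_p(x)$ for every $p$ and every $x \in E_V$. Since $\omega_1,\ldots,\omega_r$ are linearly independent, each connected component of the transition graph is parametrized by an integer tuple $(n_1,\ldots,n_r)$ via $x = y_0 + \sum_p n_p \omega_p/V$ for a chosen base point $y_0$, and its cycle group is generated by trivial back-and-forth round trips (which satisfy Kolmogorov's condition automatically) together with the elementary parallelogram cycles $x \to x+\omega_p/V \to x+(\omega_p+\omega_q)/V \to x+\omega_q/V \to x$ for distinct $p, q$. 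A consistent definition of $\pi^V$ therefore exists --- equivalently, Kolmogorov's cycle criterion holds --- as soon as the ``discrete curl-free'' identity
\begin{equation*}
\beta_p(x)\, \beta_q(x+\omega_p/V) = \beta_q(x)\, \beta_p(x+\omega_q/V), \qquad p \neq q,
\end{equation*}
is established.

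Second, I would split $\beta_p$ into a combinatorial and a dynamical piece. Using $\nu'_{pl} = \nu_{pl} + \omega_p$ in the formula for $q^V_{x+\omega_p/V,\,x}$, a short manipulation yields the factorization $\beta_p(x) = \Omega_p(x)\, R_p(x)$, where
\begin{equation*}
\Omega_p(x) = \prod_{j=1}^d \frac{(Vx_j)!}{(Vx_j+\omega_p^j)!}, \qquad R_p(x) = \frac{\sum_{l=1}^{r_p} k^+_{pl}\, V^{1-|\nu_{pl}|}\, A^{(p)}_l(x)}{\sum_{l=1}^{r_p} k^-_{pl}\, V^{1-|\nu'_{pl}|}\, A^{(p)}_l(x)},
\end{equation*}
with $A^{(p)}_l(x) = \prod_j (Vx_j)!/(Vx_j-\nu^j_{pl})!$. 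The combinatorial factors $\Omega_p$ satisfy the parallelogram identity by a trivial telescoping, since both $\Omega_p(x)\Omega_q(x+\omega_p/V)$ and $\Omega_q(x)\Omega_p(x+\omega_q/V)$ equal $\prod_j (Vx_j)!/(Vx_j + \omega_p^j + \omega_q^j)!$. It therefore remains to show that $R_p(x+\omega_q/V) = R_p(x)$ for every $p$ and every $q \neq p$.

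This is where hypotheses (a) and (b) enter. Under (b), the ratio $k^+_{pl}/k^-_{pl}$ is $l$-independent, so it pulls out of both sums and $R_p$ collapses to a constant in $x$. Under (a), the orthogonality identity $(\nu^j_{pl_1}-\nu^j_{pl_2})\omega_q^j = 0$ forces, for each coordinate $j$, either $\omega_q^j = 0$ (so the $j$-th factor of $A^{(p)}_l$ is unaffected by the shift $x \to x+\omega_q/V$) or $\nu^j_{pl}$ independent of $l$ (so the $j$-th factor of $A^{(p)}_l(x+\omega_q/V)$ differs from that of $A^{(p)}_l(x)$ by a ratio independent of $l$). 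In either case, $A^{(p)}_l(x+\omega_q/V)/A^{(p)}_l(x)$ is a common multiplier that pulls out of the numerator and denominator of $R_p$ and cancels, yielding $R_p(x+\omega_q/V) = R_p(x)$. Since each $\omega_p$ independently satisfies (a) or (b), the parallelogram identity for $\beta_p$ follows in all four combinations of the two hypotheses.

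The step I expect to be most delicate is not any individual calculation but the path-independence argument in the first paragraph: because $E_V$ has a boundary at the coordinate hyperplanes, some would-be elementary parallelograms could lie partly outside $E_V$. I would handle this by noting that each connected component is the set of lattice points in a convex polytope of $\mathbb{R}^r$ (the preimage of $\mathbb{R}^d_{\geq 0}$ under the affine parametrization $(n_1,\ldots,n_r) \mapsto y_0 + \sum_p n_p\omega_p/V$), and for such convex lattice domains the cycle space of the nearest-neighbor graph is still generated by unit squares lying entirely inside the domain. Once this is combined with the parallelogram identity proved above, Kolmogorov's cycle condition follows and hence so does stochastic detailed balance.
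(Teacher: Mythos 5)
Your algebraic core is sound and in fact coincides with the paper's key lemma: the factorization $\beta_p(x)=\Omega_p(x)R_p(x)$, the telescoping of the combinatorial factors, and the invariance of $R_p$ under shifts by $\omega_q$ ($q\neq p$) via hypotheses (a)/(b) is exactly the content of the paper's Lemma 6.1 (there stated as the invariance of $\frac{q^V_{x,x+\omega_p/V}}{q^V_{x+\omega_p/V,x}}\cdot\frac{(Vx+\omega_p)!}{(Vx)!}$ under $x\mapsto x+\sum_{q\neq p}\xi_q\omega_q/V$). However, your reduction of Kolmogorov's criterion to elementary parallelogram cycles has a genuine gap, and it is precisely the step you flagged as delicate. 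The transition graph on $E_V$ is \emph{not} the nearest-neighbor graph on the lattice points of a convex polytope: an $\omega_p$-edge out of $x$ exists only when $Vx\geq \nu_{pl}$ componentwise for some $l$, because the falling factorials in $q^V_{x,x+\omega_p/V}$ vanish otherwise. For instance, $2S_1\rightleftharpoons 3S_1$ contributes no edge between the states with $1$ and $2$ molecules even though both lie in $E_V$. So the graph whose cycle space you must generate is a proper subgraph of the grid graph, with edges missing according to which reactions can fire, and your convexity argument (cycle space of a convex lattice domain generated by interior unit squares) addresses the wrong graph. You give no argument that every cycle of \emph{positive-rate} transitions decomposes into parallelograms all of whose edges have positive rates, and this is not obvious; it is the crux of the problem, not a boundary technicality.

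The paper avoids this issue entirely by a different combinatorial step: given an arbitrary cycle $\eta_1\rightarrow\cdots\rightarrow\eta_L\rightarrow\eta_1$ with positive rates, linear independence of $\omega_1,\dots,\omega_r$ forces the $+\omega_p$ and $-\omega_p$ transitions to occur in equal numbers; projecting onto the $\omega_p$-direction and matching transitions ``floor by floor'' pairs each $+\omega_p$ step at $x$ with a $-\omega_p$ step at $\tilde x+\omega_p/V$ where $\tilde x-x$ is an integer combination of the $\omega_q$, $q\neq p$. The invariance lemma is then applied directly to the two paired transitions, which are known to have positive rates because they belong to the given cycle; no intermediate parallelogram edges ever need to exist. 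A secondary point: your ``common multiplier pulls out and cancels'' step tacitly divides by $A^{(p)}_l(x)$, which may vanish for some $l$; the paper handles this by showing, under the orthogonality condition and positivity of the two rates involved, that the set of reactions in $\mathcal{R}^+_p$ that can fire is the same at $x$ and at $\tilde x$, so the nonvanishing terms match up. To repair your proof you would either need to prove the generation claim for the positive-rate graph (unproven and nontrivial) or switch to a pairing argument of the paper's type, which works for arbitrary cycles.
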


\begin{proof}
	The proof of this theorem will be given in Section \ref{result4proof}.
\end{proof}

From Theorem \ref{joshi}, if the reaction vectors $\omega_1,\cdots,\omega_r$ are linearly independent, then \eqref{joshia} holds trivially and hence the reaction network satisfies deterministic detailed balance if and only if \eqref{joshib} is satisfied for all $1\leq p\leq r$.

We next use Theorem \ref{result4} to construct more examples of chemical reaction networks that satisfy stochastic detailed balance but violate deterministic detailed balance.

\begin{example}\label{bistable}
	Consider the following chemical reaction system:
	\begin{equation*}
	\varnothing \autorightleftharpoons{$k^+_{11}$}{$k^-_{11}$}S_1,\;\;\;
	S_1\autorightleftharpoons{$k^+_{12}$}{$k^-_{12}$} 2S_1,\;\;\; S_2\autorightleftharpoons{$k^-_{21}$}{$k^+_{21}$} S_1,\;\;\;
	3S_2\autorightleftharpoons{$k^-_{22}$}{$k^+_{22}$} S_1+2S_2.
	\end{equation*}
	By definition, the forward reactions are given by
	\begin{equation*}
	\varnothing\xrightarrow{k^+_{11}}S_1,\;\;\;
	S_1\xrightarrow{k^+_{12}}2S_1,\;\;\;
	S_2\xrightarrow{k^-_{21}}S_1,\;\;\;
	3S_2\xrightarrow{k^-_{22}}S_1+2S_2,
	\end{equation*}
	and the backward reactions are given by
	\begin{equation*}
	S_1\xrightarrow{k^-_{11}}\varnothing,\;\;\;
	2S_1\xrightarrow{k^-_{12}}S_1,\;\;\;
	S_1\xrightarrow{k^+_{21}}S_2,\;\;\;
	S_1+2S_2\xrightarrow{k^+_{22}}3S_2.
	\end{equation*}
	The first two forward reactions have the same reaction vector $\omega_1=(1,0)$  and the last two forward reactions have the same reaction vector $\omega_2=(1,-1)$. The multiplicities of the two reaction vectors  are given by $r_1 = r_2=2$, respectively, and the two reaction vectors are linearly independent. Moreover, we have
	\begin{gather*}
	\nu_{11} = (0,0),\;\;\;\nu'_{11} = (1,0),\;\;\;\nu_{12} = (1,0),\;\;\;\nu'_{12} = (2,0),\\
	\nu_{21} = (0,1),\;\;\;\nu'_{21} = (1,0),\;\;\;\nu_{22} = (0,3),\;\;\;\nu'_{22} = (1,2).
	\end{gather*}
	It is easy to check that $(\nu_{22}^j-\nu^j_{21})\omega_1^j=0$ for $j=1,2$. This shows that the orthogonality condition is satisfied for the reaction vector $\omega_2$. By Theorem \ref{result4}, the reaction network satisfies stochastic detailed balance if the condition (b) holds for $p = 1$, namely $k^+_{11}/k^-_{11}=k^+_{12}/k^-_{12}$. Thus, if $k^+_{11}/k^-_{11}=k^+_{12}/k^-_{12}$ but $k^+_{21}/k^-_{21}\neq k^+_{22}/k^-_{22}$, then the system satisfies stochastic detailed balance but violates deterministic detailed balance.
\end{example}


\section{Global potential for chemical reaction networks}
In this section, we investigate the quasi-potential of chemical reaction networks, which plays a central role in the Freidlin-Wentzell-type metastability theory. The quasi-potential defined in \eqref{quasipotential} has two important features: (i) it is locally defined within each basin of attraction and in general cannot be globally defined over the whole space and (ii) it is defined in the variational form which is usually too complicated to be computed explicitly. The following theorem shows that, under the condition of local detailed balance, the quasi-potential not only can be defined globally over the whole space but also has an explicit and concise expression.

In the following, we always assume that the stochastic model $X^V$ starts from some $x^V_0\in E_V$ which satisfies $x^V_0\rightarrow x_0\in\mathbb{R}^d_{\geq0}$ as $V\rightarrow\infty$. Recall that the critical points of a function $U\in C^1(\mathbb{R}^d_{>0})$ are those points in $\mathbb{R}^d_{>0}$ at which $\nabla U$ vanish.

\begin{theorem}\label{result2}
	Suppose that a reaction network satisfies local detailed balance. Let $\{\omega_{i_1},\cdots,\omega_{i_m}\}$ be an arbitrary basis of ${\rm span}(V(\mathcal{R}))$ and let $M = (\omega_{i_1}^T,\cdots,\omega^T_{i_m})$ be a $d\times m$ matrix, where $m$ is the dimension of ${\rm span}(V(\mathcal{R}))$. Let $F:\mathbb{R}^d_{>0}\rightarrow\mathbb{R}^d$ be a vector field defined as
	\begin{equation}\label{vectorfield}
	F(x) = \left(\log\frac{f^+_{i_1}(x)}{f^-_{i_1}(x)},\cdots,\log \frac{f^+_{i_m}(x)}{f^-_{i_m}(x)}\right)(M^TM)^{-1}M^T.
	\end{equation}
	Then the following five statements hold:\\
	(a) The definition of the vector field $F$ is independent of the choice of the basis of ${\rm span}(V(\mathcal{R}))$. In addition, for any $1\leq p\leq r$, we have
	\begin{equation}\label{crucial}
	F(x)\cdot\omega_p =\log\frac{f^+_p(x)}{f^-_p(x)},\;\;\;x\in\mathbb{R}^d_{>0}.
	\end{equation}
	(b) The vector field $F$ has a potential function $U\in C^\infty(\mathbb{R}^d_{>0})$, namely
	\begin{equation}\label{gradient}
	F(x) = -\nabla U(x),\;\;\;x\in\mathbb{R}^d_{>0}\cap(x_0+\mathrm{span}(V(\mathcal{R}))).
	\end{equation}
	(c) The potential function $U$ satisfies
	\begin{equation*}
	\frac{{\rm d}}{{\rm d}t}U(x(t))\leq 0,\;\;\;t\geq 0,
	\end{equation*}
	where $x = x(t)$ is the solution of the deterministic model \eqref{deterministic}, and the equality holds if and only if the
	deterministic model starts from any one of its equilibrium points.\\
	(d) The critical points of $U$ within $\mathbb{R}^d_{>0}\cap(x_0+\mathrm{span}(V(\mathcal{R})))$ are also the equilibrium points of the deterministic model \eqref{deterministic}.\\
	(e) Let $c\in\mathbb{R}^d_{>0}\cap(x_0+\mathrm{span}(V(\mathcal{R})))$ be an equilibrium point of the deterministic model \eqref{deterministic}. If $y\in\mathbb{R}^d_{>0}$ is attracted to $c$ for the deterministic model \eqref{deterministic}, then
	\begin{equation}\label{potentialdifference}
	W(c,y) = U(y)-U(c),
	\end{equation}
	where $W$ is the quasi-potential defined in \eqref{quasipotential}.
\end{theorem}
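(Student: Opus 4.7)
My plan is to treat the five parts in sequence, using the two halves of local detailed balance where each is needed. For (a), observe that $M^T\omega_{i_j}^T$ is the $j$th column of $M^TM$, so $(M^TM)^{-1}M^T\omega_{i_j}^T=e_j$ and therefore $F(x)\cdot\omega_{i_j}=\log(f^+_{i_j}(x)/f^-_{i_j}(x))$ for each basis vector. For an arbitrary $\omega_p$, write $\omega_p=\sum_j c_j\omega_{i_j}$ with rational $c_j$, clear denominators to obtain an integer linear relation, and invoke zero-order local detailed balance \eqref{continuouscyclecondition} to conclude $F(x)\cdot\omega_p=\log(f^+_p(x)/f^-_p(x))$. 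Basis independence is then automatic: $F(x)$ lies in $\mathrm{span}(V(\mathcal{R}))$ by construction (being a linear combination of the rows of $M^T$) and is determined there by its pairings with the spanning set $\{\omega_p\}$.

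For (b), rewrite first-order local detailed balance \eqref{cparallelogram} via (a) as $\omega_q\cdot\nabla(F(x)\cdot\omega_p)=\omega_p\cdot\nabla(F(x)\cdot\omega_q)$ for all $p,q$, which is precisely the closedness on the affine slice of the tangential $1$-form $F\cdot\mathrm{d}x$. Since $\Rnum^d_{>0}\cap(x_0+\mathrm{span}(V(\mathcal{R})))$ is convex and hence simply connected in its relative topology, a Poincar\'e-type line integration of $-F$ within the slice produces a smooth $U$ satisfying $-\nabla U=F$ in all tangential directions; I then extend $U$ smoothly to $\Rnum^d_{>0}$ by declaring it constant along $\mathrm{span}(V(\mathcal{R}))^\perp$. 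Parts (c) and (d) follow quickly from (a): by the chain rule, $\frac{\mathrm{d}}{\mathrm{d}t}U(x(t))=-F(x)\cdot\dot x(t)=-\sum_p(f^+_p-f^-_p)\log(f^+_p/f^-_p)\leq 0$, with equality iff $f^+_p(x)=f^-_p(x)$ for every $p$, which is the equilibrium condition for \eqref{deterministic}; and a critical point of $U$ on the slice forces $\omega_p\cdot\nabla U=-F(x)\cdot\omega_p=0$ for every $p$, again meaning $x$ is an equilibrium.

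The main work is in (e), and the pivotal identity is $H(x,\nabla U(x))=0$. Since $\omega_p\cdot\nabla U=-\log(f^+_p/f^-_p)$, we have $e^{\omega_p\cdot\nabla U}=f^-_p/f^+_p$ and $e^{-\omega_p\cdot\nabla U}=f^+_p/f^-_p$, and each summand of \eqref{hamiltonian} collapses to $(f^-_p-f^+_p)+(f^+_p-f^-_p)=0$. The Legendre--Fenchel inequality \eqref{localrate} taken at $\theta=\nabla U(x)$ then yields $L(x,y)\geq\nabla U(x)\cdot y$, so for any absolutely continuous $\phi$ with $\phi(0)=c$ and $\phi(T)=y$ we have $I_{c,T}(\phi)\geq\int_0^T\nabla U(\phi)\cdot\dot\phi\,\mathrm{d}t=U(y)-U(c)$; taking the infimum gives the lower bound $W(c,y)\geq U(y)-U(c)$.

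For the matching upper bound I will use the time-reversed deterministic trajectory. Differentiating $H$ in $\theta$ at $\theta=\nabla U$ shows that the Legendre maximizer is $\dot\phi=\sum_p(f^-_p-f^+_p)\omega_p$, i.e., the negative of the deterministic vector field, so equality in Legendre--Fenchel is attained pointwise along any solution of $\dot\phi=-\sum_p(f^+_p(\phi)-f^-_p(\phi))\omega_p$. Let $\psi$ solve \eqref{deterministic} with $\psi(0)=y$, so that $\psi(t)\to c$ by the attraction hypothesis, and set $\phi_T(t)=\psi(T-t)$ on $[0,T]$. Then $I_{\phi_T(0),T}(\phi_T)=U(y)-U(\psi(T))$. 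The principal obstacle is the boundary mismatch $\phi_T(0)=\psi(T)\neq c$ for finite $T$; I plan to close it by prepending a short connector from $c$ to $\psi(T)$ whose action can be made smaller than any $\varepsilon>0$ once $T$ is large, exploiting joint continuity of $W$ together with $W(c,c)=0$. Sending $T\to\infty$ and then $\varepsilon\to0$ yields $W(c,y)\leq U(y)-U(c)$, completing (e).
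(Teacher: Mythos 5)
Your parts (a)--(d) follow the paper's own proof in all essentials: rationality of the coordinates of each $\omega_p$ in the chosen (integer) basis plus zero-order local detailed balance gives \eqref{crucial} as in Lemma \ref{le10}; first-order local detailed balance is read as closedness of the tangential $1$-form on the convex affine slice and the Poincar\'e lemma yields $U$, exactly as in Lemma \ref{le3}; and your (c)--(d) are the computation of Lemma \ref{le4}. (One cosmetic remark: extending $U$ by constancy along $\mathrm{span}(V(\mathcal{R}))^\perp$ only defines it on the tube over the slice, which need not exhaust $\Rnum^d_{>0}$; the paper instead extends the parametrized potential from $E\subset\Rnum^m$ to $\Rnum^m$. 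Nothing downstream uses $U$ off the slice, so this is immaterial.) Where you genuinely diverge is (e). The paper first reduces the Legendre--Fenchel supremum to $\Rnum^m$ and proves existence and uniqueness of the maximizer (Lemma \ref{2}), then derives the pathwise time-reversal identity $I_{y,T}(\phi^-)-I_{x_0,T}(\phi)=U(x_0)-U(y)$ (Lemma \ref{substract}) by relating the maximizers for $\pm\dot\phi$, and uses this identity for the lower bound and, together with an explicit straight-line connector whose action is controlled by an explicit bound on $L$ near $c$, for the upper bound (Lemma \ref{le2}). You instead work directly from the Hamilton--Jacobi identity $H(x,\nabla U(x))=0$ on the slice: the Legendre--Fenchel inequality at $\theta=\nabla U$ gives $L(x,y)\geq\nabla U(x)\cdot y$ and hence $W(c,y)\geq U(y)-U(c)$, while the computation $\nabla_\theta H(x,\nabla U(x))=\sum_{p}\left(f^-_p(x)-f^+_p(x)\right)\omega_p$ shows the time-reversed deterministic flow attains equality, so $I(\phi_T)=U(y)-U(\psi(T))$; you then close the endpoint mismatch near $c$ by joint continuity of $W$ and $W(c,c)=0$ rather than by the paper's explicit connector estimate. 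Both arguments are correct (and both implicitly restrict the infimum to paths staying in the open orthant, where $U$ and $F$ are defined). Your route is the classical Freidlin--Wentzell argument and is shorter, bypassing Lemmas \ref{2} and \ref{substract} entirely; the paper's route buys the stronger time-reversal identity valid for arbitrary slice paths, which is of independent interest, and a self-contained quantitative connector bound that does not lean on the continuity of the quasi-potential near the equilibrium (the paper does assert that continuity in Section 2, so your appeal to it is legitimate within this paper).
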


\begin{proof}
	The proof of this theorem will be given in Section \ref{result2proof}.
\end{proof}

It is easy to see that if two potential functions $U_1,U_2\in C^\infty(\mathbb{R}^d_{>0})$ both satisfy \eqref{gradient}, namely
\begin{equation*}
F(x) = -\nabla U_1(x) = -\nabla U_2(x),\;\;\;x\in\mathbb{R}^d_{>0}\cap(x_0+\mathrm{span}(V(\mathcal{R}))),
\end{equation*}
then $U_1$ and $U_2$ must coincide with each other (up to a constant) on $\mathbb{R}^d_{>0}\cap(x_0+\mathrm{span}(V(\mathcal{R})))$.

\begin{definition}
	The potential function $U$ introduced in Theorem \ref{result2} is called the \emph{global potential} of a reaction network.
\end{definition}

\begin{remark}
	In the classic Freidlin-Wentzell theory for randomly perturbed dynamical systems \cite{freidlin1998random}, it was shown that when detailed balance is satisfied, the system has a global potential that can be computed explicitly \cite[Chapter 4, Theorem 3.1]{freidlin1998random}. The above theorem is actually the counterpart of this result for stochastic reaction networks. It shows that under the condition of local detailed balance, we are able to construct a global potential of the reaction network, which is exactly the same as the quasi-potential (up to a constant) within each basin of attraction.
\end{remark}

\begin{remark}
	Combining Theorems \ref{result1} and \ref{result2}, we can see that stochastic detailed balance ensures the existence of a global potential. From the deterministic perspective, the vector field $F$ defined in \eqref{vectorfield} can be understood as the chemical force of the deterministic model. Recall that $F$ has a potential function if and only if the line integral of $F$ (the work exerted by the chemical force) along any smooth closed curve is zero. Similarly, from the stochastic perspective, the chemical force of the stochastic model between any two states $x,y\in E_V$ is defined as $\log q^V_{x,y}/q^V_{y,x}$ \cite[Theorem 2.5]{Zhang2012Stochastic}. Then the Kolmogorov cycle condition guarantees that the work exerted by the chemical force along any cycle is zero, namely
	\begin{equation*}
	\log\frac{q^V_{x_1,x_2}}{q^V_{x_2 ,x_1}}+\log\frac{q^V_{x_2,x_3}}{q^V_{x_3,x_2}}+\cdots
	+\log\frac{q^V_{x_n,x_1}}{q^V_{x_1,x_n}} = 0
	\end{equation*}
	for any cycle $x_1\rightarrow x_2\rightarrow\cdots\rightarrow x_n\rightarrow x_1$. This intuitively explains why stochastic detailed balance implies the existence of a global potential.
\end{remark}

The following corollary shows that the global potential $U$ satisfies the \emph{time-independent Hamilton-Jacobi equation}.

\begin{corollary}
	Suppose that a reaction network satisfies local detailed balance. Then we have
	\begin{equation*}
	H(x,-F(x)) = 0,\;\;\;x\in\mathbb{R}^d_{>0},
	\end{equation*}
	where $H(x,\theta)$ is the Hamiltonian defined in \eqref{hamiltonian} and $F(x)$ is the vector field defined in $\eqref{vectorfield}$. In particular, we have
	\begin{equation*}
	H(x,\nabla U(x)) = 0,\;\;\;x\in\mathbb{R}^d_{>0}\cap(x_0+\mathrm{span}(V(\mathcal{R}))),
	\end{equation*}
	where $U(x)$ is the global potential introduced in Theorem \ref{result2}.
\end{corollary}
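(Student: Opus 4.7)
The plan is to prove $H(x,-F(x))=0$ by direct substitution, using identity \eqref{crucial} from part (a) of Theorem \ref{result2}, and then obtain the second assertion as an immediate corollary via the gradient relation \eqref{gradient} from part (b).

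For the first step, fix $x\in\Rnum^d_{>0}$ and set $\theta=-F(x)$. By \eqref{crucial}, for each $1\leq p\leq r$ we have $\omega_p\cdot\theta=-\log\bigl(f_p^+(x)/f_p^-(x)\bigr)=\log\bigl(f_p^-(x)/f_p^+(x)\bigr)$. Exponentiating gives
\begin{equation*}
e^{\omega_p\cdot\theta}=\frac{f_p^-(x)}{f_p^+(x)},\;\;\;e^{-\omega_p\cdot\theta}=\frac{f_p^+(x)}{f_p^-(x)}.
\end{equation*}
Substituting these into the Hamiltonian \eqref{hamiltonian}, the $p$-th summand becomes
\begin{equation*}
f_p^+(x)\Bigl(\tfrac{f_p^-(x)}{f_p^+(x)}-1\Bigr)+f_p^-(x)\Bigl(\tfrac{f_p^+(x)}{f_p^-(x)}-1\Bigr)
=\bigl(f_p^-(x)-f_p^+(x)\bigr)+\bigl(f_p^+(x)-f_p^-(x)\bigr)=0.
\end{equation*}
Summing over $p$ yields $H(x,-F(x))=0$, which is the first identity.

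For the second step, restrict $x$ to $\Rnum^d_{>0}\cap(x_0+\mathrm{span}(V(\mathcal{R})))$. By \eqref{gradient}, $F(x)=-\nabla U(x)$, hence $-F(x)=\nabla U(x)$. Plugging this into the identity just established gives $H(x,\nabla U(x))=0$, proving the Hamilton-Jacobi equation.

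There is essentially no obstacle: the content of the corollary is entirely encoded in the key identity \eqref{crucial}, which already packages the definition of $F$ together with local detailed balance. The only point worth flagging is that \eqref{crucial} must hold for every reaction vector $\omega_p$, not merely for those in the chosen basis $\{\omega_{i_1},\dots,\omega_{i_m}\}$; this extension is exactly the content of part (a) of Theorem \ref{result2} and is what makes the telescoping cancellation above work uniformly across all $p$.
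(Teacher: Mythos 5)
Your proof is correct and follows essentially the same route as the paper: substitute $\theta=-F(x)$ into the Hamiltonian, use the identity $F(x)\cdot\omega_p=\log\bigl(f_p^+(x)/f_p^-(x)\bigr)$ from Theorem \ref{result2}(a) to cancel each summand, then invoke $F=-\nabla U$ from Theorem \ref{result2}(b) for the Hamilton--Jacobi form. Your closing remark that \eqref{crucial} must hold for all $\omega_p$, not just the basis vectors, correctly identifies the role of part (a), exactly as in the paper's argument.
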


\begin{proof}
	Since the network satisfies local detailed balance, it follows from Theorem \ref{result2}(a) that
	\begin{equation*}
	\begin{aligned}
	H(x,-F(x))&=\sum_{p=1}^{r}\left[f^+_p (x)\left(e^{-\omega_p\cdot F(x)}-1\right)
	+f^-_p (x)\left(e^{\omega_p\cdot F(x)}-1\right)\right]\\
	&=\sum_{p=1}^{r}\Big[f^+_p(x)\Big(e^{-\log \frac{f^+_p(x)}{f^-_p(x)}}-1\Big)+f^-_p(x)\Big(e^{\log \frac{f^+_p(x)}{f^-_p(x)}}-1\Big)\Big]\\
	&=\sum_{p=1}^{r}\left[f^-_p(x)-f^+_p(x)+f^+_p(x)-f^-_p(x)\right] = 0.
	\end{aligned}
	\end{equation*}
	The rest of the proof follows immediately from Theorem \ref{result2}(b).
\end{proof}

If a reaction network satisfies deterministic detailed balance, then any detailed balanced equilibrium point $c\in\mathbb{R}^d_{>0}$ of the deterministic model \eqref{deterministic} must also be complex balanced (see \cite{anderson2010product} for the detailed definition of this concept). Every complex balanced equilibrium point $c=(c_1,\dots,c_d)\in \mathbb{R}^d_{>0}$ can be used to construct a similar potential
\begin{equation}\label{complex}
\tilde{U}(x)=\sum_{j=1}^d \left[x_j\left(\log x_j-\log c_j-1\right)+c_j\right],\;\;\;
x = (x_1,\dots,x_d)\in\mathbb{R}^d_{>0},
\end{equation}
which turns out to be a Lyapunov function of the deterministic model \cite{horn1972general, anderson2015lyapunov}. The following theorem reveals the relationship between the global potential $U$ introduced in Theorem \ref{result2} and the potential $\tilde{U}$ defined in \eqref{complex}.

\begin{theorem}\label{hatu}
	Suppose that a reaction network satisfies deterministic detailed balance with detailed balanced equilibrium point $c\in\mathbb{R}^d_{>0}$. Let $U$ be the global potential of the reaction network and let $\tilde{U}$ be the Lyapunov function defined in \eqref{complex}. Then $U$ coincides with $\tilde{U}$ (up to a constant) on $\mathbb{R}^d_{>0}\cap (x_0+{\rm span}(V(\mathcal{R})))$.
\end{theorem}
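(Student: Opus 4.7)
\textbf{Proof proposal for Theorem \ref{hatu}.}

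The plan is to show that on the affine slice $S := \Rnum^d_{>0}\cap(x_0+\mathrm{span}(V(\mathcal{R})))$, the directional derivatives of $U$ and $\tilde{U}$ agree along every reaction vector $\omega_p$. Because the $\omega_p$ span the tangent space of $S$, this will force $\nabla(U-\tilde U)$ to be orthogonal to $S$; combined with the fact that $S$ is convex (hence connected) as the intersection of an open convex set with an affine subspace, this gives $U-\tilde U\equiv\mathrm{const}$ on $S$, which is exactly the conclusion. So the entire proof reduces to computing these directional derivatives and matching them.

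The key calculation is to extract from deterministic detailed balance a clean formula for $f_p^+/f_p^-$. The hypothesis $k^+_{pl}c^{\nu_{pl}} = k^-_{pl}c^{\nu'_{pl}}$ is equivalent to $k^+_{pl}/k^-_{pl} = c^{\omega_p}$ for all $1\leq l\leq r_p$; in particular this ratio is independent of $l$. Substituting into \eqref{definitionf} and using $x^{\nu'_{pl}} = x^{\nu_{pl}}x^{\omega_p}$, I get
\begin{equation*}
f_p^-(x) = \sum_{l=1}^{r_p}k^-_{pl}x^{\nu'_{pl}} = \left(\frac{x}{c}\right)^{\omega_p}\sum_{l=1}^{r_p}k^+_{pl}x^{\nu_{pl}} = \left(\frac{x}{c}\right)^{\omega_p}f_p^+(x),
\end{equation*}
so that
\begin{equation*}
\log\frac{f_p^+(x)}{f_p^-(x)} = -\omega_p\cdot\log(x/c),\quad x\in\Rnum^d_{>0},
\end{equation*}
where $\log(x/c)=(\log(x_1/c_1),\dots,\log(x_d/c_d))$.

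Now I would combine this with Theorem \ref{result2}. Differentiating \eqref{complex} directly gives $\nabla\tilde U(x) = \log(x/c)$. Meanwhile, property \eqref{crucial} in Theorem \ref{result2}(a) says $\omega_p\cdot F(x) = \log(f_p^+(x)/f_p^-(x))$ for every $p$. Putting the three identities together yields, for each $1\leq p\leq r$ and each $x\in\Rnum^d_{>0}$,
\begin{equation*}
\omega_p\cdot F(x) = -\omega_p\cdot\nabla\tilde U(x).
\end{equation*}
On the slice $S$, Theorem \ref{result2}(b) identifies $F(x) = -\nabla U(x)$, hence $\omega_p\cdot\nabla(U-\tilde U)(x) = 0$ for every $p$ and every $x\in S$. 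Since $V(\mathcal{R})=\{\omega_1,\dots,\omega_r\}$ spans $\mathrm{span}(V(\mathcal{R}))$, the orthogonal projection of $\nabla(U-\tilde U)$ onto the tangent space of $S$ vanishes on $S$, and the connectedness of $S$ completes the argument.

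There is no real obstacle here; the entire proof hinges on the single substitution $k^-_{pl} = k^+_{pl}/c^{\omega_p}$, which collapses the messy ratio $f_p^+/f_p^-$ to the tidy form $(c/x)^{\omega_p}$. The only thing to be careful about is that $U$ is only tied to $F$ through the relation $-\nabla U = F$ on $S$ (not on all of $\Rnum^d_{>0}$), which is precisely why the conclusion is restricted to $\Rnum^d_{>0}\cap(x_0+\mathrm{span}(V(\mathcal{R})))$ in the statement of the theorem.
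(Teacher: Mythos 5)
Your proposal is correct and follows essentially the same route as the paper: deterministic detailed balance collapses $\log(f_p^+/f_p^-)$ to $-\omega_p\cdot\log(x/c)$, which matches $-\omega_p\cdot\nabla\tilde U$, so via \eqref{crucial} and \eqref{gradient} the directional derivatives of $U$ and $\tilde U$ along every $\omega_p$ agree on the slice, and integrating along a curve contained in $x_0+\mathrm{span}(V(\mathcal{R}))$ gives the constancy of $U-\tilde U$. Your explicit derivation of $f_p^-(x)=(x/c)^{\omega_p}f_p^+(x)$ for $r_p>1$ (rather than quoting the no-equivalent-reactions formula) and your convexity remark for connectedness are only minor presentational refinements of the paper's argument.
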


\begin{proof}
	Since $c\in\mathbb{R}^d_{>0}$ is a detailed balanced equilibrium point, it follows from \eqref{detailed} that
	\begin{equation*}
	\log\frac{k^+_{pl}}{k^-_{pl}} = \log c\cdot\omega_p,\;\;\;1\leq p\leq r,\;1\leq l\leq r_p,
	\end{equation*}
	where $\log c=(\log c_1,\dots,\log c_d)$. This shows that for any $x\in\mathbb{R}^d_{>0}$,
	\begin{equation*}
	\nabla \tilde{U}(x)\cdot\omega_p = (\log x-\log c)\cdot\omega_p
	= \log x\cdot\omega_p-\log\frac{k^+_{p1}}{k^-_{p1}}.
	\end{equation*}
	Moreover, it follows from Theorem \ref{result2}(a),(b) and \eqref{noequivalent} that for any $x\in\mathbb{R}^d_{>0}\cap (x_0+{\rm span}(V(\mathcal{R})))$,
	\begin{equation*}
	\nabla U(x)\cdot\omega_p = -F(x)\cdot\omega_p = \log\frac{f^-_p(x)}{f^+_p(x)}
	= \log x\cdot\omega_p-\log\frac{k^+_{p1}}{k^-_{p1}}.
	\end{equation*}
	Combining the above two equations yields
	\begin{equation*}
	\nabla \tilde{U}(x)\cdot\omega_p = \nabla U(x)\cdot\omega_p,\;\;\;x\in\mathbb{R}^d_{>0}\cap (x_0+{\rm span}(V(\mathcal{R}))).
	\end{equation*}
	Then the function $H=\tilde{U}-U$ must satisfy
	\begin{equation*}
	\nabla H(x)\cdot z = 0,\;\;\;x\in\mathbb{R}^d_{>0}\cap (x_0+{\rm span}(V(\mathcal{R}))),\;
	z\in{\rm span}(V(\mathcal{R})).
	\end{equation*}
	For any $x,y\in\mathbb{R}^d_{>0}\cap (x_0+{\rm span}(V(\mathcal{R})))$, let $\phi:[0,1]\rightarrow \mathbb{R}^d_{>0}$ be an arbitrary smooth curve satisfying
	\begin{equation*}
	\phi(0)=x,\;\;\;\phi(1)=y,\;\;\;\phi(\cdot)\in x_0+\mathrm{span}(V(\mathcal{R})).
	\end{equation*}
	Then we have
	\begin{equation*}
	H(y)-H(x)=\int_0^1\nabla H(\phi(t))\cdot\dot{\phi}(t){\rm d}t = 0,
	\end{equation*}
	where we have used the fact that $\dot{\phi}(\cdot)\in {\rm span}(V(\mathcal{R}))$. This implies the desired result.
\end{proof}

From the above theorem, $U$ must coincide with $\tilde{U}$ on $\mathbb{R}^d_{>0}\cap (x_0+{\rm span}(V(\mathcal{R})))$ if the reaction network satisfies deterministic detailed balance. Moreover, Theorem \ref{result2}(b) shows that $U$ is the potential function of the vector field $F$. However, the following counterexample shows that $\tilde{U}$ is not necessarily the potential function of the vector field $F$, even the reaction network satisfies deterministic detailed balance.

\begin{example}
	Consider the following chemical reaction system:
	\begin{equation*}
	S_2{\autorightleftharpoons{$k^+$}{$k^-$}} S_1.
	\end{equation*}
	Clearly, the system satisfies deterministic detailed balance and $V(\mathcal{R})) = \{(1,-1)\}$. It is easy to check that the vector field $F$ is given by
	\begin{equation}\label{fexample}
	F(x_1,x_2)=\frac{1}{2}\log\frac{k^+x_2}{k^-x_1}(1,-1).
	\end{equation}
	Suppose that $x_0 = (1,0)$. Then there is a unique detailed balanced equilibrium point
	\begin{equation*}
	c=\left(\frac{k^+}{k^++k^-},\frac{k^-}{k^++k^-}\right)\in\mathbb{R}^2_{>0}\cap (x_0+{\rm span}(V(\mathcal{R}))).
	\end{equation*}
	Then the Lyapunov function $\tilde{U}$ associated with the equilibrium point $c$ is given by
	\begin{equation*}
	\tilde{U}(x_1,x_2) = x_1\left(\log x_1-\log\frac{k^+}{k^++k^-}-1\right)
	+x_2\left(\log x_2-\log\frac{k^-}{k^++k^-}-1\right)+1.
	\end{equation*}
	This shows that
	\begin{equation*}
	\nabla\tilde{U}(x_1,x_2) = \left(\log x_1-\log\frac{k^+}{k^++k^-},\log x_2-\log\frac{k^-}{k^++k^-}\right).
	\end{equation*}
	It is easy to check that $-\nabla\tilde{U}\neq F$ on $\mathbb{R}^2_{>0}\cap (x_0+{\rm span}(V(\mathcal{R})))$ unless $(x_1,x_2) = c$.
\end{example}

We next give an example showing the application of the results in this section.

\begin{example}
	We revisit the chemical reaction system given in Example \ref{bistable}. Recall that the system satisfies stochastic detailed balance when $k^+_{11}/k^-_{11}=k^+_{12}/k^-_{12}$. Here we assume that this condition is satisfied. Under this condition, it follows from Theorem \ref{result1} that the system also satisfies local detailed balance. Note that the reaction vectors in $V(\mathcal{R})$ are given by $\omega_1 = (1,0)$ and $\omega_2 = (1,-1)$, which are linearly independent. Therefore, the matrix $M$ is given by
	\begin{equation*}
	M = \begin{pmatrix} 1 & 1\\ 0 & -1 \end{pmatrix}=(M^TM)^{-1}M^T,
	\end{equation*}
	and thus the vector field $F$ is given by
	\begin{equation*}
	\begin{aligned}
	F(x) &= \left(\log\frac{f^+_1(x)}{f^-_1(x)},\log\frac{f^+_2(x)}{f^-_2(x)}\right)(M^TM)^{-1}M^T\\
	&=\left(\log\frac{k^+_{11}+k^+_{12}x_1}{k^-_{11}x_1+k^-_{12}x_1^2},\log\frac{k^-_{21}x_2+k^-_{22}x_2^3}{k^+_{21}x_1+k^+_{22}x_1x^2_2}\right)
	\begin{pmatrix} 1 & 1\\ 0 & -1 \end{pmatrix}\\
	&=\left(\log\frac{k^+_{11}}{k^-_{11}x_1},
	\log\frac{k^+_{11}(k^+_{21}+k^+_{22}x^2_2)}{k^-_{11}(k^-_{21}x_2+k^-_{22}x_2^3)}\right),
	\end{aligned}
	\end{equation*}
	where we have used the condition $k^+_{11}/k^-_{11}=k^+_{12}/k^-_{12}$. It then follows from Theorem \ref{result2}(b) that the system has a global potential which satisfies
	\begin{equation*}
	\begin{aligned}
	-\left(\frac{\partial U}{\partial x_1},\frac{\partial U}{\partial x_2}\right)
	= \left(\log\frac{k^+_{11}}{k^-_{11}x_1},
	\log\frac{k^+_{11}(k^+_{21}+k^+_{22}x^2_2)}{k^-_{11}(k^-_{21}x_2+k^-_{22}x_2^3)}\right).
	\end{aligned}
	\end{equation*}
	Integrating the above equation gives the following explicit expression of the global potential:
	\begin{equation*}
	\begin{aligned}
	U(x_1,x_2) =&\; x_1\left(\log\frac{k^-_{11}x_1}{k^+_{11}}-1\right)+x_2\left(\log \frac{k^-_{11}x_2}{k^+_{11}}-1\right)+x_2\log\frac{k^-_{22}x_2^2+k^-_{21}}{k^+_{22}x_2^2+k^+_{21}}\\
	&\;+2\sqrt{\frac{k^-_{21}}{k^-_{22}}}\arctan\left(\sqrt{\frac{k^-_{22}}{k^-_{21}}}x_2\right)-2\sqrt{\frac{k^+_{21}}{k^+_{22}}}\arctan\left(\sqrt{\frac{k^+_{22}}{k^+_{21}}}x_2\right)
	+C,
	\end{aligned}
	\end{equation*}
	where $C$ is a constant which can be chosen so that the minimum of $U$ is zero. By Theorem \ref{result2}(d), any critical point $c = (c_1,c_2)\in\mathbb{R}^2_{>0}$ of the global potential $U$ must satisfy $\nabla U(c)=0$, namely
	\begin{equation*}
	c_1 = \frac{k^+_{11}}{k^-_{11}},\;\;\; k^-_{11}k^-_{22}c_2^3-k^+_{11}k^+_{22}c_2^2+k^-_{11}k^-_{21}c_2-k^+_{11}k^+_{21}=0.
	\end{equation*}
	Note that $c_2$ satisfies a cubic equation, which is capable of having three distinct positive real roots. In this case, the global potential $U$ has two local minimum points $c_A$ and $c_C$ and one saddle point $c_B$, as illustrated in Fig. \ref{bimodality}. It is easy to see that $c_A$ and $c_C$ are stable equilibrium points of the deterministic model \eqref{deterministic} and $c_B$ is an unstable equilibrium point. Therefore, by applying Theorem \ref{result1}, we have constructed a high-dimensional chemical reaction network that both satisfies stochastic detailed balance and displays multistability, i.e. multiple attractors for the deterministic model.
	\begin{figure}
		\centering\includegraphics[width=150mm]{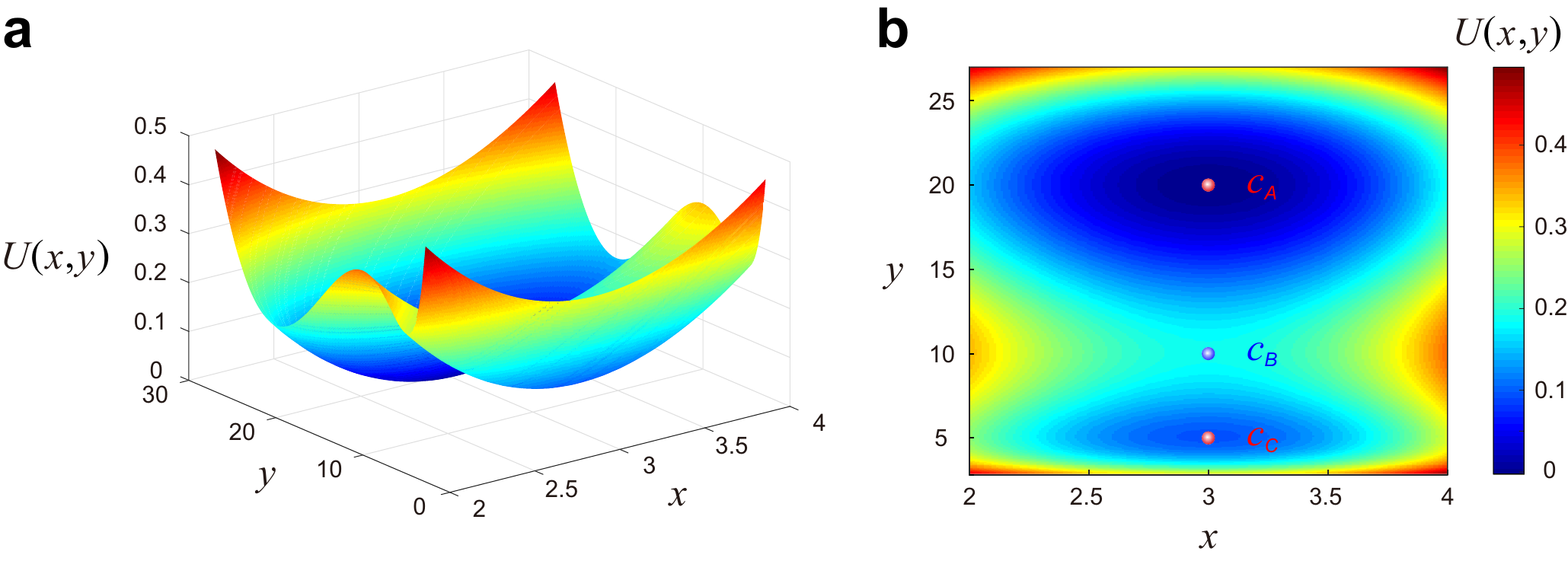}
		\caption{Potential function $U$ of the reaction network given in Example \ref{bistable}. (a) Three-dimensional plot of the potential function $U$. (b) Heat plot of the potential function $U$. The rate constants of the reaction network are chosen as $k^+_{11}=3, k_{11}^-=1, k^+_{12}= 3, k^-_{12}= 1, k^+_{21}=1000/3, k^-_{21}=350, k^+_{22}=35/3, k^-_{22}=1$. The local minimum points of $U$ are given by $c_A=(3,20)$ and $c_C=(3,5)$ and the saddle point of $U$ is given by $c_B=(3,10)$.}\label{bimodality}
	\end{figure}
	
	When $V$ is large, a stochastic reaction network can transition between multiple attractors with low probability events. In analogy to the classic Freidlin-Wentzell theory \cite{freidlin1998random}, if $x_0$ is in the basin of attraction of $c_A$, then for any $\delta>0$, it follows from \cite[Theorem 1.15]{agazzi2018large} that the transition time between attractors has the following asymptotic behavior:
	\begin{equation*}
	\lim_{V\rightarrow\infty}\frac{1}{V}\log\mathbb{E}_{x_0^V}\sigma(B(c_C,\delta)) = W(c_A,c_B),
	\end{equation*}
	where $\sigma(B(c_C,\delta))$ denotes the hitting time of the ball centered at $c_C$ with radius $\delta$. By Theorem \ref{result2}(e), for any $y\in\mathbb{R}^2_{>0}$ staying in the basin of attraction of $c_A$, we have
	\begin{equation*}
	W(c_A,y) = U(y)-U(c_A).
	\end{equation*}
	Taking $y\rightarrow c_B$ in the above equation and applying the continuity of the quasi-potential finally yield
	\begin{equation*}
	\lim_{V\rightarrow\infty}\frac{1}{V}\log\mathbb{E}_{x_0^V}\sigma(B(c_C,\delta)) = U(c_B)-U(c_A).
	\end{equation*}
	Note that the right-hand side of this equation is independent of $c_C$. This is because the trajectory of the stochastic model will be attracted by $c_C$ with very fast speed once it has escaped from the basin of attraction of $c_A$. Therefore, the hitting time of $B(c_C,\delta)$ is mainly determined by the exit time from the basin of attraction of $c_A$.
\end{example}

\begin{remark}
	The reaction networks in Examples \ref{schlogl} and \ref{bistable} are called multistable systems since they are capable of producing multiple positive equilibrium points of the deterministic model. So far, many results have been obtained to identify whether a deterministic reaction network admits multiple equilibrium points \cite{joshi2013atoms, conradi2017identifying, conradi2019multistationarity}. Here we mainly focus on the stochastic model and our results can be applied to investigate the stochastic transitions between multiple attractors.
\end{remark}

\section{Proof of Theorem \ref{result1}}\label{result1proof}
\begin{proof}[Proof of Theorem \ref{result1}]
	By the definition of the transition rates of the stochastic model, it is easy to see that for any $1\leq p\leq r$, $x^V\in E_V$, and $x^V\rightarrow x\in\mathbb{R}^d_{>0}$, we have
	\begin{equation*}
	\begin{split}
	\lim_{V\rightarrow\infty}\frac{q^V_{x^V,x^V+\frac{\omega_p}{V}}}{V}
	&= \lim_{V\rightarrow\infty}\sum_{l=1}^{r_p}k^+_{pl}
	\prod_{j=1}^d\frac{Vx^V_j}{V}\frac{Vx^V_j-1}{V}\dots\frac{Vx^V_j-\nu^j_{pl}+1}{V}\\
	&= \sum_{l=1}^{r_p}k^+_{pl}\prod_{j=1}^dx_j^{\nu^j_{pl}} = f_p^+(x),\\
	\lim_{V\rightarrow\infty}\frac{q^V_{x^V+\frac{\omega_p}{V},x^V}}{V}
	&= \lim_{V\rightarrow\infty}\sum_{l=1}^{r_p}k^-_{pl}
	\prod_{j=1}^d\frac{Vx^V_j+\omega^j_p}{V}\frac{Vx^V_j+\omega^j_p-1}{V}\dots\frac{Vx^V_j+\omega^j_p-\nu'^j_{pl}+1}{V}\\
	&= \sum_{l=1}^{r_p}k^-_{pl}\prod_{j=1}^dx_j^{\nu'^j_{pl}} = f_p^-(x).
	\end{split}
	\end{equation*}
	This clearly shows that
	\begin{equation}\label{eq80}
	\lim_{V\rightarrow \infty}\frac{q^V_{x^V,x^V+\frac{\omega_p}{V}}}{q^V_{x^V+\frac{\omega_p}{V},x^V}}
	= \frac{f_p^+(x)}{f_p^-(x)}.
	\end{equation}
	To prove that the system satisfies local detailed balance, we first prove that zero-order local detailed balance is satisfied. For any sufficiently large $V$ and any integers $\xi_1,\dots,\xi_r$ satisfying $\sum_{p=1}^r \xi_p\omega_p = 0$, we construct the following cycle in $E_V$:
	\begin{equation*}
	\begin{split}
	&\;x^V \rightarrow x^V+\frac{\mathrm{sgn}(\xi_1)\omega_1}{V} \rightarrow\cdots\rightarrow
	x^V+\frac{\xi_1\omega_1}{V} \\
	\rightarrow&\; x^V+\frac{\xi_1\omega_1+\mathrm{sgn}(\xi_2)\omega_2}{V} \rightarrow \cdots \rightarrow
	x^V+\frac{\xi_1\omega_1+\xi_2\omega_2}{V} \rightarrow \cdots \\
	\rightarrow&\; x^V+\frac{\xi_1\omega_1+\xi_2\omega_2+\cdots+\mathrm{sgn}(\xi_r)\omega_r}{V} \rightarrow \cdots \rightarrow x^V+\frac{\xi_1\omega_1+\xi_2\omega_2+\cdots+\xi_r\omega_r}{V} = x^V,
	\end{split}
	\end{equation*}
	where $\mathrm{sgn}(x)$ is the sign function which takes the value of $1$ if $x> 0$, takes the value of $0$ if $x=0$, and takes the value of $-1$ if $x<0$. Applying the Kolmogorov cycle condition to this cycle yields
	\begin{equation*}
	\begin{split}
	&\prod_{l=0}^{|\xi_1|-1}
	\frac{q^V_{x^V+\frac{\mathrm{sgn}(\xi_1)l\omega_1}{V},x^V+\frac{\mathrm{sgn}(\xi_1)(l+1)\omega_1}{V}}}
	{q^V_{x^V+\frac{\mathrm{sgn}(\xi_1)(l+1)\omega_1}{V},x^V+\frac{\mathrm{sgn}(\xi_1)l\omega_1}{V}}}\cdots\\
	&\prod_{l=0}^{|\xi_r|-1}
	\frac{q^V_{x^V+\frac{\xi_1\omega_1+\xi_2\omega_2+\cdots+\mathrm{sgn}(\xi_r)l\omega_r}{V},
			x^V+\frac{\xi_1\omega_1+\xi_2\omega_2+\cdots+\mathrm{sgn}(\xi_r)(l+1)\omega_r}{V}}}
	{q^V_{x^V+\frac{\xi_1\omega_1+\xi_2\omega_2+\cdots+\mathrm{sgn}(\xi_r)(l+1)\omega_r}{V},
			x^V+\frac{\xi_1\omega_1+\xi_2\omega_2+\cdots+\mathrm{sgn}(\xi_r)l\omega_r}{V}}}
	= 1.
	\end{split}
	\end{equation*}
	Taking the limit of $V\rightarrow\infty$ in this equation and applying \eqref{eq80} yield
	\begin{equation*}
	\left(\frac{f_1^+(x)}{f_1^-(x)}\right)^{\xi_1}\cdots\left(\frac{f_r^+(x)}{f_r^-(x)}\right)^{\xi_r} = 1.
	\end{equation*}
	Taking logarithms on both sides yields
	\begin{equation*}
	\sum_{p=1}^r\xi_p\log\frac{f_p^+(x)}{f_p^-(x)} = 0,
	\end{equation*}
	which shows that the system satisfies zero-order local detailed balance.
	
	We next prove that first-order local detailed balance is satisfied. To this end, for any sufficiently large $V$ and any $1\leq p,q\leq r$ with $p\neq q$, we consider the following parallelogram cycle in $E_V$:
	\begin{equation*}
	x^V \rightarrow x^V+\frac{\omega_p}{V} \rightarrow x^V+\frac{\omega_p}{V}+\frac{\omega_{q}}{V} \rightarrow x^V+\frac{\omega_{q}}{V} \rightarrow x^V.
	\end{equation*}
	Applying the Kolmogorov cycle condition to this cycle yields
	\begin{equation*}
	\frac{q^V_{x^V,x^V+\frac{\omega_p}{V}}
		q^V_{x^V+\frac{\omega_p}{V},x^V+\frac{\omega_p}{V}+\frac{\omega_{q}}{V}}
		q^V_{x^V+\frac{\omega_p}{V}+\frac{\omega_{q}}{V},x^V+\frac{\omega_{q}}{V}}
		q^V_{x^V+\frac{\omega_{q}}{V},x^V}}
	{q^V_{x^V+\frac{\omega_p}{V},x^V}
		q^V_{x^V+\frac{\omega_p}{V}+\frac{\omega_{q}}{V},x^V+\frac{\omega_p}{V}}
		q^V_{x^V+\frac{\omega_{q}}{V},x^V+\frac{\omega_p}{V}+\frac{\omega_{q}}{V}}
		q^V_{x^V,x^V+\frac{\omega_{q}}{V}}}
	= 1.
	\end{equation*}
	Taking logarithms on both sides of this equation yields
	\begin{equation*}
	\begin{aligned}
	\textrm{I}_V &:=
	\log\frac{\sum_{l=1}^{r_p}\frac{k^+_{pl}}{V^{|\nu_{pl}|}}\frac{(Vx^V+\omega_{q})!}{(Vx^V+\omega_{q}-\nu_{pl})!}}
	{\sum_{l=1}^{r_p}\frac{k^-_{pl}}{V^{|\nu'_{pl}|}}\frac{(Vx^V+\omega_{q}+\omega_p)!}{(Vx^V+\omega_{q}+\omega_p-\nu'_{pl})!}}
	-\log\frac{\sum_{l=1}^{r_p}\frac{k^+_{pl}}{V^{| \nu_{pl}|}}\frac{(Vx^V)!}{(Vx^V-\nu_{pl})!}}
	{\sum_{l=1}^{r_p}\frac{k^-_{pl}}{V^{|\nu'_{pl}|}}\frac{(Vx^V+\omega_p)!}{(Vx^V+\omega_p-\nu'_{pl})!}}\\
	&=\log\frac{\sum_{l=1}^{r_{q}}\frac{k^+_{ql}}{V^{|\nu_{ql}|}}\frac{(Vx^V+\omega_p)!}{(Vx^V+\omega_p-\nu_{ql})!}}
	{\sum_{l=1}^{r_{q}}\frac{k^-_{ql}}{V^{|\nu'_{ql}|}}\frac{(Vx^V+\omega_p+\omega_{q})!}{(Vx^V+\omega_p+\omega_{q}-\nu'_{ql})!}}
	-\log\frac{\sum_{l=1}^{r_{q}}\frac{k^+_{ql}}{V^{|\nu_{ql}|}}\frac{(Vx^V)!}{(Vx^V-\nu_{ql})!}}
	{\sum_{l=1}^{r_{q}}\frac{k^-_{ql}}{V^{|\nu'_{ql}|}}\frac{(Vx^V+\omega_{q})!}{(Vx^V+\omega_{q}-\nu'_{ql})!}}
	:= \textrm{II}_V.
	\end{aligned}
	\end{equation*}
	By the mean value theorem, it is not hard to prove that
	\begin{gather*}
	\lim_{V\rightarrow \infty}\frac{\textrm{I}_V}{\frac{1}{V}}
	= \omega_{q}\cdot\nabla\log\frac{f^+_p(x)}{f^-_p(x)},\\
	\lim_{V\rightarrow \infty}\frac{\textrm{II}_V}{\frac{1}{V}}
	= \omega_p\cdot\nabla\log\frac{f^+_{q}(x)}{f^-_{q}(x)}.
	\end{gather*}
	Combining the above two equations yields
	\begin{equation*}
	\omega_{q}\cdot\nabla\log\frac{f^+_p(x)}{f^-_p(x)}
	= \omega_p\cdot\nabla\log\frac{f^+_{q}(x)}{f^-_{q}(x)},
	\end{equation*}
	which shows that first-order local detailed balance is also satisfied.
\end{proof}

\section{Proof of Theorem \ref{result4}}\label{result4proof}
To prove that a reaction network satisfies stochastic detailed balance, it suffices to prove that for any $V>0$ and any cycle $\eta_1\rightarrow\eta_2\rightarrow\cdots\rightarrow \eta_L\rightarrow\eta_1$ in $E_V$, the Kolmogorov cycle condition
\begin{equation*}
q^V_{\eta_1,\eta_2}q^V_{\eta_2,\eta_3}\cdots q^V_{\eta_L,\eta_1}
= q^V_{\eta_2,\eta_1}q^V_{\eta_3,\eta_2}\cdots q^V_{\eta_1,\eta_L}
\end{equation*}
is satisfied. To this end, we need the following lemma.

\begin{lemma}\label{tool2}
	Under the conditions in Theorem \ref{result4}, for any $x\in E_V$ and any integers $\xi_1,\cdots,\xi_r$, whenever $q^V_{x,x+\frac{\omega_p}{V}}>0$ and $q^V_{\tilde{x},\tilde{x}+\frac{\omega_p}{V}}>0$ for some $1\leq p\leq r$, we have
	\begin{equation}\label{step2}
	\frac{q^V_{x,x+\frac{\omega_p}{V}}}{q^V_{x+\frac{\omega_p}{V},x}}\cdot\frac{(Vx+\omega_p)!}{(Vx)!}
	= \frac{q^V_{\tilde{x},\tilde{x}+\frac{\omega_p}{V}}}
	{q^V_{\tilde{x}+\frac{\omega_p}{V},\tilde{x}}}\cdot\frac{(V\tilde{x}+\omega_p)!}{(V\tilde{x})!},
	\end{equation}
	where
	\begin{equation*}
	\tilde{x} = x+\sum_{q\neq p}\xi_q\frac{\omega_{q}}{V}.
	\end{equation*}
\end{lemma}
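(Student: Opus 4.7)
The plan is to first simplify the left-hand side of \eqref{step2} into a form that exhibits its dependence on $x$ explicitly. Using the identity $\omega_p=\nu'_{pl}-\nu_{pl}$, so that $Vx+\omega_p-\nu'_{pl}=Vx-\nu_{pl}$, the quantity $G(x):=\frac{q^V_{x,x+\omega_p/V}}{q^V_{x+\omega_p/V,x}}\cdot\frac{(Vx+\omega_p)!}{(Vx)!}$ reduces, after cancelling the common factor $(Vx+\omega_p)!$, to
\begin{equation*}
G(x) \;=\; \frac{\displaystyle\sum_{l=1}^{r_p} c^+_{pl}\,/\,(Vx-\nu_{pl})!}{\displaystyle\sum_{l=1}^{r_p} c^-_{pl}\,/\,(Vx-\nu_{pl})!},
\end{equation*}
where $c^+_{pl}:=k^+_{pl}V^{1-|\nu_{pl}|}$ and $c^-_{pl}:=k^-_{pl}V^{1-|\nu'_{pl}|}$ are independent of $x$. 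The positivity hypotheses on $q^V_{x,x+\omega_p/V}$ and $q^V_{\tilde x,\tilde x+\omega_p/V}$ ensure the denominators appearing in \eqref{step2} are nonzero. Setting $\Delta:=\sum_{q\neq p}\xi_q\omega_q$, so that $V\tilde x=Vx+\Delta$, the lemma reduces to proving $G(x)=G(\tilde x)$.

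I would then handle the two alternatives of Theorem \ref{result4} separately. In Case (b), if $k^+_{pl}/k^-_{pl}$ equals a common constant $K_p$ for all $l$, then $c^+_{pl}/c^-_{pl}=K_pV^{|\omega_p|}$ is also independent of $l$, and $G(x)$ collapses to this constant; invariance under $x\mapsto\tilde x$ is then immediate. In Case (a), suppose $\omega_p$ satisfies the orthogonality condition. Set $J:=\{j:\omega_q^j\neq 0\text{ for some }q\neq p\}$. For every $j\notin J$, the coordinate $\Delta^j$ vanishes; for every $j\in J$, the orthogonality condition $(\nu^j_{pl_1}-\nu^j_{pl_2})\omega_q^j=0$ (applied with $q$ chosen so that $\omega_q^j\neq 0$) forces $\nu^j_{pl}$ to be independent of $l$. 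Hence, coordinate by coordinate, the ratio $(Vx+\Delta-\nu_{pl})!/(Vx-\nu_{pl})!$ reduces to a product over $j\in J$ that does not depend on $l$; call this common factor $C(x,\Delta)$. Pulling $1/C(x,\Delta)$ out of both numerator and denominator of $G(\tilde x)$ then yields $G(\tilde x)=G(x)$, as required.

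The main obstacle I anticipate is the structural observation in Case (a): one must recognize that the orthogonality condition is designed precisely so that the coordinates in which $\nu_{pl}$ varies with $l$ coincide with those in which $\Delta$ is forced to vanish. Once this is noted, the rest of the argument is a routine factorization. The preliminary algebraic simplification leading to the compact form of $G(x)$ above requires only the identity $Vx+\omega_p-\nu'_{pl}=Vx-\nu_{pl}$, making it essentially bookkeeping, and Case (b) is a one-line computation.
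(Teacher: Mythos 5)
Your proposal is correct and takes essentially the same route as the paper's proof: cancel $(Vx+\omega_p)!$ using $\omega_p-\nu'_{pl}=-\nu_{pl}$ to reach the compact ratio $G(x)$, dispose of case (b) because $c^+_{pl}/c^-_{pl}$ is an $l$-independent constant, and in case (a) use the orthogonality condition to split coordinates so that an $l$-independent factor pulls out of numerator and denominator. The one point you leave implicit — which the paper makes explicit by proving $\mathcal{R}^+_p(x)=\mathcal{R}^+_p(\tilde{x})$ — is that the two positivity hypotheses guarantee the same indices $l$ contribute nonzero terms at $x$ and at $\tilde{x}$ (so the common-factor cancellation is valid even when some reactions cannot fire); this is immediate from your own coordinate split, since the constraints on the coordinates $j\in J$ are $l$-independent and the remaining coordinates of $x$ and $\tilde{x}$ coincide.
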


\begin{proof}
	We first discuss the relationship between the reactions that can occur at $x$ and the reactions that can occur at $\tilde{x}$. For each $1\leq p\leq r$, let
	\begin{equation*}
	\mathcal{R}^+_p(x)
	= \{R^+_{pl}\in \mathcal{R}^+_p:Vx_j\geq \nu^j_{pl}\;\text{for all}\;1\leq j\leq d\}
	\end{equation*}
	be the family of reactions that belong to $\mathcal{R}^+_p$ and can occur at $x$. We claim that if $\omega_p$ satisfies the orthogonality condition, $\mathcal{R}^+_p(x)\neq\varnothing$, and $\mathcal{R}^+_p(\tilde{x})\neq\varnothing$, then $\mathcal{R}^+_p(x)=\mathcal{R}^+_p(\tilde{x})$.
	
	To prove this, set
	\begin{equation*}
	J_p=\{1\leq j\leq d:\nu^j_{pl_1} = \nu^j_{pl_2}\;\text{for all}\;1\leq l_1,l_2\leq r_p\}.
	\end{equation*}
	Since $\mathcal{R}^+_p(x)\neq\varnothing$, there exists $1\leq l_1\leq r_p$ such that $Vx_j\geq\nu^j_{pl_1}$ for all $1\leq j\leq d$. It then follows from the definition of $J_p$ that $Vx_j\geq\nu^j_{pl}$ for all $j\in J_p$ and $1\leq l\leq r_p$. Similarly, since $\mathcal{R}^+_p(\tilde{x})\neq\varnothing$, we conclude that $V\tilde{x}_j\geq \nu^j_{pl}$ for all $j\in J_p$ and $1\leq l\leq r_p$. On the other hand, if $\omega_p$ satisfies the orthogonality condition, then $\omega^j_{q} = 0$ for all $j\notin J_p$ and $q\neq p$. This shows that
	\begin{equation*}
	\tilde{x}_j = x_j+\sum_{q\neq p}\xi_q\frac{\omega^j_{q}}{V} = x_j
	\end{equation*}
	for all $j\notin J_p$. Therefore, for all $1\leq j\leq d$ and $1\leq l\leq r_p$, we have proved that $Vx_j\geq \nu^j_{pl}$ holds if and only if $V\tilde{x}_j\geq \nu^j_{pl}$ holds. This clearly shows that $\mathcal{R}^+_p(x) = \mathcal{R}^+_p(\tilde{x})$.
	
	We are now in a position to prove \eqref{step2}. For each $1\leq p\leq r$, note that
	\begin{equation*}
	\begin{aligned}
	\frac{q^V_{x,x+\frac{\omega_p}{V}}}{q^V_{x+\frac{\omega_p}{V},x}}\cdot\frac{(Vx+\omega_p)!}{(Vx)!}
	&=\frac{\sum_{l:R^+_{pl}\in\mathcal{R}^+_p(x)}\frac{k^+_{pl}}{V^{|\nu_{pl}|-1}}\frac{(Vx)!}{(Vx-\nu_{pl})!}}
	{\sum_{l:R^+_{pl}\in\mathcal{R}^+_p(x)}\frac{k^-_{pl}}{V^{|\nu'_{pl}|-1}}\frac{(Vx+\omega_p)!}{(Vx+\omega_p-\nu'_{pl})!}}
	\frac{(Vx+\omega_p)!}{(Vx)!}\\
	&=\frac{\sum_{l:R^+_{pl}\in\mathcal{R}^+_p(x)}\frac{k^+_{pl}}{V^{|\nu_{pl}|-1}}\frac{1}{(Vx-\nu_{pl})!}}
	{\sum_{l:R^+_{pl}\in\mathcal{R}^+_p(x)}\frac{k^-_{pl}}{V^{|\nu'_{pl}|-1}}\frac{1}{(Vx-\nu_{pl})!}},
	\end{aligned}
	\end{equation*}
	where we have used the fact that $\omega_p-\nu'_{pl}=-\nu_{pl}$ for any $1\leq p\leq r$ and $1\leq l\leq r_p$. Since $q^{V}_{x,x+\frac{\omega_p}{V}}>0$ and $q^V_{\tilde{x},\tilde{x}+\frac{\omega_p}{V}}>0$, we have $\mathcal{R}^+_p(x)\neq\varnothing$ and $\mathcal{R}^+_p(\tilde{x})\neq\varnothing$. This shows that $\mathcal{R}^+_p(x) = \mathcal{R}^+_p(\tilde{x})$. Similarly, we have
	\begin{equation*}
	\begin{aligned}
	\frac{q^V_{\tilde{x},\tilde{x}+\frac{\omega_p}{V}}}{q^V_{\tilde{x}+\frac{\omega_p}{V},\tilde{x}}}\cdot\frac{(V\tilde{x}+\omega_p)!}{(V\tilde{x})!}
	&=\frac{\sum_{l:R^+_{pl}\in\mathcal{R}^+_p(\tilde{x})}\frac{k^+_{pl}}{V^{|\nu_{pl}|-1}}\frac{(V\tilde{x})!}{(V\tilde{x}-\nu_{pl})!}}
	{\sum_{l:R^+_{pl}\in\mathcal{R}^+_p(\tilde{x})}\frac{k^-_{pl}}{V^{|\nu'_{pl}|-1}}\frac{(V\tilde{x}+\omega_p)!}{(V\tilde{x}+\omega_p-\nu'_{pl})!}}
	\frac{(V\tilde{x}+\omega_p)!}{(V\tilde{x})!}\\
	&=\frac{\sum_{l:R^+_{pl}\in\mathcal{R}^+_p(x)}\frac{k^+_{pl}}{V^{|\nu_{pl}|-1}}\frac{1}{(V\tilde{x}-\nu_{pl})!}}
	{\sum_{l:R^+_{pl}\in\mathcal{R}^+_p(x)}\frac{k^-_{pl}}{V^{|\nu'_{pl}|-1}}\frac{1}{(V\tilde{x}-\nu_{pl})!}}.
	\end{aligned}
	\end{equation*}
	To prove \eqref{step2}, we only need to prove
	\begin{equation*}
	\frac{\sum_{l:R^+_{pl}\in\mathcal{R}^+_p(x)}\frac{k^+_{pl}}{V^{|\nu_{pl}|-1}}\frac{1}{(Vx-\nu_{pl})!}}
	{\sum_{l:R^+_{pl}\in\mathcal{R}^+_p(x)}\frac{k^-_{pl}}{V^{|\nu'_{pl}|-1}}\frac{1}{(Vx-\nu_{pl})!}}
	=\frac{\sum_{l:R^+_{pl}\in\mathcal{R}^+_p(x)}\frac{k^+_{pl}}{V^{|\nu_{pl}|-1}}\frac{1}{(V\tilde{x}-\nu_{pl})!}}
	{\sum_{l:R^+_{pl}\in\mathcal{R}^+_p(x)}\frac{k^-_{pl}}{V^{|\nu'_{pl}|-1}}\frac{1}{(V\tilde{x}-\nu_{pl})!}}.
	\end{equation*}
	Since  $\nu'_{pl}=\nu_{pl}+\omega_p$ for any $1\leq p\leq r$ and $1\leq l \leq r_p$, we only need to prove
	
	\begin{equation}\label{simplified}
	\frac{\sum_{l:R^+_{pl}\in\mathcal{R}^+_p(x)}\frac{k^+_{pl}}{V^{|\nu_{pl}|}}\frac{1}{(Vx-\nu_{pl})!}}
	{\sum_{l:R^+_{pl}\in\mathcal{R}^+_p(x)}\frac{k^-_{pl}}{V^{|\nu_{pl}|}}\frac{1}{(Vx-\nu_{pl})!}}
	=\frac{\sum_{l:R^+_{pl}\in\mathcal{R}^+_p(x)}\frac{k^+_{pl}}{V^{|\nu_{pl}|}}\frac{1}{(V\tilde{x}-\nu_{pl})!}}
	{\sum_{l:R^+_{pl}\in\mathcal{R}^+_p(x)}\frac{k^-_{pl}}{V^{|\nu_{pl}|}}\frac{1}{(V\tilde{x}-\nu_{pl})!}}.
	\end{equation}
	If $\omega_p$ does not satisfy the orthogonality condition, then \eqref{joshib} must hold. In this case, it is easy to see that
	\begin{equation*}
	\frac{\sum_{l:R^+_{pl}\in\mathcal{R}^+_p(x)}\frac{k^+_{pl}}{V^{|\nu_{pl}|}}\frac{1}{(Vx-\nu_{pl})!}}
	{\sum_{l:R^+_{pl}\in\mathcal{R}^+_p(x)}\frac{k^-_{pl}}{V^{|\nu_{pl}|}}\frac{1}{(Vx-\nu_{pl})!}}
	=\frac{k^+_{p1}}{k^-_{p1}} =\frac{\sum_{l:R^+_{pl}\in\mathcal{R}^+_p(x)}\frac{k^+_{pl}}{V^{|\nu_{pl}|}}\frac{1}{(V\tilde{x}-\nu_{pl})!}}
	{\sum_{l:R^+_{pl}\in\mathcal{R}^+_p(x)}\frac{k^-_{pl}}{V^{|\nu_{pl}|}}\frac{1}{(V\tilde{x}-\nu_{pl})!}}.
	\end{equation*}
	If $\omega_p$ satisfies the orthogonality condition, then we have
	\begin{equation}\label{eq3}
	\begin{aligned}
	&\;\frac{\sum_{l:R^+_{pl}\in\mathcal{R}^+_p(x)}\frac{k^+_{pl}}{V^{|\nu_{pl}|}}\frac{1}{(V\tilde{x}-\nu_{pl})!}}
	{\sum_{l:R^+_{pl}\in\mathcal{R}^+_p(x)}\frac{k^-_{pl}}{V^{|\nu_{pl}|}}\frac{1}{(V\tilde{x}-\nu_{pl})!}}\\ =&\;\frac{\sum_{l:R^+_{pl}\in\mathcal{R}^+_p(x)}\frac{k^+_{pl}}{V^{|\nu_{pl}|}}\frac{1}{\prod_{j\in J_p}(V\tilde{x}_j-\nu^j_{pl})!\prod_{j\notin J_p}(V\tilde{x}_j-\nu^j_{pl})!}}{\sum_{l:R^+_{pl}\in\mathcal{R}^+_p(x)}\frac{k^-_{pl}}{V^{|\nu_{pl}|}}\frac{1}{\prod_{j\in J_p}(V\tilde{x}_j-\nu^j_{pl})!\prod_{j\notin J_p}(V\tilde{x}_j-\nu^j_{pl})!}}\\
	=&\;\frac{\sum_{l:R^+_{pl}\in\mathcal{R}^+_p(x)}\frac{k^+_{pl}}{V^{|\nu_{pl}|}}\frac{1}{\prod_{j\notin J_p}(Vx_j-\nu^j_{pl})!}}{\sum_{l:R^+_{pl}\in\mathcal{R}^+_p(x)}\frac{k^-_{pl}}{V^{|\nu_{pl}|}}\frac{1}{\prod_{j\notin J_p}(Vx_j-\nu^j_{pl})!}
	}=\frac{\sum_{l:R^+_{pl}\in\mathcal{R}^+_p(x)}\frac{k^+_{pl}}{V^{|\nu_{pl}|}}\frac{1}{\prod_{j=1}^d(Vx_j-\nu^j_{pl})!}}{\sum_{l:R^+_{pl}\in\mathcal{R}^+_p(x)}\frac{k^-_{pl}}{V^{|\nu_{pl}|}}\frac{1}{\prod_{j=1}^d (Vx_j-\nu^j_{pl})!}}\\
	=&\;\frac{\sum_{l:R^+_{pl}\in\mathcal{R}^+_p(x)}\frac{k^+_{pl}}{V^{|\nu_{pl}|}}\frac{1}{(Vx-\nu_{pl})!}}{\sum_{l:R^+_{pl}\in\mathcal{R}^+_p(x)}\frac{k^-_{pl}}{V^{|\nu_{pl}|}}\frac{1}{(Vx-\nu_{pl})!}},
	\end{aligned}
	\end{equation}
	where the second and third equalities in \eqref{eq3} follow from the fact that $\nu^j_{pl_1}=\nu^j_{pl_2}$ for any $1\leq l_1,l_2\leq r_p$ and $j\in J_p$ and the fact that $\tilde{x}_j=x_j$ for any $j\notin J_p$. Therefore, we have proved \eqref{simplified}. This completes the proof.
\end{proof}

We are now in a position to prove Theorem \ref{result4}.

\begin{proof}[Proof of Theorem \ref{result4}]
	Let $\eta_1\rightarrow\eta_2\rightarrow\cdots\rightarrow\eta_L\rightarrow\eta_1$ be an arbitrary cycle in $E_V$. We first prove that there is a one-to-one correspondence between the transitions in the cycle. Since $\omega_1,\cdots,\omega_r$ are linearly independent, for each $1\leq p \leq r$, the number of transitions resulting from the reaction vector $\omega_p$ in the cycle must be equal to the number of transitions resulting from the reaction vector $-\omega_p$. Otherwise, the reaction vector $\omega_p$ can be linearly expressed by other reaction vectors in $V(\mathcal{R})$. This contradicts the fact that the elements in $V(\mathcal{R})$ are linearly independent.
	
	We then pair the transitions resulting from the reaction vector $\omega_p$ with the transitions resulting from the reaction vector $-\omega_p$ in the following manner. First, we project the cycle onto the one-dimensional line spanned by $\omega_p$, as illustrated in Fig. \ref{decomposition}. Note that the projection mentioned here means oblique projection rather than orthogonal projection (suppose that $\omega_1,\cdots,\omega_r$ are linearly independent vectors; if a vector $\omega$ can be linearly expressed by $\omega_1,\cdots,\omega_r$ as $\omega = a_1\omega_1+\cdots+a_r\omega_r$, then the (oblique) projection of $\omega$ onto the direction of $\omega_p$ is simply defined as $a_p\omega_p$). The image of the projection onto the one-dimensional line is still a cycle and only the transitions resulting from $\pm\omega_p$ exist in the projected cycle. Second, we decompose the transitions in the projected cycle into multiple floors and we pair each transition with one of its reversed transitions located on the same floor, as depicted in Fig. \ref{decomposition}. This is always possible since the number of transitions resulting from $\omega_p$ on each floor must be equal to that resulting from $-\omega_p$. Based on this method, each transition
	$$
	\eta_i\rightarrow\eta_{i+1}=\eta_i\pm\frac{\omega_p}{V}
	$$
	can be paired with a transition
	$$
	\eta_i+\sum_{q\neq p}\xi_q\frac{\omega_q}{V}\pm\frac{\omega_p}{V}
	= \eta_j\rightarrow\eta_{j+1}
	= \eta_i+\sum_{q\neq p}\xi_q\frac{\omega_q}{V}
	$$
	for some integers $\xi_1,\cdots,\xi_r$. Obviously, this method establishes a one-to-one correspondence between the transitions in the cycle. Applying Lemma \ref{tool2} to the paired transitions yields
	\begin{equation*}
	\frac{q^V_{\eta_{i},\eta_{i+1}}}{q^V_{\eta_{i+1},\eta_{i}}}\frac{(V\eta_{i+1})!}{(V\eta_{i})!}
	=\frac{q^V_{\eta_{j+1},\eta_{j}}}{q^V_{\eta_{j},\eta_{j+1}}}\frac{(V\eta_{j})!}{(V\eta_{j+1})!}.
	\end{equation*}
	This shows that
	\begin{equation*}
	\frac{q^V_{\eta_{i},\eta_{i+1}}q^V_{\eta_{j},\eta_{j+1}}}{q^V_{\eta_{i+1},\eta_{i}}q^V_{\eta_{j+1},\eta_{j}}}
	=\frac{(V\eta_{i})!(V\eta_{j})!}{(V\eta_{i+1})!(V\eta_{j+1})!}.
	\end{equation*}
	Due to the one-to-one correspondence between the transitions in the cycle, we finally obtain
	\begin{equation*}
	\frac{q^V_{\eta_1,\eta_2}q^V_{\eta_2,\eta_3}\cdots q^V_{\eta_L,\eta_1}}
	{q^V_{\eta_2,\eta_1}q^V_{\eta_3,\eta_2}\cdots q^V_{\eta_1,\eta_L}}
	= \frac{(V\eta_1)!(V\eta_2)!\cdots(V\eta_L)!}
	{(V\eta_2)!(V\eta_3)!\cdots(V\eta_1)!} = 1.
	\end{equation*}
	Thus we have proved that the Kolmogorov cycle condition is satisfied for each cycle, which implies that the system satisfies stochastic detailed balance.
	\begin{figure}[htb]
		\centering\includegraphics[width=140mm]{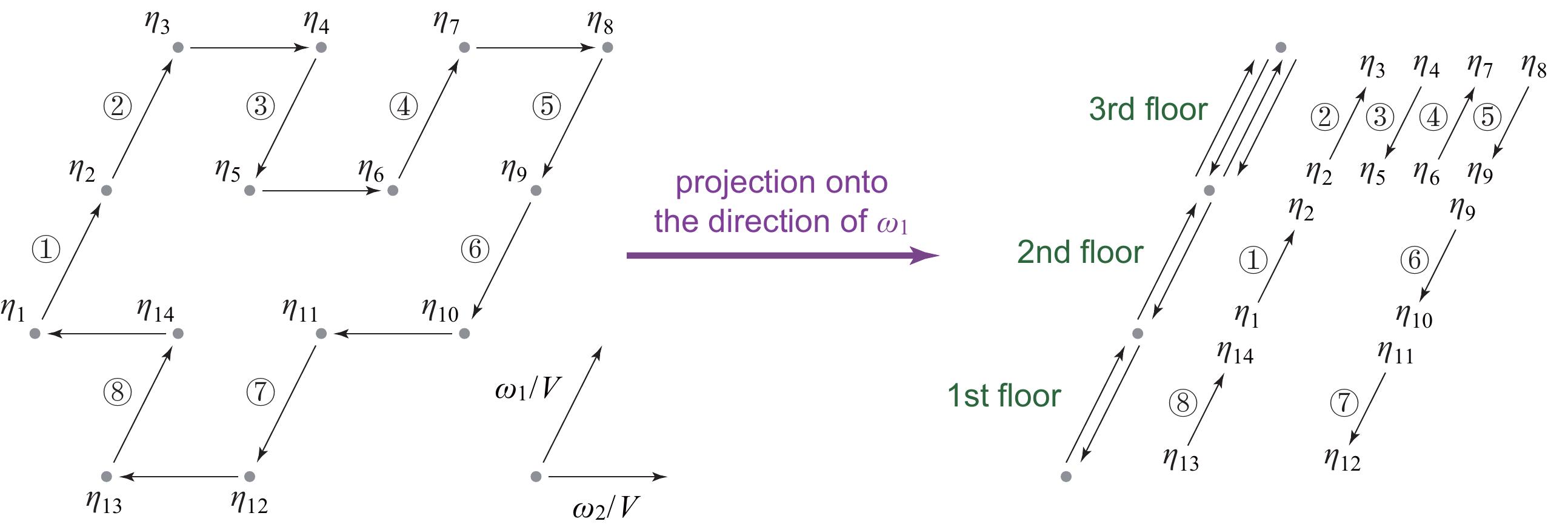}
		\caption{Pairing of the transitions in a cycle resulting from the reaction vector $\omega_1$ with the transitions resulting from the reaction vector $-\omega_1$. The cycle to the left can be projected onto the direction of $\omega_1$, forming the projected cycle (the projection mentioned here means oblique projection rather than orthogonal projection). The projected cycle can be decomposed into many floors. Each transition in the projected cycle can then be paired with one of its reversed transitions located on the same floor. This establishes a one-to-one correspondence between the transitions resulting from $\omega_1$ and the transitions resulting from $-\omega_1$. In this way, $\eta_1\rightarrow \eta_2$ is paired with $\eta_9\rightarrow \eta_{10}$, $\eta_2\rightarrow \eta_3$ is paired with $\eta_4\rightarrow \eta_5$, $\eta_6\rightarrow \eta_7$ is paired with $\eta_8\rightarrow \eta_9$, and $\eta_{11}\rightarrow \eta_{12}$ is paired with $\eta_{13}\rightarrow \eta_{14}$.}\label{decomposition}
	\end{figure}
\end{proof}

\section{Proof of Theorem \ref{result2}}\label{result2proof}
To prove Theorem \ref{result2}, we need some lemmas. The following lemma is exactly Theorem \ref{result2}(a).

\begin{lemma}\label{le10}
	Suppose that a reaction network satisfies zero-order local detailed balance. Then the vector field $F$ defined in \eqref{vectorfield} is independent of the choice of the basis of ${\rm span}(V(\mathcal{R}))$. Moreover, for any $1\leq p\leq r$, we have
	\begin{equation*}
	F(x)\cdot\omega_p =\log\frac{f^+_p(x)}{f^-_p(x)},\;\;\;x\in\mathbb{R}^d_{>0}.
	\end{equation*}
\end{lemma}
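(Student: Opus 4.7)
My plan is to establish the pointwise identity $F(x)\cdot\omega_p = \log(f^+_p(x)/f^-_p(x))$ first, and then deduce independence of basis as a short corollary.

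For the identity, I rely on a purely linear-algebraic observation: since $M$ has full column rank $m$, the matrix $(M^TM)^{-1}M^T$ is the Moore--Penrose left inverse of $M$, and in particular $(M^TM)^{-1}M^T M = I_m$. Given $\omega_p\in V(\mathcal{R})\subset\mathrm{span}(V(\mathcal{R}))$, the column vector $\omega_p^T$ lies in the column space of $M$, so there is a unique $a_p\in\Rnum^m$ with $\omega_p^T = Ma_p$. Consequently $(M^TM)^{-1}M^T\omega_p^T = a_p$, and therefore $F(x)\cdot\omega_p = \sum_{k=1}^m a_{pk}\log(f^+_{i_k}(x)/f^-_{i_k}(x))$. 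The lemma thus reduces to showing that this linear combination equals $\log(f^+_p(x)/f^-_p(x))$.

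The main technical obstacle --- and the only place zero-order local detailed balance actually enters --- is that the coefficients $a_{pk}$ need not be integers, while the hypothesis \eqref{continuouscyclecondition} is stated only for integer cycle coefficients. However, $M$ has integer entries and $\omega_p$ is an integer vector, so Cramer's rule forces each $a_{pk}$ to be rational. Multiplying by a common denominator $N\in\Znum_{>0}$ turns $N\omega_p - \sum_k (Na_{pk})\omega_{i_k} = 0$ into an honest integer linear relation in $V(\mathcal{R})$. Taking $\xi_p = N$ (combined with $-N$ if $p$ happens to coincide with some $i_{k_0}$), $\xi_{i_k} = -Na_{pk}$, and $\xi_q = 0$ for all other $q$, the condition $\sum_p\xi_p\omega_p = 0$ is satisfied, so applying \eqref{continuouscyclecondition} and dividing by $N$ gives exactly $\log(f^+_p(x)/f^-_p(x)) = \sum_{k=1}^m a_{pk}\log(f^+_{i_k}(x)/f^-_{i_k}(x))$, as desired.

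For basis independence, I exploit a structural fact visible directly from \eqref{vectorfield}: $F(x)$ is a linear combination of the rows of $M^T$, namely $\omega_{i_1},\dots,\omega_{i_m}$, hence $F(x)\in\mathrm{span}(V(\mathcal{R}))$. Any other basis produces a vector field $F'$ that also lies in $\mathrm{span}(V(\mathcal{R}))$ and, by the identity just proved, satisfies $F'(x)\cdot\omega_p = \log(f^+_p(x)/f^-_p(x)) = F(x)\cdot\omega_p$ for every $1\leq p\leq r$. Since $\{\omega_1,\dots,\omega_r\}$ spans $\mathrm{span}(V(\mathcal{R}))$, the difference $F(x)-F'(x)$ is orthogonal to all of $\mathrm{span}(V(\mathcal{R}))$ while simultaneously lying in it, so $F(x) = F'(x)$.
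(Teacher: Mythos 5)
Your proof is correct, and it rearranges the paper's argument in a way worth noting. The core mechanism is shared: both you and the paper expand vectors of $\mathrm{span}(V(\mathcal{R}))$ in the chosen basis, observe via Cramer's rule that the expansion coefficients are rational because the stoichiometric data are integers, and then feed the resulting linear relation into the zero-order condition \eqref{continuouscyclecondition}. You make explicit the step the paper leaves implicit, namely that the hypothesis is stated only for integer $\xi_1,\dots,\xi_r$ and must be upgraded to rational coefficients by clearing denominators; spelling this out (including the degenerate case $p=i_{k_0}$) is a genuine improvement in rigor. Where you diverge is in the order and in the treatment of basis independence: the paper proves independence first by a direct change-of-basis computation, writing $\tilde{M}=MA$ with $A$ invertible and rational, transporting the log-ratio vector through $A$, and checking that $(\tilde{M}^T\tilde{M})^{-1}\tilde{M}^T=A^{-1}(M^TM)^{-1}M^T$, and only afterwards derives the identity $F(x)\cdot\omega_p=\log\frac{f^+_p(x)}{f^-_p(x)}$. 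You instead prove the identity first for an arbitrary basis and then obtain independence as a uniqueness statement: $F(x)$ lies in $\mathrm{span}(V(\mathcal{R}))$ (being a combination of the rows of $M^T$) and is determined there by its inner products against the spanning set $\{\omega_1,\dots,\omega_r\}$, so any two candidate fields differ by a vector both contained in and orthogonal to $\mathrm{span}(V(\mathcal{R}))$, hence vanish. This characterization argument is slightly more conceptual and avoids the matrix bookkeeping of the change-of-basis computation, at the small cost of having to observe separately that $F(x)$ lies in the span; both routes are equally valid.
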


\begin{proof}[Proof of Lemma \ref{le10}]
	Let $\{\omega_{i_1},\dots,\omega_{i_m}\}$ and $\{\omega_{j_1},\dots,\omega_{j_m}\}$ be two arbitrary bases of ${\rm span}(V(\mathcal{R}))$ and let $M =(\omega^T_{i_1},\dots,\omega^T_{i_m})$ and $\tilde{M} = (\omega^T_{j_1},\dots,\omega^T_{j_m})$ be two matrices. Since the columns of $M$, as well as the columns of $\tilde{M}$, are linearly independent, there exists an invertible matrix  $A=(a_{pk})\in M_{m\times m}(\mathbb{\mathbb{R}})$ such that $\tilde{M}=MA$, where $M_{m\times m}(\mathbb{\mathbb{R}})$ denotes the set of $m\times m$ matrices whose components are all real numbers. Since $\mathrm{rank}(M) = m$, the matrix $M$ must have an invertible $m\times m$ submatrix. Since the entries of $M$ and $\tilde{M}$ are all rational numbers, using Cramer's rule of computing the inverse matrix, it is easy to see that the entries of $A$ are all rational numbers. Note that each column of $\tilde{M}$ can be linearly expressed by the columns of $M$ as
	\begin{equation*}
	\omega_{j_k}=\sum_{p=1}^m a_{pk}\omega_{i_p},\;\;\;1\leq k\leq m.
	\end{equation*}
	Since the system satisfies zero-order local detailed balance and since $a_{pk}\in\mathbb{Q}$ for all $1\leq p,k\leq m$, we obtain
	\begin{equation*}
	\log\frac{f^+_{j_k}(x)}{f^-_{j_k}(x)}=\sum_{p=1}^m a_{pk}\log \frac{f^+_{i_p}(x)}{f^-_{i_p}(x)},\;\;\;1\leq k\leq m.
	\end{equation*}
	Since $\mathrm{rank}(A) = \mathrm{rank}(A^TA)$ for an arbitrary real matrix $A$ \cite[Section 3.5]{zhang2011matrix}, it is easy to see that the matrices $M^TM$ and $\tilde{M}^T\tilde{M}$ are both invertible. Thus we obtain
	\begin{equation*}
	\begin{aligned}
	&\;\left(\log\frac{f^+_{j_1}(x)}{f^-_{j_1}(x)},\dots,\log\frac{f^+_{j_m}(x)}{f^-_{j_m}(x)}\right)
	(\tilde{M}^T\tilde{M})^{-1}\tilde{M}^T\\
	=&\;\left(\log\frac{f^+_{j_1}(x)}{f^-_{j_1}(x)},\dots,\log\frac{f^+_{j_m}(x)}{f^-_{j_m}(x)}\right)
	A^{-1}(M^TM)^{-1}M^T\\
	=&\;\left(\log\frac{f^+_{i_1}(x)}{f^-_{i_1}(x)},\dots,\log\frac{f^+_{i_m}(x)}{f^-_{i_m}(x)}\right) (M^TM)^{-1}M^T.
	\end{aligned}
	\end{equation*}
	This implies that the definition of the vector field $F$ is independent of the choice of the basis of ${\rm span}(V(\mathcal{R}))$.
	
	On the other hand, since $\{\omega_{i_1},\cdots,\omega_{i_m}\}$ is a basis of ${\rm span}(V(\mathcal{R}))$, for each $1\leq p\leq r$, the reaction vector $\omega_p$ can be linearly expressed by $\omega_{i_1},\cdots,\omega_{i_m}$ as
	\begin{equation*}
	\omega_p = a_1\omega_{i_1}+\cdots+a_m\omega_{i_m} = aM^T,
	\end{equation*}
	where $a = (a_1,\cdots,a_m)\in\mathbb{R}^m$. This shows that
	\begin{equation*}
	(\omega_{i_1}^T,\cdots,\omega_{i_m}^T)(M^TM)^{-1}M^T\omega_p^T = M(M^TM)^{-1}M^TMa^T = Ma^T = \omega_p^T.
	\end{equation*}
	Since the system satisfies zero-order local detailed balance and since the entries of $(M^TM)^{-1}M^T\omega_p^T$ are all rational numbers, we immediately obtain
	\begin{equation}
	\begin{split}
	F(x)\cdot\omega_p &= \left(\log\frac{f^+_{i_1}(x)}{f^-_{i_1}(x)},\cdots,
	\log\frac{f_{i_m}^+(x)}{f^-_{i_m}(x)}\right)(M^TM)^{-1}M^T\omega_p^T\\
	&= \log\frac{f^+_p(x)}{f^-_p(x)},\;\;\;x\in\mathbb{R}^d_{>0}.
	\end{split}
	\end{equation}
	This completes the proof.
\end{proof}

The following lemma is exactly Theorem \ref{result2}(b).

\begin{lemma}\label{le3}
	Suppose that a reaction network satisfies local detailed balance. Then the vector field $F$ has a potential function $U\in C^\infty(\mathbb{R}^d_{>0})$, namely
	\begin{equation*}
	F(x) = -\nabla U(x),\;\;\;x\in\mathbb{R}^d_{>0}\cap(x_0+\mathrm{span}(V(\mathcal{R}))).
	\end{equation*}
\end{lemma}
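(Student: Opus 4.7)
The plan is to parametrize the affine subspace $x_0 + \mathrm{span}(V(\mathcal{R}))$ by a basis of reaction vectors, recast the desired identity as the exactness of a smooth 1-form on the parameter space, and use first-order local detailed balance \eqref{cparallelogram} to verify the closed-form condition. Fix the basis $\{\omega_{i_1},\ldots,\omega_{i_m}\}$ of $\mathrm{span}(V(\mathcal{R}))$ from the statement and introduce the parametrization $s = (s_1,\ldots,s_m) \mapsto x(s) = x_0 + \sum_{k=1}^m s_k \omega_{i_k}$. Its preimage $\Omega = \{s \in \mathbb{R}^m : x(s) \in \mathbb{R}^d_{>0}\}$ is an intersection of open half-spaces, hence an open convex subset of $\mathbb{R}^m$. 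On $\Omega$ define the smooth functions $G_k(s) = \log(f^+_{i_k}(x(s))/f^-_{i_k}(x(s)))$ and the 1-form $\alpha = \sum_{k=1}^m G_k(s)\,ds_k$.

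The crux is the closedness $d\alpha = 0$. By the chain rule, $\partial_l G_k(s) = \omega_{i_l}\cdot\nabla_x\log(f^+_{i_k}/f^-_{i_k})\bigl|_{x=x(s)}$; for $k \neq l$, first-order local detailed balance \eqref{cparallelogram} applied with $p = i_k$ and $q = i_l$ identifies this with $\omega_{i_k}\cdot\nabla_x\log(f^+_{i_l}/f^-_{i_l})\bigl|_{x=x(s)} = \partial_k G_l(s)$, while the case $k = l$ is trivial. Since $\Omega$ is convex and therefore simply connected, Poincar\'e's lemma yields $\tilde{U} \in C^\infty(\Omega)$ with $\partial_k\tilde{U} = -G_k$ for all $k$.

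To upgrade $\tilde{U}$ to a potential $U \in C^\infty(\mathbb{R}^d_{>0})$, I would define $U$ on a tubular neighborhood of the affine subspace inside $\mathbb{R}^d_{>0}$ by $U(x) = \tilde{U}(s(y))$, where $y = x_0 + P(x - x_0)$ with $P$ the orthogonal projector onto $\mathrm{span}(V(\mathcal{R}))$ and $s(y)$ the corresponding parameters, and then extend smoothly to all of $\mathbb{R}^d_{>0}$ using a bump function. Because $P$ annihilates $\mathrm{span}(V(\mathcal{R}))^\perp$, this $U$ satisfies $\nabla U(x) \in \mathrm{span}(V(\mathcal{R}))$ at every $x$ on the affine subspace; moreover, Theorem \ref{result2}(a) shows that $F(x)$ already lies in $\mathrm{span}(V(\mathcal{R}))$. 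For $x = x(s)$ on the affine subspace and any $\omega = \sum_k c_k \omega_{i_k}$, the chain rule and Theorem \ref{result2}(a) together give $\omega \cdot \nabla U(x) = \sum_k c_k\,\partial_k\tilde{U}(s) = -\sum_k c_k G_k(s) = -\omega \cdot F(x)$. Hence $F(x) + \nabla U(x)$ is an element of $\mathrm{span}(V(\mathcal{R}))$ that is orthogonal to every element of $\mathrm{span}(V(\mathcal{R}))$, so it vanishes, yielding \eqref{gradient}.

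The essential obstacle is the closed-form verification $\partial_l G_k = \partial_k G_l$, which is precisely what first-order local detailed balance is tailored to supply; the remaining ingredients (convexity of $\Omega$, Poincar\'e's lemma, and smooth extension through a tubular neighborhood combined with Theorem \ref{result2}(a)) are standard and carry no real difficulty.
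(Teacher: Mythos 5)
Your proposal is correct and follows essentially the same route as the paper's proof: parametrize the positive slice of $x_0+\mathrm{span}(V(\mathcal{R}))$ by the chosen basis, use first-order local detailed balance to show that the 1-form $\sum_{k}\log\bigl(f^+_{i_k}/f^-_{i_k}\bigr)\,\dnum s_k$ is closed, and invoke the Poincar\'e lemma on the convex parameter domain to obtain $\tilde{U}$. The only immaterial difference is the final extension step: the paper simply extends $\tilde{U}$ smoothly to $\Rnum^m$, sets $U(x)=\tilde{U}((x-x_0)M(M^TM)^{-1})$ on all of $\Rnum^d_{>0}$, and computes $\nabla U$ by the chain rule, whereas you compose with the orthogonal projection and patch with a bump function --- on the affine slice the two constructions coincide, so your orthogonality argument yields the same identity \eqref{gradient}.
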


\begin{proof}
	For any $x\in x_0+{\rm span}(V(\mathcal{R}))$, there exists $y = (y_1,\cdots,y_m)\in\mathbb{R}^m$ such that
	\begin{equation*}
	x = x_0+y_1\omega_{i_1}+\cdots+y_m\omega_{i_m} = x_0+yM^T.
	\end{equation*}
	Since $\{\omega_{i_1},\cdots,\omega_{i_m}\}$ is a basis of ${\rm span}(V(\mathcal{R}))$, this equation establishes a one-to-one correspondence between $x\in\mathbb{R}^d_{>0}\cap(x_0+{\rm span}(V(\mathcal{R})))$ and $y\in E$, where $E$ is some convex open subset of $\mathbb{R}^m$. We denote this correspondence by
	\begin{equation*}
	g:E\rightarrow\mathbb{R}^d_{>0}\cap(x_0+{\rm span}(V(\mathcal{R}))),\;\;\;g(y) = x_0+yM^T.
	\end{equation*}
	The inverse of this affine transformation is then given by
	\begin{equation*}
	g^{-1}:\mathbb{R}^d_{>0}\cap(x_0+{\rm span}(V(\mathcal{R})))\rightarrow E,\;\;\;g^{-1}(x) = (x-x_0)M(M^TM)^{-1}.
	\end{equation*}
	For convenience, we introduce a function $h:E\rightarrow\mathbb{R}^m$ as
	\begin{equation*}
	h(y) = \left(\log\frac{f^+_{i_1}(g(y))}{f^-_{i_1}(g(y))},\cdots,
	\log\frac{f^+_{i_m}(g(y))}{f^-_{i_m}(g(y))}\right).
	\end{equation*}
	It is then easy to check that
	\begin{equation*}
	\partial_lh_k(y)
	= \sum_{p=1}^d\partial_p\left[\log\frac{f^+_{i_k}(g(y))}{f^-_{i_k}(g(y))}\right]\omega_{i_l}^p
	= \omega_{i_l}\cdot\nabla\log\frac{f^+_{i_k}(g(y))}{f^-_{i_k}(g(y))}.
	\end{equation*}
	Since the system satisfies first-order local detailed balance, we immediately obtain
	\begin{equation}\label{symmetric}
	\partial_lh_k(y) = \partial_kh_l(y),\;\;\;1\leq k,l\leq m.
	\end{equation}
	To proceed, we construct a smooth differential 1-form on $E$ as
	\begin{equation*}
	\omega = \sum_{k=1}^mh_k(y){\rm d}y_k.
	\end{equation*}
	It then follows from \eqref{symmetric} that
	\begin{equation*}
	{\rm d}\omega = \sum_{k=1}^m\sum_{l=1}^m\partial_lh_k(y){\rm d}y_l\wedge {\rm d}y_k
	= \sum_{k<l}[\partial_kh_l(y)-\partial_lh_k(y)]{\rm d}y_k\wedge {\rm d}y_l = 0.
	\end{equation*}
	This shows that the differential form $\omega$ is closed. Since $E$ is a convex open subset of $\mathbb{R}^m$, it then follows from the Poincar\'{e} lemma \cite[Theorem 17.14]{lee2013introduction} that the $k$th de Rham cohomology of $E$ is vanishing for each $k\geq 1$, which means that $\omega$ is exact. In other words, there exists a function $\tilde{U}\in C^\infty(E)$ such that
	\begin{equation*}
	\omega = -{\rm d}\tilde{U} = -\sum_{k=1}^m\partial_k\tilde{U}(y){\rm d}y_k.
	\end{equation*}
	This clearly shows that $h = -\nabla\tilde{U}$. Since $\tilde{U}$ is a smooth function on the open set $E\subset\mathbb{R}^m$, we can extend it to a function $\tilde{U}\in C^\infty(\mathbb{R}^m)$. To proceed, we define the potential function $U\in C^\infty(\mathbb{R}^d_{>0})$ as
	\begin{equation*}
	U(x) = \tilde{U}((x-x_0)M(M^TM)^{-1}),\;\;\;x\in\mathbb{R}^d_{>0}.
	\end{equation*}
	For any $x\in\mathbb{R}^d_{>0}\cap(x_0+{\rm span}(V(\mathcal{R})))$, straightforward computations show that
	\begin{equation*}
	\begin{split}
	-\nabla U(x) &= -\nabla\tilde{U}(g^{-1}(x))(M^TM)^{-1}M^T = h(g^{-1}(x))(M^TM)^{-1}M^T\\
	&= \left(\log\frac{f^+_{i_1}(x)}{f^-_{i_1}(x)},\cdots,
	\log\frac{f^+_{i_m}(x)}{f^-_{i_m}(x)}\right)(M^TM)^{-1}M^T = F(x).
	\end{split}
	\end{equation*}
	This completes the proof.
\end{proof}

The following lemma is exactly Theorem \ref{result2}(c),(d).

\begin{lemma}\label{le4}
	Suppose that a reaction network satisfies local detailed balance. Then
	\begin{equation*}
	\frac{{\rm d}}{{\rm d}t}U(x(t))\leq 0,\;\;\;t\geq 0,
	\end{equation*}
	where $x = x(t)$ is the solution of the deterministic model \eqref{deterministic}, and the equality holds if and only if the deterministic model starts from any one of its equilibrium points. Moreover, the critical points of $U$ within $\mathbb{R}^d_{>0}\cap(x_0+\mathrm{span}(V(\mathcal{R})))$ are also the equilibrium points of the deterministic model.
\end{lemma}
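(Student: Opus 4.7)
The plan is to compute $\tfrac{\dnum}{\dnum t}U(x(t))$ directly via the chain rule, then invoke Lemma~\ref{le10} to rewrite the result in a form amenable to the classical inequality $(a-b)\log(a/b)\geq 0$ for $a,b>0$, with equality iff $a=b$. This elementary inequality is the workhorse of Lyapunov arguments for detailed balanced reaction networks and will do almost all of the work.

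First I would verify that $x(t)\in\Rnum^d_{>0}\cap(x_0+\mathrm{span}(V(\mathcal{R})))$ for all $t\geq 0$, so that the identity $\nabla U(x(t))=-F(x(t))$ from Lemma~\ref{le3} applies along the whole trajectory: affine-subspace invariance is immediate because $\dot{x}(t)$ is a linear combination of the $\omega_p$, and positivity is preserved by mass-action dynamics. The chain rule combined with the ODE \eqref{deterministic} then yields
\begin{equation*}
\frac{\dnum}{\dnum t}U(x(t))=-\sum_{p=1}^{r}[f^+_p(x(t))-f^-_p(x(t))]\,F(x(t))\cdot\omega_p.
\end{equation*}
Substituting $F(x)\cdot\omega_p=\log(f^+_p(x)/f^-_p(x))$ from Lemma~\ref{le10} turns each summand into $-(a-b)\log(a/b)\leq 0$, so the whole sum is nonpositive, with equality iff $f^+_p(x(t))=f^-_p(x(t))$ for every $p$. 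The latter condition is exactly $\dot{x}(t)=0$, so $x(t)$ is an equilibrium of \eqref{deterministic}; by autonomy of the ODE, $\dot{x}(t_0)=0$ forces $x(t)\equiv x(t_0)$, so equality along the trajectory is equivalent to starting at an equilibrium.

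For the critical point statement, suppose $x\in\Rnum^d_{>0}\cap(x_0+\mathrm{span}(V(\mathcal{R})))$ is a critical point of $U$ in the sense that $\nabla U(x)\cdot v=0$ for every $v\in\mathrm{span}(V(\mathcal{R}))$. By Lemma~\ref{le3} this means $F(x)\cdot v=0$ for every such $v$. The definition \eqref{vectorfield} shows that $F(x)$ itself lies in $\mathrm{span}(V(\mathcal{R}))$ (it is a linear combination of the basis vectors $\omega_{i_k}$), so choosing $v=F(x)$ forces $F(x)=0$. Applying Lemma~\ref{le10} once more gives $\log(f^+_p(x)/f^-_p(x))=0$ for every $p$, hence $f^+_p(x)=f^-_p(x)$, and $x$ is an equilibrium of \eqref{deterministic}.

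I do not anticipate a genuine obstacle here, since both parts reduce to straightforward consequences of Lemma~\ref{le10} together with the sign of $(a-b)\log(a/b)$. The only points requiring care are (i) a positivity/invariance argument to remain inside $\Rnum^d_{>0}\cap(x_0+\mathrm{span}(V(\mathcal{R})))$ so that the logarithms are defined and the identity $\nabla U=-F$ applies, and (ii) making explicit that $F(x)\in\mathrm{span}(V(\mathcal{R}))$, which is what bridges the subspace-restricted notion of critical point with the pointwise vanishing $F(x)=0$ used in the final step.
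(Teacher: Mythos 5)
Your proposal is correct and follows essentially the same route as the paper: chain rule along the flow, the identity $F(x)\cdot\omega_p=\log(f^+_p(x)/f^-_p(x))$ from Lemma~\ref{le10}, the sign of $(a-b)\log(a/b)$ for the dissipation inequality and its equality case, and Lemma~\ref{le10} again for the critical points. The only (harmless) difference is in the last step: the paper uses the full-gradient definition of a critical point, so $F(c)=-\nabla U(c)=0$ is immediate, whereas you work with the subspace-restricted condition $\nabla U(x)\cdot v=0$ for $v\in\mathrm{span}(V(\mathcal{R}))$ and bridge it via $F(x)\in\mathrm{span}(V(\mathcal{R}))$, which is a valid and in fact slightly more general argument.
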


\begin{proof}
	Let $x = x(t)$ be the solution of the deterministic model \eqref{deterministic}. It then follows from Lemma \ref{le10} that
	\begin{equation}\label{compare3}
	\begin{aligned}
	\frac{{\rm d}}{{\rm d}t}U(x(t)) &= \nabla U(x(t))\cdot\dot{x}(t)\\
	&= -\sum_{p=1}^r[f^+_p(x(t))-f_p^-(x(t))]F(x(t))\cdot\omega_p\\
	&= -\sum_{p=1}^r[f_p^+(x(t))-f^-_p(x(t))]\log\frac{f_p^+(x(t))}{f_p^-(x(t))}\leq 0,
	\end{aligned}
	\end{equation}
	where the equality holds for some $t\geq 0$ if and only if $f_p^+(x(t))=f^-_p(x(t))$ for all $1\leq p\leq r$, which implies that $x(t)$ is an equilibrium point of the deterministic model \eqref{deterministic}. If the deterministic model does not start from any one of its equilibrium points, then $x(t)$ is not an equilibrium point and thus the equality in \eqref{compare3} cannot be attained. Finally, if $c\in\mathbb{R}^d_{>0}\cap(x_0+\mathrm{span}(V(\mathcal{R})))$ is a critical point of $U$, then we have $F(c) = -\nabla U(c) = 0$. It then follows from \eqref{crucial} that $f^+_p(c) = f^-_p(c)$ for each $1\leq p\leq r$, which shows that $c$ is an equilibrium point of the deterministic model.
\end{proof}

The above lemma shows that the asymptotic stability of equilibrium points of the deterministic model \eqref{deterministic} can be analyzed with the aid of the potential function $U$, according to the classic Lyapunov stability criterion \cite[Chapter 30]{walter1998ordinary}.

\begin{lemma}\label{2}
	Let $\{\omega_{i_1},\cdots,\omega_{i_m}\}$ be an arbitrary basis of ${\rm span}(V(\mathcal{R}))$ and let $M = (\omega_{i_1}^T,\cdots,\omega^T_{i_m})$ be a $d\times m$ matrix. Then for any $x\in \mathbb{R}^d_{>0}$ and $y\in{\rm span}(V(\mathcal{R}))$, we have
	\begin{equation*}
	L(x,y) = \max_{\theta\in\mathbb{R}^m}\Big(\theta\cdot yA
	-\sum_{p=1}^{r}\left[f^+_p(x)\left(e^{\theta\cdot\omega_pA}-1\right)
	+f^-_p(x)\left(e^{-\theta\cdot\omega_pA}-1\right)\right]\Big),
	\end{equation*}
	where $A = M(M^TM)^{-1}$ and $L(x,y)$ is the Lagrangian defined in \eqref{localrate}. Furthermore, there exists a unique $\theta = \theta_0\in\mathbb{R}^m$ such that the maximum is attained.
\end{lemma}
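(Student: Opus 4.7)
The plan is to restrict the supremum defining $L(x,y)$ from $\Rnum^d$ to $\mathrm{span}(V(\mathcal{R}))$ and then re-parametrize that subspace by $\Rnum^m$ via the matrix $A$. I will then verify existence and uniqueness of the maximizer by concavity and coercivity.

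First, I would decompose $\Rnum^d=\mathrm{span}(V(\mathcal{R}))\oplus\mathrm{span}(V(\mathcal{R}))^\perp$ and write any $\tilde\theta\in\Rnum^d$ as $\tilde\theta=\tilde\theta_1+\tilde\theta_2$ with $\tilde\theta_1\in\mathrm{span}(V(\mathcal{R}))$ and $\tilde\theta_2\in\mathrm{span}(V(\mathcal{R}))^\perp$. Since $y\in\mathrm{span}(V(\mathcal{R}))$, we have $\tilde\theta\cdot y=\tilde\theta_1\cdot y$, and since each $\omega_p\in\mathrm{span}(V(\mathcal{R}))$, we have $\omega_p\cdot\tilde\theta=\omega_p\cdot\tilde\theta_1$ and hence $H(x,\tilde\theta)=H(x,\tilde\theta_1)$. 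The supremum in \eqref{localrate} therefore reduces to a supremum over $\tilde\theta\in\mathrm{span}(V(\mathcal{R}))$ without loss.

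Next, I would parametrize $\mathrm{span}(V(\mathcal{R}))$ by $\Rnum^m$ through the map $\theta\mapsto\tilde\theta:=\theta A^T$ with $A=M(M^TM)^{-1}$. Since $A$ has full column rank $m$ and its column space coincides with that of $M$, which is $\mathrm{span}(V(\mathcal{R}))$, this is a linear bijection. A direct matrix manipulation yields $\tilde\theta\cdot y=\theta\cdot(yA)$ and $\omega_p\cdot\tilde\theta=\theta\cdot(\omega_p A)$ for each $p$, so substituting into $\tilde\theta\cdot y-H(x,\tilde\theta)$ gives exactly the expression inside the maximum stated in the lemma, yielding the claimed identity.

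Finally, to show the supremum is attained at a unique $\theta_0\in\Rnum^m$, I would check that the objective $G(\theta)$ is strictly concave and coercive. The Hessian of $\tilde\theta\mapsto H(x,\tilde\theta)$ equals $\sum_{p=1}^r[f^+_p(x)e^{\omega_p\cdot\tilde\theta}+f^-_p(x)e^{-\omega_p\cdot\tilde\theta}]\omega_p^T\omega_p$, whose quadratic form at $v\in\Rnum^d$ is $\sum_p c_p(\omega_p\cdot v)^2$ with strictly positive coefficients $c_p$ whenever $x\in\Rnum^d_{>0}$. Because $\{\omega_1,\dots,\omega_r\}$ spans $\mathrm{span}(V(\mathcal{R}))$, this form is positive definite on that subspace, and pulling back through the bijection $\theta\mapsto\theta A^T$ shows that the Hessian of $-G$ is positive definite on $\Rnum^m$. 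Coercivity follows because as $|\theta|\to\infty$ the bijection forces $|\tilde\theta|\to\infty$, and the spanning property ensures some $|\omega_p\cdot\tilde\theta|$ grows linearly in $|\theta|$, so the corresponding exponential overwhelms the linear term $\theta\cdot yA$ and drives $G(\theta)\to-\infty$. Strict concavity plus coercivity and continuity deliver a unique maximizer. The main point requiring care is the bookkeeping in the change of variables (choosing the right pullback $\tilde\theta=\theta A^T$ rather than $\theta M^T$) and verifying positive definiteness of the Hessian on the correct subspace; everything else is standard convex analysis.
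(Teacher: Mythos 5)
Your proposal is correct and takes essentially the same approach as the paper: reduce the $d$-dimensional Legendre--Fenchel supremum to an $m$-dimensional one parametrized through $A=M(M^TM)^{-1}$ (the paper does this bookkeeping by augmenting $(M^TM)^{-1}M^T$ with a basis of ${\rm span}(V(\mathcal{R}))^\perp$ into an invertible matrix $B$, while you quotient out the orthogonal complement directly, which is equivalent), and then obtain existence and uniqueness of the maximizer from strict concavity of the objective (negative definite Hessian, using that the $\omega_p$ span the subspace and $f_p^\pm(x)>0$) together with coercivity. Your coercivity sketch via norm equivalence matches the paper's bound, which uses only the basis reactions where $\theta\cdot\omega_{i_p}A=\theta_p$.
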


\begin{proof}
	Let $\{v_1,\cdots,v_{d-m}\}$ be an arbitrary basis of ${\rm span}(V(\mathcal{R}))^\perp$ and let
	\begin{equation*}
	N = (v^T_1,\dots,v^T_{d-m})_{d\times(d-m)},\;\;\;
	B = \begin{pmatrix} (M^TM)^{-1}M^T \\ N^T \end{pmatrix}_{d\times d}
	\end{equation*}
	be two matrices. We next prove that the square matrix $B$ is invertible. To this end, we consider the system of linear equations $Bz^T = 0$. Since $Bz^T = 0$, we have $M^Tz^T=0$ and $N^Tz^T=0$. Since the columns of $M$ and $N$ constitute a basis of $\mathbb{R}^d$, we conclude that $z = 0$. Thus the system of linear equations $Bz^T = 0$ has only the zero solution, which implies that $B$ is invertible.
	
	For any $x\in\mathbb{R}^d_{>0}$ and $y\in{\rm span}(V(\mathcal{R}))$, we have
	\begin{equation}\label{substitution}
	\begin{aligned}
	L(x,y) &= \sup_{\theta\in \mathbb{R}^d}\Big(\theta\cdot y-\sum_{p=1}^{r}\left[
	f^+_p(x)(e^{\omega_p\cdot \theta}-1)+f^-_p(x)(e^{-\omega_p\cdot \theta}-1)\right]\Big)\\
	&=\sup_{\theta\in\mathbb{R}^d}\Big(\theta B\cdot y-\sum_{p=1}^{r}\left[
	f^+_p(x)(e^{\omega_p\cdot\theta B}-1)+f^-_p(x)(e^{-\omega_p\cdot\theta B}-1)\right]\Big)\\
	&=\sup_{\theta\in\mathbb{R}^d}\Big(\theta\cdot yB^T-\sum_{p=1}^{r}\left[
	f^+_p(x)(e^{\theta\cdot\omega_pB^T}-1)+f^-_p(x)(e^{-\theta\cdot\omega_pB^T}-1)\right]\Big),
	\end{aligned}
	\end{equation}
	where we have used the fact that $B$ is invertible. Since $y\in {\rm span}(V(\mathcal{R}))$, there exist $k_1,\cdots,k_m\in\mathbb{R}$ such that $y=k_1\omega_{i_1}+\dots+k_m\omega_{i_m}$. Since the columns of $M$ and the columns of $N$ are orthogonal, we have
	\begin{equation*}
	yB^T = (k_1,\dots,k_m)M^T(M(M^TM)^{-1},N) = (k_1,\dots,k_m,0,\dots,0).
	\end{equation*}
	Similarly, we can prove that the last $d-m$ components of $\omega_pB^T$ are $0$ for all $1\leq p \leq r$. Thus the supremum in the last equality of \eqref{substitution} can be taken over the first $m$ components of $\theta$, namely
	\begin{equation*}
	\begin{aligned}
	L(x,y) &=\sup_{\theta\in\mathbb{R}^m}G(x,y,\theta),
	\end{aligned}
	\end{equation*}
	where
	\begin{equation}\label{G}
	G(x,y,\theta) = \theta\cdot yA
	-\sum_{p=1}^{r}\left[f^+_p(x)\left(e^{\theta\cdot\omega_pA}-1\right)
	+f^-_p(x)\left(e^{-\theta\cdot\omega_pA}-1\right)\right],\;\;\;\theta\in\mathbb{R}^m,
	\end{equation}
	and $A = M(M^TM)^{-1}$. Direct computations show that the Hessian matrix of the function $G(x,y,\theta)$ with respect to $\theta$ is given by
	\begin{equation*}
	-A^T(\omega_{1}^T,\omega_{2}^T,\cdots,\omega^T_r)\begin{pmatrix}
	g_1(x,\theta) & 0 & \dots & 0 \\
	0 & g_2(x,\theta) & \dots & 0  \\
	& & \ddots & \\
	0 & 0 &\dots & g_r(x,\theta)
	\end{pmatrix}
	\begin{pmatrix}\omega_1 \\ \omega_2 \\ \vdots \\ \omega_r \end{pmatrix}A,
	\end{equation*}
	where
	\begin{equation*}
	g_p(x,\theta) = f^+_p(x)e^{\theta\cdot\omega_pA}+f^-_p(x)e^{-\theta\cdot\omega_p A},\;\;\;1\leq p\leq r.
	\end{equation*}
	Since $A^T(\omega_{i_1}^T,\dots,\omega^T_{i_m}) = (M^TM)^{-1}M^TM = I$ and $g_p(x,\theta)>0$ for any $x\in\mathbb{R}^d_{>0}$, it is easy to check that the Hessian matrix is negative definite. Therefore, $G(x,y,\theta)$ is a strictly concave function with respect to $\theta$ \cite[Corollary 3.8.6]{niculescu2006convex}. We next prove that
	\begin{equation}\label{infinity}
	\lim_{\|\theta\|\rightarrow\infty}\theta\cdot yA
	-\sum_{p=1}^{r}\left[f^+_p(x)\left(e^{\theta\cdot\omega_pA}-1\right)
	+f^-_p(x)\left(e^{-\theta\cdot\omega_pA}-1\right)\right] = -\infty.
	\end{equation}
	Since $yA = (k_1,\cdots,k_m)$, for any $\theta = (\theta_1,\cdots,\theta_m)$, we have
	\begin{equation*}
	\begin{aligned}
	&\;\theta\cdot yA
	-\sum_{p=1}^{r}\left[f^+_p(x)\left(e^{\theta\cdot\omega_pA}-1\right)
	+f^-_p(x)\left(e^{-\theta\cdot\omega_pA}-1\right)\right]\\
	\leq&\; \sum_{p=1}^mk_p\theta_p-
	\sum_{p=1}^m\left[f^+_{i_p}(x)e^{\theta\cdot\omega_{i_p}A}+f^-_{i_p}(x)e^{-\theta\cdot\omega_{i_p} A}\right]+\sum_{p=1}^{r}\left[f^+_p(x)+f_p^-(x)\right]\\
	=&\; \sum_{p=1}^mk_p\theta_p-
	\sum_{p=1}^m\left[f^+_{i_p}(x)e^{\theta_p}+f^-_{i_p}(x)e^{-\theta_p}\right]+\sum_{p=1}^r\left[f^+_p(x)+f_p^-(x)\right]\\
	=&\; \sum_{p=1}^m\left[k_p\theta_p-\left(f^+_{i_p}(x)e^{\theta_p}+f^-_{i_p}(x)e^{-\theta_p}\right)\right]
	+\sum_{p=1}^r\left[f^+_p(x)+f_p^-(x)\right].
	\end{aligned}
	\end{equation*}
	Therefore, \eqref{infinity} follows directly from the fact that exponential functions grow much faster than linear functions. Since $G(x,y,\theta)$ is a strictly concave function with respect to $\theta$ and since \eqref{infinity} holds, it follows that $G(x,y,\theta)$ must attain its maximum at a unique $\theta = \theta_0\in\mathbb{R}^m$ \cite[Chapter B, Theorem 4.1.1]{hiriart2012fundamentals}.
\end{proof}

For any absolutely continuous trajectory $\phi:[0,T]\rightarrow \mathbb{R}^d_{>0}$ satisfying
\begin{equation*}
\phi(0)=x_0,\;\;\;\phi(T) = y,\;\;\;\phi(\cdot)\in x_0+\mathrm{span}(V(\mathcal{R})),
\end{equation*}
we define
\begin{equation*}
S(\phi) = \int_0^TF(\phi(t))\cdot\dot{\phi}(t){\rm d}t
\end{equation*}
to be the line integral of the vector field $F$ along the trajectory $\phi$. If the system satisfies local detailed balance, then the vector field $F$ has a potential function $U$. In this case, $S(\phi)$ can be represented by the potential function $U$ as
\begin{equation*}
S(\phi) = -\int_0^T\nabla U(\phi(t))\cdot\dot{\phi}(t){\rm d}t = U(x_0)-U(y),
\end{equation*}
which only depends on the endpoints of the trajectory $\phi$ and thus is ``path-independent". Moreover, we define the {\em reversed trajectory} of $\phi$ as
\begin{equation*}
\phi^-(t)=\phi(T-t),\;\;\;0\leq t\leq T.
\end{equation*}

\begin{lemma}\label{substract}
	Suppose that a reaction network satisfies local detailed balance. Then for any absolutely continuous trajectory $\phi:[0,T]\rightarrow \mathbb{R}^d_{>0}$ satisfying
	\begin{equation*}
	\phi(0)=x_0,\;\;\;\phi(T)=y,\;\;\;\phi(\cdot)\in x_0+\mathrm{span}(V(\mathcal{R})),
	\end{equation*}
	we have
	\begin{equation}\label{eq7}
	I_{y,T}(\phi^-)-I_{x_0,T}(\phi) = U(x_0)-U(y),
	\end{equation}
	where $\phi^-$ is the reversed trajectory of $\phi$.
\end{lemma}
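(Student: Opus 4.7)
The plan is to establish a symmetry of the Hamiltonian under local detailed balance, translate it into a pointwise identity for the Lagrangian, and then integrate along the trajectory, using $F=-\nabla U$ to collapse the integral into a potential difference.

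First I would prove the key symmetry
\begin{equation*}
H(x,\theta)=H(x,-F(x)-\theta),\;\;\;x\in\Rnum^d_{>0},\;\theta\in\Rnum^d.
\end{equation*}
By Lemma \ref{le10}, local detailed balance gives $\omega_p\cdot F(x)=\log[f^+_p(x)/f^-_p(x)]$, equivalently $f^+_p(x)e^{-\omega_p\cdot F(x)}=f^-_p(x)$ and $f^-_p(x)e^{\omega_p\cdot F(x)}=f^+_p(x)$. Substituting $\theta\mapsto -F(x)-\theta$ in the definition \eqref{hamiltonian} of $H$ then swaps the $f^+_p$ and $f^-_p$ terms in each pair, leaving the sum unchanged.

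Second, I would translate the symmetry into a Lagrangian identity: for any $y\in\mathrm{span}(V(\mathcal{R}))$,
\begin{equation*}
L(x,-y)-L(x,y)=F(x)\cdot y.
\end{equation*}
Starting from \eqref{localrate} and the change of variables $\theta=-F(x)-\theta'$, one gets $L(x,y)=\sup_{\theta'}\{(-F(x)-\theta')\cdot y-H(x,\theta')\}=-F(x)\cdot y+L(x,-y)$, where the supremum is finite because $-y\in\mathrm{span}(V(\mathcal{R}))$.

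Third, using absolute continuity of $\phi$ with $\phi(\cdot)\in x_0+\mathrm{span}(V(\mathcal{R}))$ (so $\dot\phi\in\mathrm{span}(V(\mathcal{R}))$ a.e.), the substitution $s=T-t$ in $I_{y,T}(\phi^-)=\int_0^T L(\phi(T-t),-\dot\phi(T-t))\,dt$ yields
\begin{equation*}
I_{y,T}(\phi^-)-I_{x_0,T}(\phi)=\int_0^T\bigl[L(\phi(s),-\dot\phi(s))-L(\phi(s),\dot\phi(s))\bigr]\,ds=\int_0^T F(\phi(s))\cdot\dot\phi(s)\,ds.
\end{equation*}
By Lemma \ref{le3}, $F(\phi(s))=-\nabla U(\phi(s))$ along the curve, so the right-hand side equals $-[U(\phi(T))-U(\phi(0))]=U(x_0)-U(y)$, giving \eqref{eq7}. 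If either action is infinite (i.e. $\phi$ is not absolutely continuous or strays outside $x_0+\mathrm{span}(V(\mathcal{R}))$), the hypothesis excludes that case, so the identity makes sense throughout.

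The main obstacle is the clean verification of the Hamiltonian symmetry and ensuring the change of variables in the Legendre-Fenchel transform is legitimate; this is the place where local detailed balance (via Lemma \ref{le10}) is actually invoked. Once the pointwise Lagrangian relation $L(x,-y)-L(x,y)=F(x)\cdot y$ is in hand, the rest is a routine time-reversal substitution followed by the fundamental theorem of calculus for the potential $U$.
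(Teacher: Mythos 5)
Your proposal is correct, but it proves the key pointwise identity by a genuinely different route than the paper. The paper works with the reduced variational representation of Lemma \ref{2}: it invokes existence and uniqueness of the maximizers $\theta_1(t)$ and $\theta_2(t)$ for $L(\phi(t),\dot{\phi}(t))$ and $L(\phi(t),-\dot{\phi}(t))$ over $\Rnum^m$, uses the first-order conditions together with \eqref{crucial} to show $\theta_1(t)+\theta_2(t)=-f(\phi(t))$, and then computes the difference of the two Lagrangians directly. You instead prove the Hamiltonian symmetry $H(x,\theta)=H(x,-F(x)-\theta)$, which follows immediately from \eqref{crucial} (i.e.\ from Lemma \ref{le10}, so only zero-order local detailed balance is needed at this stage), and obtain $L(x,-y)-L(x,y)=F(x)\cdot y$ by the affine change of variables $\theta\mapsto -F(x)-\theta$ in the Legendre--Fenchel transform \eqref{localrate}; this bypasses Lemma \ref{2} entirely, needing neither the dimension reduction to $\Rnum^m$ nor the attainment and uniqueness of the maximizer, and it makes transparent that the Lagrangian identity is exactly the classical ``detailed balance = time-reversal symmetry of the Hamiltonian'' statement. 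The only caveat is your parenthetical claim that the supremum is finite for velocities in $\mathrm{span}(V(\mathcal{R}))$: you assert it rather than prove it (in the paper this is precisely what Lemma \ref{2} delivers via coercivity and strict concavity), but for your argument it is not essential, since the identity in the form $L(x,-y)=L(x,y)+F(x)\cdot y$ holds in $[0,\infty]$ and $\int_0^T F(\phi(t))\cdot\dot{\phi}(t)\,\mathrm{d}t$ is finite because $U\in C^\infty(\Rnum^d_{>0})$, $\phi([0,T])$ is compact in $\Rnum^d_{>0}$, and $\dot{\phi}$ is integrable. The remaining steps (time reversal, $F=-\nabla U$ from Lemma \ref{le3}, and the fundamental theorem of calculus for $U\circ\phi$) coincide with the paper's.
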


\begin{proof}[Proof of Lemma \ref{substract}]
	Since $\phi(\cdot)\in x_0+\mathrm{span}(V(\mathcal{R}))$, we have $\dot\phi(\cdot)\in\mathrm{span}(V(\mathcal{R}))$. It then follows from Lemma \ref{2} that for each $0\leq t\leq T$, there exists a unique $\theta = \theta_1(t)\in\mathbb{R}^m$ such that the following maximum is attained:
	\begin{equation}\label{temp1}
	L(\phi(t),\dot{\phi}(t))
	= \max_{\theta\in\mathbb{R}^m}G(\phi(t),\dot\phi(t),\theta),
	\end{equation}
	where $G(x,y,\theta)$ is the function defined in \eqref{G}. Using Lemma \ref{2} again shows that for each $0\leq t\leq T$, there exists a unique $\theta = \theta_2(t)\in\mathbb{R}^m$ such that the following maximum is attained:
	\begin{equation}\label{temp2}
	L(\phi(t),-\dot{\phi}(t))
	= \max_{\theta\in\mathbb{R}^m}G(\phi(t),-\dot\phi(t),\theta).
	\end{equation}
	Since the maximum in \eqref{temp1} is attained at $\theta = \theta_1(t)$, taking the derivatives of $G(\phi(t),\dot\phi(t),\theta)$ with respect to $\theta$ and evaluating at $\theta = \theta_1(t)$ yields
	\begin{equation*}
	\dot{\phi}(t)A = \sum_{p=1}^r\left(f^+_p(\phi(t))e^{\theta_1(t)\cdot\omega_pA}
	-f^-_p(\phi(t))e^{-\theta_1(t)\cdot\omega_pA}\right)\omega_pA,
	\end{equation*}
	where $A = M(M^TM)^{-1}$. By the proof of Lemma \ref{2}, $G(\phi(t),\dot\phi(t),\theta)$ is a strictly concave function with respect to $\theta$. This shows that $\theta = \theta_1(t)$ is the unique solution of the following equation
	\begin{equation}\label{important}
	\dot{\phi}(t)A = \sum_{p=1}^r\left(f^+_p(\phi(t))e^{\theta\cdot\omega_pA}
	-f^-_p(\phi(t))e^{-\theta\cdot\omega_pA}\right)\omega_pA.
	\end{equation}
	Similarly, since the maximum in \eqref{temp2} is attained at $\theta = \theta_2(t)$, we obtain
	\begin{equation}\label{second}
	\dot{\phi}(t)A = -\sum_{p=1}^r\left(f^+_p(\phi(t))e^{\theta_2(t)\cdot\omega_pA}
	-f^-_p(\phi(t))e^{-\theta_2(t)\cdot\omega_pA}\right)\omega_pA.
	\end{equation}
	To proceed, let
	\begin{gather*}
	f(x) = \left(\log\frac{f^+_{i_1}(x)}{f^-_{i_1}(x)},\cdots,\log\frac{f_{i_m}^+(x)}{f^-_{i_m}(x)}\right),
	\;\;\;x\in\mathbb{R}^d_{>0}.
	\end{gather*}
	It then follows from \eqref{crucial} that
	\begin{equation*}
	f(x)\cdot\omega_{p}A = F(x)\cdot\omega_{p} = \log\frac{f^+_p(x)}{f^-_p(x)}.
	\end{equation*}
	This clearly shows that
	\begin{equation}\label{relation}
	f^+_p(\phi(t)) = f^-_p(\phi(t))e^{f(\phi(t))\cdot\omega_pA},\;\;\;
	f^-_p(\phi(t)) = f^+_p(\phi(t))e^{-f(\phi(t))\cdot\omega_pA},
	\end{equation}
	which implies that
	\begin{equation*}
	\begin{split}
	&\;f^-_p(\phi(t))e^{-\theta_2(t)\cdot\omega_pA}-f^+_p(\phi(t))e^{\theta_2(t)\cdot\omega_pA}\\
	=&\;f^+_p(\phi(t))e^{-[\theta_2(t)+f(\phi(t))]\cdot\omega_pA}
	-f^-_p(\phi(t))e^{[\theta_2(t)+f(\phi(t))]\cdot\omega_pA}.
	\end{split}
	\end{equation*}
	Substituting this equation into \eqref{second}, it is easy to check that $\theta = -\theta_2(t)-f(\phi(t))$ is also a solution of the equation \eqref{important}. By the uniqueness of the solution of the equation \eqref{important}, we immediately obtain
	\begin{equation*}
	\theta_1(t)+\theta_2(t) = -f(\phi(t)).
	\end{equation*}
	It thus follows from \eqref{temp1}, \eqref{temp2}, and \eqref{relation} that
	\begin{equation*}
	\begin{split}
	&\; L(\phi(t),-\dot{\phi}(t))-L(\phi(t),\dot{\phi}(t))\\
	=&\; G(\phi(t),-\dot\phi(t),\theta_2(t))-G(\phi(t),\dot\phi(t),\theta_1(t))\\
	=&\; -(\theta_1(t)+\theta_2(t))\cdot\dot{\phi}(t)A\\
	&\; +\sum_{p=1}^r \left[f^+_p(\phi(t))e^{(\theta_1(t)+\theta_2(t))\cdot \omega_pA}-f^-_p(\phi(t))\right]\left(e^{-\theta_2(t)\cdot \omega_pA}-e^{-\theta_1(t)\cdot \omega_pA}\right)\\
	=&\; f(\phi(t))\cdot\dot{\phi}(t)A = F(\phi(t))\cdot\dot{\phi}(t).
	\end{split}
	\end{equation*}
	For the reversed trajectory $\phi^-$, note that
	\begin{equation*}
	L(\phi^-(T-t),\dot\phi^-(T-t)) = L(\phi(t),-\dot{\phi}(t)).
	\end{equation*}
	Finally, we obtain
	\begin{equation*}
	\begin{aligned}
	I_{y,T}(\phi^-)-I_{x_0,T}(\phi)
	&= \int_0^T\left[(L(\phi^-(T-t),\dot\phi^-(T-t))-L(\phi(t),\dot{\phi}(t))\right]{\rm d} t\\
	&= \int_0^T\left[L(\phi(t),-\dot{\phi}(t))-L(\phi(t),\dot{\phi}(t))\right]{\rm d}t\\
	&= \int_0^TF(\phi(t))\cdot\dot{\phi}(t){\rm d}t = S(\phi) = U(x_0)-U(y).
	\end{aligned}
	\end{equation*}
	This completes the proof.
\end{proof}

The following lemma is exactly Theorem \ref{result2}(e).

\begin{lemma}\label{le2}
	Suppose that a reaction network satisfies local detailed balance. Let $c\in\mathbb{R}^d_{>0}\cap(x_0+\mathrm{span}(V(\mathcal{R})))$ be an equilibrium point of the deterministic model \eqref{deterministic}. If $y\in\mathbb{R}^d_{>0}$ is attracted to $c$ for the deterministic model \eqref{deterministic}, then
	\begin{equation*}
	W(c,y) = U(y)-U(c).
	\end{equation*}
\end{lemma}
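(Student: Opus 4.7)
The plan is to establish two inequalities, $W(c,y)\leq U(y)-U(c)$ and $W(c,y)\geq U(y)-U(c)$, both by exploiting Lemma \ref{substract}, which says that for any absolutely continuous interior trajectory $\phi$ the difference $I(\phi^-)-I(\phi)$ equals the potential drop between the endpoints. Since the equilibrium $c\in\Rnum^d_{>0}$ and the point $y$ both lie on the affine subspace $c+\mathrm{span}(V(\mathcal{R}))$ (because $y$ is attracted to $c$ by the deterministic dynamics, which evolves on this subspace), all paths of finite cost between them stay on this subspace, and $U$ is well defined at both endpoints.

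For the upper bound, I would take the deterministic trajectory $\psi:[0,\infty)\to\Rnum^d_{>0}$ with $\psi(0)=y$, so that $\psi(T)\to c$ as $T\to\infty$ and $I_{y,T}(\psi|_{[0,T]})=0$. Applied to $\psi|_{[0,T]}$, Lemma \ref{substract} yields $I_{\psi(T),T}(\psi^-)=U(y)-U(\psi(T))$, where $\psi^-$ runs from $\psi(T)$ to $y$. Since $c$ and $\psi(T)$ both lie in the convex set $\Rnum^d_{>0}\cap(c+\mathrm{span}(V(\mathcal{R})))$, I prepend a short linear segment from $c$ to $\psi(T)$ traversed over a fixed small time interval; its Lagrangian cost tends to $0$ as $T\to\infty$ because $L$ is continuous on $\Rnum^d_{>0}\times\Rnum^d$ and the velocity along the segment vanishes. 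Concatenating and using continuity of $U$ gives $W(c,y)\leq U(y)-U(c)$.

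For the lower bound, let $\phi:[0,T]\to\Rnum^d_{\geq 0}$ be any absolutely continuous path with $\phi(0)=c$, $\phi(T)=y$ and $I_{c,T}(\phi)<\infty$; the finiteness forces $\dot\phi(t)\in\mathrm{span}(V(\mathcal{R}))$ for a.e. $t$, so $\phi([0,T])\subset c+\mathrm{span}(V(\mathcal{R}))$. If $\phi$ stays in $\Rnum^d_{>0}$, Lemma \ref{substract} gives
\begin{equation*}
I_{c,T}(\phi)=I_{y,T}(\phi^-)+U(y)-U(c)\geq U(y)-U(c),
\end{equation*}
since $I_{y,T}(\phi^-)\geq 0$. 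For boundary-touching trajectories I would regularize by $\phi_\varepsilon(t)=(1-\varepsilon)\phi(t)+\varepsilon c$, which lies in $\Rnum^d_{>0}\cap(c+\mathrm{span}(V(\mathcal{R})))$ by convexity, apply the interior bound to each $\phi_\varepsilon$, and pass to the limit using continuity of $U$ and lower semicontinuity (or a dominated-convergence argument based on continuity of $L$ on compact subsets of $\Rnum^d_{>0}\times\Rnum^d$) to recover $I_{c,T}(\phi)\geq U(y)-U(c)$. Taking the infimum over $(\phi,T)$ finishes the bound.

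The main obstacle I anticipate is the boundary analysis in the lower bound: $F$ and $U$ are constructed only on $\Rnum^d_{>0}$, and the proof of Lemma \ref{substract} used local detailed balance pointwise in the interior, so any rigorous reduction must control the cost of the regularized paths $\phi_\varepsilon$ as $\varepsilon\downarrow 0$. Verifying $\limsup_{\varepsilon\downarrow 0}I_{c,T}(\phi_\varepsilon)\leq I_{c,T}(\phi)$, or alternatively showing directly that finite-cost paths can be assumed to remain in $\Rnum^d_{>0}$ (perhaps by exploiting that the minimizers of the variational problem are connected to interior trajectories via the Euler--Lagrange flow), is the delicate step; the upper bound and the interior case of the lower bound are essentially routine consequences of Lemma \ref{substract}.
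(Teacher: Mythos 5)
Your proposal follows essentially the same route as the paper's proof: both inequalities are derived from Lemma \ref{substract}, the lower bound by applying it to an arbitrary interior path from $c$ to $y$ and discarding the nonnegative term $I_{y,T}(\phi^-)$, the upper bound by reversing the deterministic trajectory $\phi_y$ (which has zero cost) and gluing on a short straight segment from $c$ to $\phi_y(T)$. Two differences in the details are worth flagging. For the connecting segment, the paper runs a unit-speed line over a time interval of length $\|\phi_y(T)-c\|\leq\epsilon$ and bounds its cost by $(2rC_1+C_2)\epsilon$ via the explicit estimate $L(x,y)\leq\sum_{p=1}^r[f_p^+(x)+f_p^-(x)]+f^\star(y)$; you instead fix the duration and let the velocity vanish, and your stated justification (that the cost vanishes because ``$L$ is continuous and the velocity along the segment vanishes'') is not sufficient on its own, since $L(x,0)>0$ at non-equilibrium points $x$. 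The argument is rescued because the segment shrinks to the equilibrium $c$, where $L(c,0)=0$: indeed $\theta=0$ minimizes the convex function $H(c,\cdot)$ because $\nabla_\theta H(c,0)=\sum_{p}[f_p^+(c)-f_p^-(c)]\omega_p=0$ (and under local detailed balance one even has $f_p^+(c)=f_p^-(c)$ for every $p$, by the argument of Lemma \ref{le4} applied to the constant trajectory at $c$); adding this observation, together with continuity of $L$ near $(c,0)$ along the stoichiometric subspace, or simply adopting the paper's explicit bound, closes the point. For the lower bound, your interior case is exactly the paper's argument; your extra regularization $\phi_\varepsilon=(1-\varepsilon)\phi+\varepsilon c$ for boundary-touching paths addresses an issue the paper passes over silently (it takes the infimum only over trajectories with values in $\Rnum^d_{>0}$), and, as you note, the missing estimate $\limsup_{\varepsilon\downarrow0}I_{c,T}(\phi_\varepsilon)\leq I_{c,T}(\phi)$ is genuinely delicate near $\partial\Rnum^d_{\geq0}$; it is not needed to match the paper's own level of rigor, but it is the correct thing to worry about if one insists on taking the infimum in \eqref{quasipotential} over all paths in $\Rnum^d_{\geq0}$.
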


\begin{proof}
	Let $\phi:[0,T]\rightarrow \mathbb{R}^d_{>0}$ be an arbitrary absolutely continuous trajectory satisfying
	\begin{equation*}
	\phi(0)=c,\;\;\;\phi(T)=y,\;\;\;\phi(\cdot)\in x_0+\mathrm{span}(V(\mathcal{R})).
	\end{equation*}
	It thus follows from Lemma \ref{substract} that
	\begin{equation*}
	I_{c,T}(\phi) = U(y)-U(c)+I_{y,T}(\phi^-)\geq U(y)-U(c),
	\end{equation*}
	where $\phi^-$ is the reversed trajectory of $\phi$. Taking the infimum over $\phi$ on both sides of this equation yields
	\begin{equation}\label{geq}
	W(c,y) \geq U(y)-U(c).
	\end{equation}
	On the other hand, let $\phi_{y}(t)$ denote the trajectory of the deterministic model \eqref{deterministic} starting from $y$. In addition, let
	\begin{equation*}
	\psi_T(t)=\phi_{y}(T-t),\;\;\;0\leq t\leq T
	\end{equation*}
	denote the reversed trajectory of $\phi_{y}$ over the interval $[0,T]$. Applying Lemma \ref{substract} again shows that
	\begin{equation*}
	I_{\phi_{y}(T),T}(\psi_T) = U(y)-U(\phi_y(T))+I_{y,T}(\phi_{y}) = U(y)-U(\phi_y(T)).
	\end{equation*}
	Moreover, let
	\begin{equation*}
	\zeta_T(t) = c+\frac{(\phi_{y}(T)-c)t}{\|\phi_{y}(T)-c\|},\;\;\;0\leq t\leq\|\phi_{y}(T)-c\|,
	\end{equation*}
	be an absolutely continuous trajectory from $c$ to $\phi_{y}(T)$. Recall that for any $y\in{\rm span}(V(\mathcal{R}))$, we have
	\begin{equation*}
	\begin{aligned}
	L(x,y) =& \sup_{\theta\in \mathbb{R}^m}\Big(\theta\cdot yA-\sum_{p=1}^{r}
	\left[f^+_p(x)(e^{\theta\cdot\omega_pA}-1)+f^-_p(x)(e^{-\theta\cdot\omega_pA}-1)\right]\Big)\\
	=&\sup_{\theta\in \mathbb{R}^m}\Big(\theta\cdot yA-\sum_{p=1}^{r}
	\left[f^+_p(x)e^{\theta\cdot\omega_pA}+f^-_p(x)e^{-\theta\cdot \omega_pA}\right]\Big)
	+\sum_{p=1}^r\left[f_p^+(x)+f_p^-(x)\right],
	\end{aligned}
	\end{equation*}
	where $A = M(M^TM)^{-1}$. Since $y$ is attracted to $c$, for any $\epsilon>0$, we have $\|\phi_y(T)-c\|\leq \epsilon$ when $T$ is sufficiently large. For convenience, set
	\begin{equation*}
	\begin{aligned}
	C_0&=\min_{\|x-c\|\leq \epsilon}\{f_1^+(x),f_1^-(x),\dots,f_r^+(x),f_r^-(x)\},\\
	C_1&=\max_{\|x-c\|\leq \epsilon}\{f_1^+(x),f_1^-(x),\dots,f_r^+(x),f_r^-(x)\}.
	\end{aligned}
	\end{equation*}
	For any $\theta = (\theta_1,\cdots,\theta_m)\in\mathbb{R}^m$, whenever $\|x-c\|\leq \epsilon$, we have
	\begin{equation*}
	\begin{aligned}
	\sum_{p=1}^{r}\left(f^+_p(x)e^{\theta\cdot\omega_pA}+f^-_p (x)e^{-\theta\cdot \omega_pA}\right)
	&\geq C_0\sum_{p=1}^r\left(e^{\theta\cdot \omega_pA}+e^{-\theta\cdot \omega_pA}\right)\\
	&\geq C_0\sum_{p=1}^m\left(e^{\theta\cdot\omega_{i_p}A}+e^{-\theta\cdot \omega_{i_p}A}\right)
	= C_0\sum_{p=1}^m\left(e^{\theta_p}+e^{-\theta_p}\right).
	\end{aligned}
	\end{equation*}
	Thus for any $y = k_1\omega_{i_1}+\dots+k_m\omega_{i_m}\in\mathrm{span}(V(\mathcal{R}))$, whenever $\|x-c\|\leq \epsilon$, we have
	\begin{equation}\label{legendre}
	\begin{aligned}
	L(x,y)-\sum_{p=1}^r\left[f_p^+(x)+f_p^-(x)\right] =&\;\sup_{\theta\in \mathbb{R}^m}\Big(\theta\cdot yA-\sum_{p=1}^{r}
	\left[f^+_p (x)e^{\theta\cdot\omega_pA}+f^-_p (x)e^{-\theta\cdot\omega_pA}\right]\Big)\\
	\leq&\; \sup_{\theta\in \mathbb{R}^m}\Big(\sum_{p=1}^m k_p\theta_p-C_0\sum_{p=1}^m\left(e^{\theta_p}+e^{-\theta_p}\right)\Big)\\
	=&\; \sum_{p=1}^m \sup_{\theta_p\in \mathbb{R}}
	\left(k_p\theta_p-C_0\left(e^{\theta_p}+e^{-\theta_p}\right)\right)\\
	=&\; \sum_{p=1}^mk_p\theta_p^\star
	-C_0\left(e^{\theta_p^\star}+e^{-\theta^\star_p}\right) := f^\star(y),
	\end{aligned}
	\end{equation}
	where
	\begin{equation*}
	\theta^\star_p=\log\frac{k_p+\sqrt{k_p^2+4C_0^2}}{2C_0},\;\;\;1\leq p\leq m.
	\end{equation*}
	It is easy to see that the function $f^\star$ is continuous on $\mathrm{span}(V(\mathcal{R}))$. Since $\|\dot{\zeta_T}(t)\|\equiv 1$ and $f^\star$ is bounded on compact sets, there exists a constant $C_2>0$ such that for any $T\geq 0$ and $0\leq t\leq\|\phi_{y}(T)-c\|$,
	\begin{equation*}
	|f^\star(\dot{\zeta_T}(t))|\leq C_2.
	\end{equation*}
	Thus when $T$ is sufficiently large, it follows from \eqref{legendre} that for any $0\leq t\leq\|\phi_{y}(T)-c\|$,
	\begin{equation*}
	L(\zeta_T(t),\dot{\zeta_T}(t)) \leq \sum_{p=1}^r\left[f_p^+(\zeta_T(t))+f_p^-(\zeta_T(t))\right]+ f^\star(\dot{\zeta_T}(t)) \leq 2rC_1+C_2,
	\end{equation*}
	where we have used the fact that $\|\zeta_T(t)-c\|\leq\epsilon$ when $T$ is sufficiently large. Finally, we obtain
	\begin{equation*}
	I_{c,\|\phi_{y}(T)-c\|}(\zeta_T)
	=\int_0^{\|\phi_{y}(T)-c\|}L(\zeta_T(t),\dot{\zeta_T}(t)){\rm d}t
	\leq (2rC_1+C_2)\epsilon.
	\end{equation*}
	Combining $\zeta_T$ and $\psi_T$, we obtain an absolutely continuous trajectory from $c$ to $y$. Therefore, we have
	\begin{equation*}
	W(c,y)\leq I_{c,\|\phi_{y}(T)-c\|}(\zeta_T)+I_{\phi_{y}(T),T}(\psi_T)
	\leq U(y)-U(\phi_{y}(T))+(2rC_1+C_2)\epsilon.
	\end{equation*}
	Since $y$ is attracted to $c$, taking $T\rightarrow \infty$ in the above equation yields
	\begin{equation}\label{leq}
	W(c,y)\leq U(y)-U(c),
	\end{equation}
	where we have used the arbitrariness of $\epsilon$. Finally, the desired result follows from \eqref{geq} and \eqref{leq}.
\end{proof}

\section*{Acknowledgement}
We thank Prof. Vadim A. Malyshev and Sergey A. Pirogov for stimulating discussions via email, and thank Prof. Hao Ge and Dr. Xiao Jin for providing some useful references. We are also grateful to the anonymous referees for their valuable comments and suggestions which helped us greatly in improving the quality of this paper. C. J. acknowledges support from the NSAF grant in National Natural Science Foundation of China with grant No. U1930402. D.-Q. Jiang is supported by National Natural Science Foundation of China with grant No. 11871079.

\setlength{\bibsep}{5pt}
\small\bibliographystyle{nature}

\end{document}